\documentclass{amsart}
\usepackage{dsfont}
\usepackage{mathrsfs}
\usepackage{color}
\usepackage{amssymb,mathrsfs,amsmath,amsthm,color,bm,mathtools,bbm,wasysym,subfig,float}
\usepackage{CJK}
\usepackage{amssymb,bm}
\usepackage[noadjust]{cite}
\usepackage[centering]{geometry}
\geometry{a4paper,text={6in,9in}}
\parskip.5ex
\linespread{1.1}
\allowdisplaybreaks
\newtheorem{theorem}{Theorem}[section]
\newtheorem{lemma}[theorem]{Lemma}
\theoremstyle{definition}

\newtheorem{proposition}{Proposition}[section]

\theoremstyle{remark}

\newtheorem{remark}{Remark}[section]

\numberwithin{equation}{section}

%    Absolute value notation

%    Blank box placeholder for figures (to avoid requiring any
%    particular graphics capabilities for printing this document).

\makeatletter

\newcommand{\Rmnum}[1]{\expandafter\@slowromancap\romannumeral #1@}
\makeatother

\begin{document}

\title{Metric Theory for Continued Fractions with Multiple Large Partial Quotients}

%    Information for first author

\author{Qian Xiao*}
\address{School of Mathematics and Statistics, Southwest University, Chongqing, 400715, P.R. China}
\email{xiaoqianmath@163.com}

%    Information for second author
%\author{Dongkui Ma}
\thanks{* Corresponding author}
%\address{School of Mathematics, South China University of Technology, Guangzhou 510641, P.R. China}
% \email{dkma@scut.edu.cn}

%    General info
\subjclass[2010]{11K55, 28A80, 11J83}

%\date{January 1, 2001 and, in revised form, June 22, 2001.}

%\dedicatory{This paper is dedicated to our advisors.}

\keywords{continued fraction, Borel--Bernstein theorem, partial quotient, Hausdorff dimension.}

\begin{abstract}
The presence of large partial quotients can invalidate many classical limit theorems in the metric theory of continued fractions. A commonly employed strategy to overcome this problem is to discard the largest partial quotient when formulating variant forms of such theorems.
However, this method will fail when dealing with at least two large partial quotients.
Motivated by recent work of Tan, Tian, and Wang [Sci. China Math., 2023], we investigate the metric theory of real numbers that contain at least $r$ large partial quotients among the first $n$ terms of their continued fraction expansions.
Specifically, let $[a_1(x), a_2(x), \ldots ]$ be the continued fraction expansion of a real number $x \in [0, 1)$.
We determine the Lebesgue measure and Hausdorff dimension of the following set:
\[
F(r, \psi)=\Big\{ x \in [0,1): \exists 1 \leq k_1< \cdots < k_r \leq n, a_{k_i} (x) \geq \psi(n)~(i=1, \ldots, r)\  \text{for i.m.}~n \in \mathbb{N} \Big\},
\]
where `i.m.' stands for `infinitely many', $r \geq 1$ and $\psi$ is a positive function defined on $\mathbb{N}$.
\end{abstract}

\maketitle

\section{ \emph{Introduction}}

It is well known that every real number $x \in [0,1)$ admits a continued fraction expansion of the form
\begin{align}\label{CE}
	x  &=\frac{1}{a_1(x) +\cfrac{1}{a_2(x) +\cfrac{1}{a_3(x) +\ddots}}}  :=[a_1(x), a_2(x), a_3(x), \ldots],
\end{align}
where $a_1(x), a_2(x), a_3(x), \ldots$ are positive integers, known as the $\mathbf{partial~quotients}$ of $x$.
The expansion is generated by the Gauss map $T: [0, 1) \to [0, 1)$ defined by
\begin{equation}\label{def:Gauss}
	T(0) =0, ~ T(x) = 1/x - \lfloor 1/x \rfloor  ~\text{if}~x \in (0,1),
\end{equation}
where $\lfloor 1/x \rfloor$ denotes the integer part of $1/x$.
For any $n \geq 1$, the finite truncation
\[
[a_1(x), a_2(x), \ldots, a_n(x)]: = \frac{p_n(x)}{q_n(x)}
\]
is called the $n$th $\mathbf{convergent}$ of $x$.

The metric theory of continued fractions is concerned with the quantitative study of the properties of partial quotients for almost all $x \in [0,1)$.
One of its central topics is the study of the metric properties of the classical set of $\psi$-well approximable numbers
\begin{equation}
	\bigg\{x \in [0,1): a_{n+1}(x) \geq \frac{1}{q_n(x)^2 \psi(q_n(x))} \  \ \text{for i.m.}~n \in \mathbb{N} \bigg\}.
\end{equation}
Here, `i.m.' stands for `infinitely many' and $\psi: \mathbb{N} \to \mathbb{R}$ is a positive non-increasing function.
The classical theorems of Khintchine and Jarn\'ik in Diophantine approximation were both established through the study of this set.
A real number $x$ is said to be $\psi$-$\mathbf{well~approximable}$ if its partial quotients grow sufficiently fast.
In other words, the growth rate of the partial quotients reveals how well a real number can be approximated by rational numbers.
Let $\psi : \mathbb{N} \to \mathbb{R}^{+}$ be a positive function that tends to infinity as $n \to \infty$,
and define
\[
E_1(\psi): =\Big\{x \in [0,1): a_n(x) \geq \psi(n)\  \ \text{for i.m.}~n \in \mathbb{N} \Big\}.
\]
The well-known Borel--Bernstein theorem \cite{Bernstein1911, Borel1912}, an analogue of Borel--Cantelli lemma with respect to Lebesgue measure for the set of real numbers with large partial quotients, describes the Lebesgue measure of $E_1(\psi)$ as follows:
\begin{theorem}[Borel--Bernstein]
	The Lebesgue measure of $E_1(\psi)$ is either zero or full according as the series $\sum_{n=1}^{\infty} 1/ \psi(n)$ converges or diverges, respectively.
\end{theorem}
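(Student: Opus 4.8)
The plan is to write $E_1(\psi)$ as a $\limsup$ set and run Borel--Cantelli-type arguments in both directions, the only subtlety being that the events involved are not independent. Put $A_n = \{x \in [0,1): a_n(x) \ge \psi(n)\}$, so $E_1(\psi) = \limsup_{n\to\infty} A_n$. Since $\psi(n) \to \infty$, deleting finitely many $A_n$ affects neither $\limsup_n A_n$ nor the convergence of $\sum_n 1/\psi(n)$, so we may assume $\psi(n) \ge 1$ for all $n$. The one ingredient we need is a \emph{uniform} estimate for the measure of $A_n$ inside cylinders of rank $n-1$: writing $I_{n-1}(b_1,\dots,b_{n-1}) = \{x : a_i(x) = b_i,\ 1 \le i \le n-1\}$, there are absolute constants $0 < c_1 \le c_2$ such that for every string of positive integers $(b_1,\dots,b_{n-1})$,
\[
c_1\,\frac{\lambda\big(I_{n-1}(b_1,\dots,b_{n-1})\big)}{\psi(n)} \ \le\ \lambda\big(A_n \cap I_{n-1}(b_1,\dots,b_{n-1})\big) \ \le\ c_2\,\frac{\lambda\big(I_{n-1}(b_1,\dots,b_{n-1})\big)}{\psi(n)}.
\]
This follows from the length formula $\lambda(I_n(b_1,\dots,b_n)) = 1/(q_n(q_n+q_{n-1}))$ together with the recursion $q_n = b_n q_{n-1}+q_{n-2}$, which give $\lambda(I_n)/\lambda(I_{n-1}) \asymp b_n^{-2}$, followed by the elementary bound $\sum_{b_n \ge \lceil \psi(n)\rceil} b_n^{-2} \asymp \psi(n)^{-1}$. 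Summing over all rank-$(n-1)$ cylinders yields $\lambda(A_n) \asymp 1/\psi(n)$; hence, if $\sum_n 1/\psi(n) < \infty$, then $\sum_n \lambda(A_n) < \infty$ and the first Borel--Cantelli lemma gives $\lambda(E_1(\psi)) = 0$.

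For the divergence case $\sum_n 1/\psi(n) = \infty$, I would invoke the conditional (L\'evy) form of the Borel--Cantelli lemma with respect to the filtration $\mathcal{F}_n = \sigma(a_1,\dots,a_n)$. Since $A_n \in \mathcal{F}_n$, almost surely $\limsup_n A_n = \{ x : \sum_n \lambda(A_n \mid \mathcal{F}_{n-1})(x) = \infty \}$. By the uniform estimate, the conditional measure $\lambda(A_n \mid \mathcal{F}_{n-1})$ is constant on each rank-$(n-1)$ cylinder and bounded below by $c_1/\psi(n)$, so $\sum_n \lambda(A_n \mid \mathcal{F}_{n-1})(x) = \infty$ for \emph{every} $x$; therefore $\lambda(E_1(\psi)) = 1$. (Alternatively, the same cylinder estimate yields a quasi-independence bound $\lambda(A_m \cap A_n) \le C\,\lambda(A_m)\lambda(A_n)$ for $m \ne n$, which, fed into the Chung--Erd\H{o}s inequality, gives $\lambda(\limsup_n A_n) \ge 1/C > 0$; this can then be upgraded to full measure via a zero--one law for the Gauss map.)

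The step I expect to be the main obstacle is the uniform cylinder estimate displayed above: one must control the relative measure of $\{a_n \ge \psi(n)\}$ inside \emph{every} cylinder of rank $n-1$ by the \emph{same} two constants, i.e., one needs the bounded-distortion (quasi-independence) property of continued fraction cylinders. Once that is available, both halves of the dichotomy follow quickly.
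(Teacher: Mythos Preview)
The paper does not prove this statement: the Borel--Bernstein theorem is quoted as a classical result with references to Bernstein (1911) and Borel (1912), so there is no ``paper's own proof'' to compare against. Your argument is the standard one and is correct; the bounded-distortion estimate you display is exactly Proposition~\ref{cylinder} in the paper (items (1) and (3)), and both your primary route via the L\'evy conditional Borel--Cantelli lemma and your alternative via Chung--Erd\H{o}s plus ergodicity are valid. For what it is worth, the paper's proof of its own main measure result (Theorem~\ref{leb}, the $r\ge 1$ generalisation) follows the spirit of your \emph{alternative}: it obtains quasi-independence from the exponential mixing of the Gauss map (Proposition~\ref{MP}) and then applies Petrov's version of the Chung--Erd\H{o}s inequality (Lemma~\ref{l:quasiind}), upgrading to full measure by letting the quasi-independence constant tend to $1$ along a subsequence. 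Your L\'evy-filtration approach is slicker for $r=1$ but does not extend as readily to the ``$r$ large partial quotients among the first $n$'' setting, which is presumably why the paper opts for mixing-based quasi-independence.
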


Indeed, Wang and Wu \cite{Wang2008} provided a complete characterization of the Hausdorff dimension of the set $E_1(\psi)$, stated as follows:
\begin{theorem}[{\cite[Theorem 4.2]{Wang2008}}]
	Let $\psi$ be an arbitrary positive function defined on natural numbers. Define
	\[
	\log B = \liminf_{n \to \infty} \frac{\log \psi(n)}{n} \quad
	\text{and} \quad
	\log b =\liminf_{n \to \infty} \frac{\log \log \psi(n)}{n}.
	\]
	\begin{itemize}
		\item[(1)] If $B=1$, then $\dim_{\mathrm{H}} E_1(\psi) =1$.
		\item[(2)] If $1 < B < \infty$, then
		\[
		\dim_{\mathrm{H}} E_1(\psi)= \inf \Big\{s \geq 0: P(T, -s\log B-s \log |T'| )\leq 0 \Big\}.
		\]
		\item[(3)] If $B=\infty$, then $\dim_{\mathrm{H}} E_1(\psi)=\frac{1}{1+b}$.
	\end{itemize}
	Here $P$ denotes the pressure function, defined in Section 2.3.
	 Throughout this paper, $\dim_{\mathrm{H}}$ stands for the Hausdorff dimension and $T'$ denotes the derivative of the Gauss map $T$.
\end{theorem}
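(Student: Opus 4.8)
The plan is to treat the three cases through separate upper and lower bounds for $\dim_{\mathrm H}E_1(\psi)$, and in each bound to first replace $\psi$ by a tractable model function. Since $E_1(\psi_1)\subseteq E_1(\psi_2)$ whenever $\psi_1\ge\psi_2$, and since the right-hand sides of $(1)$--$(3)$ are continuous in $B$ and $b$, for each small $\varepsilon>0$ the definitions of $B,b$ as $\liminf$'s yield $\psi(n)\ge (Be^{-\varepsilon})^{n}$ (resp.\ $\psi(n)\ge\exp((be^{-\varepsilon})^{n})$) for all large $n$, while along a suitable subsequence $(n_k)$ one has $\psi(n_k)\le (Be^{\varepsilon})^{n_k}$ (resp.\ $\psi(n_k)\le\exp((be^{\varepsilon})^{n_k})$); sending $\varepsilon\to0$ at the end, it suffices to handle $\psi(n)=B^{n}$ with $1\le B<\infty$ (this settles $(1)$, the case $B=1$, and $(2)$, the claimed infimum being $1$ when $B=1$ since $P(T,-\log|T'|)=0$) and $\psi(n)=\exp(\beta^{n})$ with $\beta>1$ (for $(3)$; when $b=1$ the value $1/2$ drops out of the elementary bound below). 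Throughout I would use the classical estimates $q_n\asymp\prod_{i\le n}a_i$, $|I_n(a_1,\dots,a_n)|\asymp q_n^{-2}$, the bounded distortion of $T$ on cylinders, and, for $(1)$--$(2)$, the thermodynamic formalism of the Gauss map: writing $P_\infty(s):=\lim_n\frac1n\log\sum_{(a_1,\dots,a_n)}|I_n|^{s}$, which is finite, continuous and strictly decreasing on $(1/2,\infty)$ with $P_\infty(1)=0$ and satisfies $P(T,-s\log B-s\log|T'|)=P_\infty(s)-s\log B$, together with its finite-alphabet truncations $P_M(s)\uparrow P_\infty(s)$ and their Gibbs/equilibrium states.

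\emph{Upper bound.} Write $E_1(\psi)=\bigcap_N\bigcup_{n\ge N}A_n$ with $A_n=\{x:a_n(x)\ge\psi(n)\}$, and note that $A_n\cap I_{n-1}(a_1,\dots,a_{n-1})$ is the single interval $J_{n-1,n}$ of length $\asymp|I_{n-1}|/\psi(n)$, namely the pull-back of $(0,1/\psi(n)]$ under $T^{n-1}$. For $s>1/2$, covering $\bigcup_{n\ge N}A_n$ by these intervals gives
\[
\sum_{n\ge N}\ \sum_{(a_1,\dots,a_{n-1})}|J_{n-1,n}|^{s}\ \asymp\ \sum_{n\ge N}\psi(n)^{-s}\,e^{(n-1)P_\infty(s)},
\]
which for $\psi(n)=B^{n}$ is a geometric series vanishing as $N\to\infty$ exactly when $P_\infty(s)-s\log B<0$; hence $\dim_{\mathrm H}E_1(\psi)\le\inf\{s:P_\infty(s)-s\log B\le0\}$, yielding $(1)$ and the upper bound in $(2)$, and, when $\psi(n)=\exp(\beta^n)$, the unconditional bound $\dim_{\mathrm H}E_1(\psi)\le1/2$ (sharp for $b=1$). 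For $\beta>1$ this crude cover is too wasteful and must be refined. Fixing the threshold $Q_n\asymp\psi(n)^{1/(\beta-1)}$, I would cover the portion of $A_n$ lying in cylinders $I_{n-1}$ with $q_{n-1}\le Q_n$ by the intervals $J_{n-1,n}$ themselves: the number of admissible prefixes is then only $\asymp Q_n(\log Q_n)^{n}/n!$, so that, combined with the factor $\psi(n)^{-s}$, their total $s$-mass is $\asymp\psi(n)^{(1-s(\beta+1))/(\beta-1)}e^{O(n^2)}$, summable in $n$ for every $s>\frac1{1+\beta}$; the prefixes with $q_{n-1}>Q_n$ are handled by descending to the first index at which the continuant exceeds a suitable power of $\psi(n)$ and covering there. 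The exponent $\frac1{1+\beta}=\frac1{1+b}$ is exactly where this balance closes. Making the contribution of the large-$q_{n-1}$ prefixes converge for all $s>\frac1{1+b}$ requires a further careful decomposition of the cover, and this multi-scale estimate is the technical heart of the argument and the step I expect to be the main obstacle.

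\emph{Lower bound.} For $(1)$--$(2)$, fix $M\ge2$ and a subsequence $(n_k)$ with $\psi(n_k)\le (Be^{\varepsilon})^{n_k}$ so sparse that $n_k/n_{k+1}\to0$, and let $\mathcal C_M$ be the set of $x$ with $a_{n_k}(x)\in[(Be^{\varepsilon})^{n_k},2(Be^{\varepsilon})^{n_k})$ for all $k$ and $a_j(x)\in\{1,\dots,M\}$ for $j\notin\{n_k\}$; then $a_{n_k}(x)\ge\psi(n_k)$ for all large $k$, so $\mathcal C_M\subseteq E_1(\psi)$. Endow $\mathcal C_M$ with the measure $\mu$ that, on the $M$-bounded blocks, is the Gibbs state of the truncated system for the potential $-s^{*}\log|T'|$ with $s^{*}:=\inf\{s:P_M(s)-s\log(Be^{\varepsilon})\le0\}$, and, at each level $n_k$, splits mass proportionally to cylinder lengths. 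Using the sparseness of $(n_k)$ to compare $\mu(B(x,\rho))$ with $\mu$ of the relevant cylinders at all intermediate scales, one finds that the binding constraint in the mass distribution principle occurs at the scale $\rho\asymp|I_{n_k-1}|\,(Be^{\varepsilon})^{-n_k}$, where the local dimension equals $\tfrac{h_M(s^{*})}{\chi_M(s^{*})+\log(Be^{\varepsilon})}=s^{*}$ (using the Gibbs relation $h_M(s^{*})-s^{*}\chi_M(s^{*})=P_M(s^{*})=s^{*}\log(Be^{\varepsilon})$), while at every other scale the local dimension is $\ge s^{*}$; hence $\dim_{\mathrm H}\mathcal C_M\ge s^{*}$, and $M\to\infty$ followed by $\varepsilon\to0$ gives the lower bounds in $(1)$ and $(2)$. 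In case $(3)$ this construction collapses, the same computation giving local dimension $\asymp h_M n_k/\log\psi(n_k)\to0$; instead I would use a {\L}uczak-type set with large digits everywhere, $\mathcal D:=\{x:a_n(x)\in[\tilde a_n,2\tilde a_n)\text{ for all }n\}$ with $\tilde a_n:=\lceil\exp((be^{\varepsilon})^{n})\rceil$. Since $\log\log\tilde a_{n_k}=n_k\log(be^{\varepsilon})\ge\log\log\psi(n_k)$ along the $\liminf$-subsequence, we get $a_{n_k}(x)\ge\psi(n_k)$ for infinitely many $k$, so $\mathcal D\subseteq E_1(\psi)$; and for the natural (length-proportional) measure on $\mathcal D$ one has $q_{n-1}\asymp\tilde a_n^{1/(be^{\varepsilon}-1)}$, so the local dimension is minimised at the scales $\rho\asymp q_{n-1}^{-2}\tilde a_n^{-1}$, where it equals $\tfrac{1/(be^{\varepsilon}-1)}{(be^{\varepsilon}+1)/(be^{\varepsilon}-1)}=\tfrac1{1+be^{\varepsilon}}$; thus $\dim_{\mathrm H}\mathcal D\ge\frac1{1+be^{\varepsilon}}$, and $\varepsilon\to0$ completes $(3)$.
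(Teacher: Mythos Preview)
This theorem is quoted from \cite{Wang2008} and is not proved in the present paper; however, Section~4 proves the generalisation Theorem~\ref{Hausdorff}, and its special case $r=1$ is exactly the statement at hand, so one can compare against that argument. For cases (1) and (2) your sketch is essentially correct and parallel to Sections~4.1--4.2: the natural cover by the intervals $J_{n-1}(\bm a)$ combined with the pressure estimate gives the upper bound, and your Gibbs-measure Cantor construction for the lower bound is the standard route which the paper shortcuts by invoking Hussain--Shulga (Theorem~\ref{t:dimension}). One minor inaccuracy: the relation $q_n\asymp\prod_{i\le n}a_i$ you invoke is false (take all $a_i=1$, where $\prod a_i=1$ but $q_n$ grows like $\phi^n$); only $\prod a_i\le q_n\le\prod(a_i+1)$ holds, which is what Proposition~\ref{cylinder} encodes and is all the covering argument actually needs.

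The genuine gap is the upper bound in case (3) when $b>1$. You correctly observe that the crude cover yields only $\dim_{\mathrm H}E_1(\psi)\le 1/2$, and then propose a multi-scale refinement splitting prefixes according to whether $q_{n-1}\le Q_n\asymp\psi(n)^{1/(\beta-1)}$, conceding that the large-$q_{n-1}$ contribution is ``the main obstacle'' and leaving it unhandled. This refinement is both incomplete and unnecessary. Once $\psi$ is taken non-decreasing, for every $1<c<b$ one has $\psi(n)\ge e^{c^n}$ for all large $n$, hence
\[
E_1(\psi)\subset\bigl\{x:a_n(x)\ge e^{c^n}\text{ for i.m.\ }n\bigr\},
\]
and the {\L}uczak/Feng--Wu--Liang--Tseng theorem (Theorem~\ref{thm:dim} here) gives this superset Hausdorff dimension $1/(1+c)$; letting $c\uparrow b$ finishes the upper bound with no covering argument at all. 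This is exactly what Section~4.3 does for general $r$, and it is the same cited result you already use (in the guise of your set $\mathcal D$) for the lower bound in (3), so it closes both directions simultaneously.
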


On the one hand, motivated by the study of uniform Diophantine approximation, Kleinbock and Wadleigh \cite{Kleinbock2018} introduced the following set
\[
E_2(\psi)= \Big\{x \in [0, 1): a_n(x)  a_{n+1}(x) \geq \psi(n)~\text{for~i.m.}~n \in \mathbb{N} \Big\}.
\]
They showed that the Lebesgue measure $\mathcal{L}(E_2(\psi))$ is zero or full depending on whether the series $\sum_{n=1}^{\infty} \log \psi(n)/ \psi(n)$ converges or diverges, respectively.
Furthermore, for an integer $m \geq 1$, define the following set
\[
E_m(\psi)= \Big\{x \in [0, 1): a_n(x) \cdots a_{n+m-1}(x) \geq\psi(n) ~\text{for i.m.}~n \in \mathbb{N} \Big\}.
\]
In \cite{Huang20}, Huang, Wu and Xu gave a complete description of  the Lebesgue measure of the above set.
\begin{theorem}[{\cite[Theorem 1.5]{Huang20}}]
	Let $\psi : \mathbb{N} \to [2, +\infty)$ be a positive function. Then
	\begin{equation*}
		\mathcal{L}(E_m(\psi))=\begin{dcases}
			0,   & ~\text{if}~\sum\limits_{n =1}^{\infty} \frac{\log^{m-1} \psi(n)}{\psi(n)} < \infty,\\
			1,   & ~\text{if}~\sum\limits_{n = 1}^{\infty } \frac{\log^{m-1} \psi(n)}{\psi(n)} = \infty. \\
		\end{dcases}
	\end{equation*}
\end{theorem}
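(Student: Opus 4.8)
The plan is to express $E_m(\psi)=\limsup_{n\to\infty}A_n$, where
\[
A_n=\Big\{x\in[0,1): a_n(x)a_{n+1}(x)\cdots a_{n+m-1}(x)\ge\psi(n)\Big\},
\]
and to decide the Lebesgue measure of this $\limsup$ set through a Borel--Cantelli dichotomy. The engine for both directions is a sharp two-sided estimate
\[
\mathcal{L}(A_n)\asymp\frac{\log^{m-1}\psi(n)}{\psi(n)},
\]
with implied constants depending only on $m$. I would obtain it in two steps. First, an elementary counting lemma,
\[
\sum_{\substack{b_1,\dots,b_m\ge 1\\ b_1\cdots b_m\ge t}}\frac{1}{(b_1\cdots b_m)^2}\asymp\frac{\log^{m-1}t}{t}\qquad (t\ge 2),
\]
proved by induction on $m$, splitting the innermost sum at the threshold $t/(b_1\cdots b_{m-1})$. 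Second, a transfer to cylinders: since $a_n(x)\cdots a_{n+m-1}(x)=a_1(T^{n-1}x)\cdots a_m(T^{n-1}x)$, we have $A_n=T^{-(n-1)}D_n$ with $D_n=\{y:a_1(y)\cdots a_m(y)\ge\psi(n)\}$, and using that the Gauss measure $\mu$ (density $\tfrac{1}{\log 2}\cdot\tfrac{1}{1+x}$) is $T$-invariant and comparable to $\mathcal{L}$ gives $\mathcal{L}(A_n)\asymp\mu(D_n)\asymp\mathcal{L}(D_n)$; decomposing $D_n$ into the rank-$m$ cylinders $I_m(b_1,\dots,b_m)=\{y:a_1(y)=b_1,\dots,a_m(y)=b_m\}$, whose lengths satisfy $\mathcal{L}(I_m(b_1,\dots,b_m))\asymp(b_1\cdots b_m)^{-2}$ (constant depending on the fixed $m$), reduces $\mathcal{L}(D_n)$ to the counting lemma with $t=\psi(n)$. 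We may also assume $\psi(n)\to\infty$: otherwise $\liminf_n\psi(n)<\infty$, the series diverges, and a short argument along a sparse subsequence on which $\psi$ stays bounded shows that $E_m(\psi)$ already has full measure.

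For the convergence case this is immediate: if $\sum_n\log^{m-1}\psi(n)/\psi(n)<\infty$, then $\sum_n\mathcal{L}(A_n)<\infty$ by the estimate above, and the first Borel--Cantelli lemma yields $\mathcal{L}(E_m(\psi))=\mathcal{L}(\limsup_n A_n)=0$.

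For the divergence case the difficulty is that $A_n$ and $A_{n'}$ share a partial quotient when $|n-n'|<m$, so the events are not independent; I would therefore use the Kochen--Stone (second-moment) form of the Borel--Cantelli lemma. Pairs with $|n-n'|<m$ contribute at most $(2m-1)\sum_{n\le N}\mathcal{L}(A_n)$ to $\sum_{n,n'\le N}\mathcal{L}(A_n\cap A_{n'})$, which is negligible compared with $\big(\sum_{n\le N}\mathcal{L}(A_n)\big)^2$ once $\sum_n\mathcal{L}(A_n)=\infty$. For pairs with $|n-n'|\ge m$ the constraints defining $A_n$ and $A_{n'}$ depend on disjoint blocks of partial quotients; decomposing $A_n\cap A_{n'}$ over the rank-$(\min(n,n')+m-1)$ cylinders and invoking the uniform bounded-distortion property of the Gauss map gives $\mathcal{L}(A_n\cap A_{n'})\le C\,\mathcal{L}(A_n)\mathcal{L}(A_{n'})$ with $C$ absolute. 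Summing these two contributions, $\sum_{n,n'\le N}\mathcal{L}(A_n\cap A_{n'})\le C'\big(\sum_{n\le N}\mathcal{L}(A_n)\big)^2$ for all large $N$, so Kochen--Stone gives $\mathcal{L}(E_m(\psi))\ge 1/C'>0$. To upgrade positivity to full measure, note that $\limsup_n A_n$ depends only on the tail $(a_k(x),a_{k+1}(x),\dots)$ for every $k$; since the Gauss map is exact, every such tail event has measure $0$ or $1$, whence $\mathcal{L}(E_m(\psi))=1$. (Alternatively, rerunning the Kochen--Stone estimate inside an arbitrary cylinder $I_N$ yields $\mathcal{L}(E_m(\psi)\cap I_N)\ge c\,\mathcal{L}(I_N)$ with $c>0$ uniform in $I_N$, and the Lebesgue density theorem finishes.)

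The main obstacle is the package of metric estimates underpinning the argument: the two-sided bound $\mathcal{L}(A_n)\asymp\log^{m-1}\psi(n)/\psi(n)$, which hinges on the $m$-fold product structure via the counting lemma, and especially the uniform quasi-independence bound $\mathcal{L}(A_n\cap A_{n'})\le C\,\mathcal{L}(A_n)\mathcal{L}(A_{n'})$ for $|n-n'|\ge m$, where one must control a set cut out by two separated ``product of $m$ consecutive partial quotients'' conditions by decomposing over the cylinder lying between the two blocks and applying the counting lemma twice, all while keeping the distortion constants independent of $n$ and $n'$. Once these are in hand, the convergence part, the divergence part, and the zero--one upgrade are routine.
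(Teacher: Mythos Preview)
The paper does not prove this statement: it is quoted verbatim from Huang, Wu and Xu \cite{Huang20} as background, with no argument supplied here. So there is no in-paper proof to compare your proposal against.

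That said, your outline is the standard and correct route to this result, and it is essentially the one Huang--Wu--Xu take. The key estimate
\[
\sum_{\substack{b_1,\dots,b_m\ge 1\\ b_1\cdots b_m\ge t}}\frac{1}{(b_1\cdots b_m)^2}\asymp\frac{\log^{m-1}t}{t}
\]
is right and the inductive proof you describe works; combined with $|I_m(b_1,\dots,b_m)|\asymp (b_1\cdots b_m)^{-2}$ and the equivalence $\mu\asymp\mathcal L$ it yields $\mathcal L(A_n)\asymp\log^{m-1}\psi(n)/\psi(n)$. The convergence half is then immediate. For the divergence half, your treatment of the near-diagonal pairs $|n-n'|<m$ by the trivial bound is fine, and for $|n-n'|\ge m$ the bounded-distortion argument (decompose $A_n$ into rank-$(n+m-1)$ cylinders, use that $T^{n+m-1}$ restricted to each such cylinder has Jacobian comparable to $|I_{n+m-1}|^{-1}$, and pass through the $T$-invariant Gauss measure to absorb the remaining shift) gives the uniform quasi-independence constant you need. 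The upgrade from positive to full measure via exactness of the Gauss map is also standard. One small point: in the case $\psi(n)\not\to\infty$, make sure to pass to a subsequence $(n_k)$ along which $\psi$ is bounded \emph{and} with gaps $n_{k+1}-n_k\ge m$, so that the events $\{a_{n_k}\ge M\}\subset A_{n_k}$ are quasi-independent; then the same Chung--Erd\H{o}s plus tail zero--one law finishes.
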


In addition, Huang, Wu and Xu determined the Hausdorff dimension of the set  $E_m (\psi)$:
\begin{theorem}[{\cite[Theorem 1.7]{Huang20}}]
	Write
	\[
	\log B = \liminf_{n \to \infty} \frac{\log \psi(n)}{n} \quad
	\text{and} \quad
	\log b =\liminf_{n \to \infty} \frac{\log \log \psi(n)}{n}.
	\]
	Then
	\begin{equation}
	\dim_{\mathrm{H}} E_m (\psi)=\begin{dcases}
		1,   & ~\text{if}~B=1,\\
		\inf\Big\{s \geq 0: P(T, -f_m(s) \log B-s \log |T'| ) \leq 0 \Big\},   & ~\text{if}~1< B< \infty, \\
		\frac{1}{1+b}, &\text{if}~B=\infty,
	\end{dcases}
	\end{equation}
	where $f_m(s)$ is given by the following iterative formula:
	\[
	f_1(s)=s, \quad f_{k+1}(s) = \frac{s f_k (s)}{1-s+f_k(s)}, k \geq 1.
	\]
\end{theorem}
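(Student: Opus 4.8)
The plan is to treat the three ranges of $B$ separately, reducing the degenerate cases $B=1$ and $B=\infty$ to the description of $\dim_{\mathrm H}E_1(\psi)$ quoted above and doing the real work when $1<B<\infty$. Since $a_n(x)\ge\psi(n)$ forces $a_n(x)\cdots a_{n+m-1}(x)\ge\psi(n)$, one always has $E_1(\psi)\subseteq E_m(\psi)$; because the quantities $B$ and $b$ attached to $\psi$ are the same in both theorems, this already gives $\dim_{\mathrm H}E_m(\psi)=1$ when $B=1$ and $\dim_{\mathrm H}E_m(\psi)\ge\frac1{1+b}$ when $B=\infty$. For the reverse inequality in the case $B=\infty$, note that $a_n\cdots a_{n+m-1}\ge\psi(n)$ forces $a_{n+j}\ge\psi(n)^{1/m}$ for some $0\le j\le m-1$, whence $E_m(\psi)\subseteq\bigcup_{j=0}^{m-1}\{x:a_{n+j}(x)\ge\psi(n)^{1/m}\text{ for i.m. }n\}$; each set in this union is an $E_1$-set for a function $\widetilde\psi_j(\ell)=\psi(\ell-j)^{1/m}$ satisfying $\liminf_\ell\frac{\log\log\widetilde\psi_j(\ell)}{\ell}=\log b$ and $\liminf_\ell\frac{\log\widetilde\psi_j(\ell)}{\ell}=\infty$, so the quoted theorem for $E_1$ together with the countable stability of Hausdorff dimension yields $\dim_{\mathrm H}E_m(\psi)\le\frac1{1+b}$.

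For $1<B<\infty$ the heart of the matter is a purely combinatorial covering estimate. Fix $s\in(\tfrac12,1)$, write $I_k(a_1,\dots,a_k)$ for a rank-$k$ cylinder, and for $T\ge2$ and $1\le k\le m$ let $G_k(T)$ be the infimum of $\sum_i|J_i|^s\big/|I_{n-1}(a_1,\dots,a_{n-1})|^s$ over all covers by intervals $\{J_i\}$ of the set $\{y\in I_{n-1}(a_1,\dots,a_{n-1}):a_n(y)\cdots a_{n+k-1}(y)\ge T\}$; by bounded distortion this is independent of the prefix up to a multiplicative constant. Using the two standard facts that $\bigcup_{a\ge t}I_{\ell+1}(a_1,\dots,a_\ell,a)$ is an interval of length $\asymp(t\,q_\ell^2)^{-1}$ and that $|I_{\ell+1}(a_1,\dots,a_\ell,a)|\asymp a^{-2}|I_\ell(a_1,\dots,a_\ell)|$, one gets $G_1(T)\asymp T^{-s}$ together with the recursion
\[
G_{k+1}(T)\ \asymp\ \inf_{1\le\tau\le T}\Big(\tau^{-s}\ +\ \sum_{1\le t<\tau}t^{-2s}\,G_k(T/t)\Big),\qquad k\ge1,
\]
in which the first term covers $\{a_n\ge\tau\}$ by a single interval and the second refines inside each cylinder $\{a_n=t\}$ with $t<\tau$. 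An immediate induction gives $f_k(s)>2s-1$, so that $\sum_{t<\tau}t^{f_k(s)-2s}\asymp\tau^{1+f_k(s)-2s}$; substituting the inductive hypothesis $G_k(T)\asymp T^{-f_k(s)}$ into the recursion, the infimum is attained at $\tau\asymp T^{f_k(s)/(1-s+f_k(s))}$ and equals $\asymp T^{-sf_k(s)/(1-s+f_k(s))}=T^{-f_{k+1}(s)}$. Hence $G_m(T)\asymp T^{-f_m(s)}$, with constants uniform for $s$ in compact subsets of $(\tfrac12,1)$; this ``merge-or-refine'' optimisation is precisely where the iterative formula for $f_m$ is forced.

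Granting this, the upper bound is routine: for any $\varepsilon>0$ one has $\psi(n)\ge(B-\varepsilon)^n$ eventually, so $E_m(\psi)\subseteq\bigcap_N\bigcup_{n\ge N}\{x:a_n(x)\cdots a_{n+m-1}(x)\ge(B-\varepsilon)^n\}$, and covering the $n$-th set as above and summing over the free prefix yields an $s$-sum of size $\asymp G_m\!\big((B-\varepsilon)^n\big)\sum_{(a_1,\dots,a_{n-1})}|I_{n-1}|^s\asymp(B-\varepsilon)^{-nf_m(s)}e^{(n-1)P(T,-s\log|T'|)+o(n)}$, using $\frac1k\log\sum_{(a_1,\dots,a_k)}|I_k|^s\to P(T,-s\log|T'|)$. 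This is summable in $n$ whenever $P(T,-s\log|T'|)<f_m(s)\log(B-\varepsilon)$, i.e.\ $P\big(T,-f_m(s)\log(B-\varepsilon)-s\log|T'|\big)<0$, so letting $\varepsilon\to0$ and using the continuity and monotonicity of the pressure (with $P(T,-\log|T'|)=0$, and $P(T,-s\log|T'|)\to\infty$ as $s$ decreases to $1/2$, which also places the root $s_0$ in $(\tfrac12,1)$) gives $\dim_{\mathrm H}E_m(\psi)\le s_0$. For the lower bound I would choose a rapidly increasing sequence $n_1<n_2<\cdots$ with $\psi(n_k)\le(B+\varepsilon)^{n_k}$, fix a large integer $M$, and build a Cantor set $\mathcal K\subseteq E_m(\psi)$ whose partial quotients run freely over $\{1,\dots,M\}$ away from the blocks $[n_k,n_k+m-1]$ and whose admissible continuations across the $k$-th block are exactly the adaptive-depth intervals produced by the optimal cover, now required to satisfy $a_{n_k}\cdots a_{n_k+m-1}\ge(B+\varepsilon)^{n_k}\ge\psi(n_k)$; distributing the mass of each parent among its children in proportion to the $s_0$-th powers of their lengths, the combinatorial estimate makes this measure $\big(s_0-O(\tfrac{\log B}{\log M})\big)$-regular at every scale (the loss occurring only at the sparse scales attached to the blocks), so the mass distribution principle followed by $M\to\infty$ and $\varepsilon\to0$ gives $\dim_{\mathrm H}E_m(\psi)\ge s_0$.

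The step I expect to be the main obstacle is the combinatorial estimate $G_m(T)\asymp T^{-f_m(s)}$ itself — in particular proving that the ``merge-or-refine'' cover is optimal up to constants in both directions, and keeping all implied constants uniform in $s$ near $s_0$ so that the thermodynamic identity $\sum_{(a_1,\dots,a_k)}|I_k|^s=e^{kP(T,-s\log|T'|)+o(k)}$ may legitimately be invoked. The matching lower-bound check — that the natural measure on the block-adapted Cantor set is $(s_0-o(1))$-regular down to and including the intermediate scales inside cylinders — is the other technically demanding point, though it follows the by-now standard template for limsup sets governed by the growth of partial quotients.
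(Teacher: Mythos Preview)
This theorem is not proved in the paper: it is quoted verbatim from \cite[Theorem~1.7]{Huang20} as background in the introduction, with no argument supplied. There is therefore no ``paper's own proof'' to compare your proposal against.

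That said, your sketch is a plausible outline of how the result in \cite{Huang20} is obtained. The reduction of the cases $B=1$ and $B=\infty$ to the Wang--Wu theorem for $E_1(\psi)$ via the trivial inclusion $E_1(\psi)\subseteq E_m(\psi)$ and the pigeonhole bound $E_m(\psi)\subseteq\bigcup_j E_1(\widetilde\psi_j)$ is correct and standard. For $1<B<\infty$, your recursive covering estimate $G_m(T)\asymp T^{-f_m(s)}$ captures exactly the mechanism that produces the iterative formula for $f_m$; this is indeed the key technical input in \cite{Huang20}, and your identification of it as the main obstacle is accurate. The lower bound via a Cantor construction with free digits in $\{1,\dots,M\}$ and controlled blocks is likewise the standard template. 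So while I cannot compare approaches, your proposal is consistent with how such results are typically established, and you have correctly located where the real difficulty lies.
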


For further studies on related sets, we refer readers to \cite{Hussain23, Tan24}.
Clearly, understanding the growth rate of the product of $m$ consecutive partial quotients plays a pivotal role in analyzing the set of Dirichlet non-improvable numbers.

On the other hand, the existence of large partial quotients also destroys some limit theorems in continued fractions, and there is no finite law of large number, i.e.,
\[
\lim_{n \to \infty} \frac{a_1(x)+a_2(x) + \cdots +a_n(x)}{n}  = \infty~\text{almost~surely}.
\]
Let $S_n (x) = \sum_{i=1}^{n} a_i(x)$,  and Khintchine \cite{Khintchine63} proved that $\frac{S_n(x)}{n \log n}$ converges to $\frac{1}{\log 2}$.
However, it is shown by Philipp \cite{Philipp88} that there is no suitable normalizer replacing of the denominator $n$ such that the above limit is positive and finite.
 Diamond and Vaaler \cite{Diamond1986} proved that if the largest $a_i(x)$ is removed from $S_n(x)$,
then for almost every $x \in [0, 1)$,
\[
\lim_{n \to \infty} \frac{S_n(x)-\max \{ a_i(x): 1 \leq i \leq n\} }{n \log n}  =\frac{1}{\log 2}.
\]
A crucial step in obtaining the above result is to show that, for sufficiently large $n$, the set of numbers with two large partial quotients among the first $n$ terms is rare.
More precisely, define
\[
F(2, \psi) = \Big\{x \in [0, 1): \exists~ 1 \leq k \neq l  \leq n, a_k(x) \geq \psi(n), a_l(x) \geq \psi(n)~\text{for i.m.}~n \in \mathbb{N} \Big\}.
\]
Diamond and Vaaler \cite{Diamond1986} showed that $\mathcal{L}\big(F(2, \psi)\big)=0$ when $\psi(n)= n (\log n)^c$ with $c > 1/2$.
However, their result provides only partial information about the size of $F(2, \psi)$.
In comparison with the classical Borel--Bernstein theorem, Tan, Tian and Wang \cite{Tan23} established the following dichotomy law for the Lebesgue measure of $F(2, \psi)$:
\begin{theorem}[{\cite[Theorem 1.3]{Tan23}}]
	Let $\psi : \mathbb{N} \to \mathbb{R}^+$ be a non-decreasing function. Then
	\begin{equation*}
	\mathcal{L}\big(F(2, \psi)\big)=\begin{dcases}
		0,   & ~\text{if}~\sum_{n =1}^{\infty} n \big(\psi(n) \big)^{-2} < \infty,\\
		1,   & ~\text{if}~ \sum\limits_{n = 1}^{\infty } n \big(\psi(n)\big)^{-2} = \infty. \\
	\end{dcases}
	\end{equation*}
\end{theorem}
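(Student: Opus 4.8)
\emph{Proof strategy.} Write
\[
A_n=\Big\{x\in[0,1):\exists\,1\le k<l\le n,\ a_k(x)\ge\psi(n)\ \text{and}\ a_l(x)\ge\psi(n)\Big\},
\]
so that $F(2,\psi)=\limsup_{n\to\infty}A_n$. The plan is to recast this $\limsup$ set so as to avoid the wasteful counting over all pairs $(k,l)$, and then run a Borel--Cantelli argument in each direction. Throughout I will use the classical estimate $\mathcal{L}(\{a_k(x)\ge m\})\asymp m^{-1}$ (uniformly in $k$) and the bounded distortion of the Gauss map, which provides \emph{quasi-independence}: if $E$ is a union of cylinders determined by $a_1,\dots,a_N$ and $H$ is a Borel set determined by $a_{N+1},a_{N+2},\dots$, then $\mathcal{L}(E\cap H)\asymp\mathcal{L}(E)\,\mathcal{L}(H)$ with absolute constants; equivalently, $\mathcal{L}(H\mid a_1,\dots,a_N)$ differs from $\mathcal{L}(H)$ by at most a bounded factor, uniformly over the prefix. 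In particular $\mathcal{L}(\{a_k(x)\ge m,\ a_l(x)\ge m\})\asymp m^{-2}$ for $k\ne l$. Both are standard facts about the Gauss map.

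\emph{Convergence.} Assume $\sum_n n\,\psi(n)^{-2}<\infty$; this forces $\psi(n)\to\infty$. Since $\psi$ is non-decreasing, whenever $x\in A_n$ with witnesses $k<l\le n$ one also has $a_k(x),a_l(x)\ge\psi(l)$, so $x$ lies in $B_l:=\{a_l(x)\ge\psi(l)\}\cap\bigcup_{k<l}\{a_k(x)\ge\psi(l)\}$. If $x\in F(2,\psi)$, the indices $l$ obtained in this way are unbounded --- otherwise, by pigeonhole, a fixed pair $a_{j_1}(x),a_{j_2}(x)$ would exceed $\psi(n)$ for infinitely many $n$, contradicting $\psi(n)\to\infty$ --- so $F(2,\psi)\subseteq\limsup_l B_l$, and the reverse inclusion follows from $B_l\subseteq A_l$. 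Quasi-independence then gives $\mathcal{L}(B_n)\le\sum_{k=1}^{n-1}\mathcal{L}(\{a_k\ge\psi(n)\}\cap\{a_n\ge\psi(n)\})\lesssim n\,\psi(n)^{-2}$, so $\sum_n\mathcal{L}(B_n)<\infty$, and the Borel--Cantelli lemma yields $\mathcal{L}(F(2,\psi))=0$. I expect this half to be routine once the reformulation is in place.

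\emph{Divergence.} Assume $\sum_n n\,\psi(n)^{-2}=\infty$. Here I would work with disjoint dyadic blocks: for $j\ge1$ set
\[
\mathcal{E}_j=\Big\{x\in[0,1):\exists\,2^j\le n_1<n_2<2^{j+1},\ a_{n_1}(x)\ge\psi(2^{j+1})\ \text{and}\ a_{n_2}(x)\ge\psi(2^{j+1})\Big\}.
\]
Since $\psi$ is non-decreasing, $\mathcal{E}_j\subseteq A_{n_2}$ for the associated $n_2<2^{j+1}$, and as $n_2\ge 2^j$, it follows that $\limsup_j\mathcal{E}_j\subseteq\limsup_m A_m=F(2,\psi)$. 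Using quasi-independence of the $2^j$ events $\{a_n\ge\psi(2^{j+1})\}$, $n\in[2^j,2^{j+1})$, together with a Bonferroni bound for the event that at least two of them hold (restricting to a sub-block of length of order $\psi(2^{j+1})$ when that is smaller than $2^j$), one obtains $\mathcal{L}(\mathcal{E}_j)\gtrsim\min\{1,\ 2^{2j}\psi(2^{j+1})^{-2}\}$. Grouping $n\,\psi(n)^{-2}$ over the blocks $[2^j,2^{j+1})$, using $\psi(n)\ge\psi(2^j)$ there, and reindexing shows $\sum_n n\,\psi(n)^{-2}\lesssim 1+\sum_j 2^{2j}\psi(2^{j+1})^{-2}$, so the latter series diverges and therefore $\sum_j\mathcal{L}(\mathcal{E}_j)=\infty$. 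Finally, $\mathcal{E}_j$ depends only on partial quotients of index $\ge 2^j$, so bounded distortion gives $\mathcal{L}(\mathcal{E}_j\mid a_1,\dots,a_{2^j-1})\gtrsim\mathcal{L}(\mathcal{E}_j)$ uniformly over all prefixes; the conditional Borel--Cantelli lemma (L\'evy's extension) then forces $\mathcal{L}(\limsup_j\mathcal{E}_j)=1$, whence $\mathcal{L}(F(2,\psi))=1$. Alternatively, a second-moment (Kochen--Stone) estimate on the $\mathcal{E}_j$ gives $\mathcal{L}(F(2,\psi))>0$, and one upgrades this to full measure through a zero--one law: up to redefining finitely many partial quotients, $F(2,\psi)$ lies in the tail $\sigma$-algebra of $(a_n)_{n\ge1}$, which is trivial because the Gauss map is exact.

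\emph{Main obstacle.} The one genuine difficulty is the factor lost in the crude estimate: bounding $A_n$ by the union over all pairs $(k,l)$ gives only $\mathcal{L}(A_n)\lesssim n^2\psi(n)^{-2}$, and $\sum n^2\psi(n)^{-2}$ is the wrong threshold series. Recovering the sharp series $\sum n\psi(n)^{-2}$ is precisely what the two reformulations accomplish --- pinning the larger witness to the current index $n$ in the convergent case, and passing to disjoint dyadic blocks in the divergent case --- and the monotonicity of $\psi$ is used exactly to make these moves legitimate and, in the divergent case, to keep $\sum_j\mathcal{L}(\mathcal{E}_j)$ divergent even when $\psi$ has large jumps.
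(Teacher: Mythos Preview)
Your proposal is correct and, while it shares the dyadic-block skeleton with the paper's argument (the paper proves the general $r\ge 1$ case in Theorem~\ref{leb}, which specialises to this statement), the execution differs in several places worth noting.

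\textbf{Convergence.} You pin the larger witness at the running index $l$ and work with $B_l$; this is exactly the reformulation the paper records as Lemma~\ref{l:cover}, but there it is used only for the Hausdorff-dimension upper bound, not for the measure. For the measure the paper instead passes to dyadic blocks $\Lambda_{2^{m+1}}(r,\psi(2^m))$ and applies Cauchy condensation. Your route is a little more direct for $r=2$.

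\textbf{Divergence.} Both arguments use dyadic blocks with both witnesses in a single block, but the mechanisms differ. The paper relies on the exponential mixing bound (Proposition~\ref{MP}) to obtain $\mu(F_m)=\frac{2^{r(m-1)}}{r!}(1+o(1))\mu(A_{2^m}^1)^r$ with a genuine $o(1)$ error, which in turn forces an auxiliary step (Lemma~\ref{NF}): one first replaces $\psi$ by $\phi\ge\psi$ with $n\phi(n)^{-1}\to 0$ so that the inclusion--exclusion correction $J_m$ is negligible. Quasi-independence with $(1+o(1))$ constants then feeds into Petrov's lemma (Lemma~\ref{l:quasiind}) and an $\epsilon\to 0$ limit. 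Your approach sidesteps both the mixing input and Lemma~\ref{NF}: the sub-block trick (choosing $M\asymp\psi(2^{j+1})$ so that $Mp$ is a small absolute constant and the Bonferroni bound $S_2-2S_3$ is positive) handles the regime $2^j>\psi(2^{j+1})$ directly, and L\'evy's conditional Borel--Cantelli, fed by bounded distortion alone, yields full measure in one stroke. This is more elementary and avoids the $\epsilon$-limit; the paper's route, by contrast, produces sharp asymptotics for $\mu(F_m)$ and is tailored to work uniformly in $r$.

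One small point to tighten: your Bonferroni step implicitly uses $\mathcal L\bigl(\{a_i\ge m\}\cap\{a_j\ge m\}\cap\{a_k\ge m\}\bigr)\lesssim m^{-3}$ with an absolute constant, i.e.\ three-fold quasi-independence, to control $S_3$. This is standard (iterate bounded distortion, or see Proposition~\ref{Separated}(2) with $r=3$), but it is worth making explicit since ``quasi-independence'' as you stated it covers only pairs.
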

Moreover, they determined the Hausdorff dimension of $F(2, \psi)$ as follows:
\begin{theorem}[{\cite[Theorem 1.3]{Tan23}}]
	Let $\psi : \mathbb{N} \to \mathbb{R}^+$ be a non-decreasing function and
	\[
		\log B = \liminf_{n \to \infty} \frac{\log \psi(n)}{n} \quad
	\log b =\liminf_{n \to \infty} \frac{\log \log \psi(n)}{n}.
	\]
	We see that
	\begin{equation*}
	\dim_{\mathrm{H}} F(2, \psi)=\begin{dcases}
				1,   & ~\text{if}~B=1,\\
				\inf\Big\{s \geq 0: P(T, -(3s-1) \log B-s \log |T'| ) \leq 0 \Big\},   & ~\text{if}~1< B< \infty, \\
				\frac{1}{1+b}, &\text{if}~B=\infty.
			\end{dcases}
	\end{equation*}
\end{theorem}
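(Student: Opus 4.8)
The plan is to establish matching upper and lower bounds in each of the three regimes $B=1$, $1<B<\infty$, $B=\infty$. Throughout, write $I_{n}=I_{n}(a_{1},\dots,a_{n})$ for the cylinder of level $n$, recall $|I_{n}|\asymp q_{n}^{-2}$ together with the bounded distortion estimate $|I_{n}|\asymp\prod_{i=1}^{n}|T'(T^{i-1}x)|^{-1}$, and abbreviate $P(s)=P(T,-s\log|T'|)$; here $P$ is $+\infty$ on $(0,1/2]$, continuous and strictly decreasing on $(1/2,\infty)$ with $P(1)=0$, so for $1<B<\infty$ the equation $P(s)=(3s-1)\log B$ has a unique root $s^{\ast}\in(1/2,1)$ and $s^{\ast}=\inf\{s\ge0:P(T,-(3s-1)\log B-s\log|T'|)\le0\}$. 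The case $B=1$ is easy: $\dim_{\mathrm{H}}F(2,\psi)\le1$ is trivial, while for the reverse inequality one fixes a large integer $M$ and a rapidly increasing sequence $(n_{j})$ with $\log\psi(n_{j})=o(n_{j})$ and $\sum_{i<j}n_{i}=o(n_{j})$, and considers the Cantor set
\[
\mathcal{C}_{M}=\bigl\{x:\ a_{n_{j}-1}(x),a_{n_{j}}(x)\in[\psi(n_{j}),2\psi(n_{j}))\ \forall j,\ \ 1\le a_{k}(x)\le M\ \text{otherwise}\bigr\}\subseteq F(2,\psi).
\]
Since $\log\psi(n_{j})=o(n_{j})$, the two large quotients per scale are dimensionally negligible, and the mass distribution principle applied to the natural measure (the equilibrium state of $-s\log|T'|$ for the $M$-bounded system on the free digits, uniform on the large ones) gives $\dim_{\mathrm{H}}\mathcal{C}_{M}\to1$ as $M\to\infty$.

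For $1<B<\infty$ the upper bound rests on covering $\{x:a_{k}(x)\ge\psi(n),\ a_{l}(x)\ge\psi(n)\}$ efficiently for $1\le k<l\le n$. Fixing $a_{1},\dots,a_{l-1}$ with $a_{k}\ge\psi(n)$, the set $\{a_{l}\ge\psi(n)\}\cap I_{l-1}(a_{1},\dots,a_{l-1})$ is a \emph{single interval} of length $\asymp\psi(n)^{-1}|I_{l-1}|$ (the tail cylinders merge), so the ``last'' large quotient costs only a factor $\psi(n)^{-s}$ in the $s$-content; the ``interior'' large quotient $a_{k}$ cannot be merged and costs $\sum_{a\ge\psi(n)}a^{-2s}\asymp\psi(n)^{-(2s-1)}$; the remaining $l-2$ digits contribute $\asymp e^{l\,P(s)}$ by the thermodynamic formalism. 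Hence $\mathcal{H}^{s}_{\infty}\bigl(\{a_{k},a_{l}\ge\psi(n)\}\bigr)\lesssim\psi(n)^{-(3s-1)}e^{l\,P(s)}$. Since $F(2,\psi)\subseteq\bigcup_{n\ge N}\bigcup_{k<l\le n}\{a_{k},a_{l}\ge\psi(n)\}$ for every $N$, using $\liminf_{n}\log\psi(n)/n=\log B$ and $P(s)\ge0$ on $(1/2,1]$ one gets $\mathcal{H}^{s}(F(2,\psi))\lesssim\sum_{n\ge N}n^{2}\psi(n)^{-(3s-1)}e^{n\,P(s)}$; for $s>s^{\ast}$ one has $P(s)<(3s-1)\log B$, so the summand decays exponentially and $\mathcal{H}^{s}(F(2,\psi))=0$. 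Therefore $\dim_{\mathrm{H}}F(2,\psi)\le s^{\ast}$.

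The lower bound for $1<B<\infty$ is the heart of the matter. Choose $M$ large and $(n_{j})$ rapidly increasing with $\log\psi(n_{j})/n_{j}\to\log B$ and $\sum_{i<j}n_{i}=o(n_{j})$, take $\mathcal{C}_{M}$ as above (now $\psi(n_{j})\approx B^{n_{j}}$), and put on it the measure $\tilde\mu$ built from the equilibrium state $\mu_{t}$ of $-t\log|T'|$ on the $M$-bounded system along the free blocks (a parameter $t$, at most the dimension $s_{M}$ of that system, to be optimised) and the uniform law on each large quotient. Evaluating $-\log\tilde\mu(I_{N}(x))$ and $-\log|I_{N}(x)|$ for $\tilde\mu$-typical $x$ shows that along cylinders of levels $N=n_{j}$, and inside free blocks, the local dimension is comfortably $\ge s^{\ast}$ for the optimal $t$. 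The delicate point is $\tilde\mu(B(x,r))\lesssim r^{s}$ at \emph{intermediate} radii, where a ball swallows a fan of cylinders $I_{n}(a_{1},\dots,a_{n-1},m)$ accumulating at an endpoint of $I_{n-1}$. The binding case is a ball around $x\in I_{n_{j}}(\dots,a_{n_{j}-1},a_{n_{j}})$ of radius $r\asymp\psi(n_{j})^{-1}|I_{n_{j}-1}|$: it meets $\asymp\psi(n_{j})$ of the cylinders $I_{n_{j}}(\dots,a_{n_{j}-1},m)$, each of mass $\asymp\psi(n_{j})^{-2}\tilde\mu(I_{n_{j}-2})$ (two uniform choices), so $\tilde\mu(B(x,r))\asymp\psi(n_{j})^{-1}\tilde\mu(I_{n_{j}-2})$, while $r^{s}\asymp\psi(n_{j})^{-3s}|I_{n_{j}-2}|^{s}$ (the extra $\psi(n_{j})^{-2}$ because $a_{n_{j}-1}$ already shrank $I_{n_{j}-1}$ inside $I_{n_{j}-2}$ by $\asymp a_{n_{j}-1}^{-2}$). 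Thus $\tilde\mu(B(x,r))\le Cr^{s}$ forces $\tilde\mu(I_{n_{j}-2})\le C\psi(n_{j})^{1-3s}|I_{n_{j}-2}|^{s}$, i.e. (taking logarithms, with $-\log\tilde\mu(I_{n_{j}-2})\sim n_{j}h_{\mu_{t}}$, $-\log|I_{n_{j}-2}|\sim n_{j}\lambda_{\mu_{t}}$, $\log\psi(n_{j})\sim n_{j}\log B$) the requirement $h_{\mu_{t}}-s\lambda_{\mu_{t}}\ge(3s-1)\log B$. Optimising the left side over $t$ turns it into $P_{M}(T,-s\log|T'|)$ (attained at $t=s$), so the construction admits every $s$ with $P_{M}(T,-s\log|T'|)\ge(3s-1)\log B$; hence $\dim_{\mathrm{H}}\mathcal{C}_{M}=s^{\ast}_{M}$ with $P_{M}(T,-s^{\ast}_{M}\log|T'|)=(3s^{\ast}_{M}-1)\log B$, and letting $M\to\infty$ gives $\dim_{\mathrm{H}}F(2,\psi)\ge s^{\ast}$.

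For $B=\infty$ one first checks $F(2,\psi)\subseteq E_{1}(\psi)$ for non-decreasing $\psi\to\infty$: if along the relevant $n$'s the index $l$ of the larger of the two big quotients stayed bounded, some $a_{l}$ would exceed $\psi(n)\to\infty$ for infinitely many $n$, which is impossible, so $a_{l}\ge\psi(n)\ge\psi(l)$ for infinitely many $l$. By the theorem of Wang and Wu, $\dim_{\mathrm{H}}E_{1}(\psi)=\tfrac{1}{1+b}$, whence $\dim_{\mathrm{H}}F(2,\psi)\le\tfrac{1}{1+b}$; the matching lower bound comes from the same Cantor construction as in Wang and Wu's case $B=\infty$, where cylinder lengths are governed by the doubly-exponential size of $\psi(n_{j})$, so inserting one or two large quotients per scale is immaterial. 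The principal obstacle throughout is the lower bound when $1<B<\infty$: an estimate using only cylinders of level $n_{j}$ would yield the \emph{wrong} (too large) exponent $4s-2$, and the correct $3s-1$ emerges only by applying the mass distribution principle to genuine balls, which requires identifying and bounding the fans of exponentially many cylinders absorbed by a single ball near the two large partial quotients. (The same scheme with $r$ large quotients per scale, one ``merged'' and $r-1$ ``summed'', produces the exponent $(2r-1)s-(r-1)$, giving the general $F(r,\psi)$.)
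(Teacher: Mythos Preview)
Your proposal is correct. The upper bound for $1<B<\infty$ and the treatment of $B=\infty$ follow essentially the same route as the paper's proof of its general $F(r,\psi)$ theorem (specialised to $r=2$): merge the tail $\{a_l\ge\psi(n)\}$ into a single interval contributing $\psi(n)^{-s}$, sum the interior large quotient for a factor $\psi(n)^{-(2s-1)}$, and control the free digits by the pressure; for $B=\infty$ both arguments rest on the {\L}uczak and Feng--Wu--Liang--Tseng sets (the paper invokes them directly, you via the inclusion $F(2,\psi)\subset E_1(\psi)$ and Wang--Wu). The genuine divergence is in the \emph{lower bounds}. For $1<B<\infty$ the paper does not construct a Cantor set or run the mass distribution principle at all: it quotes the Hussain--Shulga dimension formula for the auxiliary set $G_r(B,\mathbb A)=\{x:\ B^{n}\le a_{n-r+i}(x)<2B^{n},\ 1\le i\le r,\ \text{for i.m.\ }n\in\mathbb A\}$ as a black box, observes $G_2(B+\varepsilon,\mathbb A)\subset F(2,\psi)$ for the infinite set $\mathbb A=\{n:\psi(n)\le(B+\varepsilon)^n\}$, and lets $\varepsilon\to0$; for $B=1$ it again avoids a direct construction by replacing $\psi$ with $\delta^n\psi(n)$ (forcing a new $B=\delta>1$), applying the case just proved, and sending $\delta\to1$. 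Your hands-on approach is longer but self-contained, and it has the virtue of exposing \emph{why} the exponent is $3s-1$ rather than the naive $4s-2$: the fan of $\asymp\psi(n_j)$ admissible values of the last large digit is swallowed by a single ball, so that digit contributes $s$ while only the interior one contributes $2s-1$. The paper hides this mechanism inside the Hussain--Shulga citation; your analysis makes the extension to $r$ large quotients (exponent $s+(2s-1)(r-1)$), which is the paper's main dimension theorem, transparent.
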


Motivated by the preceding discussion, we investigate the set of points for which there are at least $r$ large partial quotients among the first $n$ terms for infinitely many sufficiently large $n$.
To be more precise,
for any $x \in [0,1)$, write
\[
L_n(x, \psi(n)) := \# \Big\{ k \leq n: a_k(x) \geq \psi(n) \Big\},
\]
where $\# A$ denotes the cardinality of the set $A$.
The quantity $L_n(x, \psi(n))$ counts the number of partial quotients among the first $n$ terms of $x$  that are greater than $\psi(n)$.
Fix an integer $r \geq 1$ and define
\[
F(r, \psi): =\Big\{ x \in [0,1): L_n(x, \psi(n)) \geq r\  \text{for i.m.}~n \in \mathbb{N} \Big\},
\]
or equivalently rewritting as
\[
\Big\{ x \in [0,1): \exists 1 \leq k_1< \cdots < k_r \leq n, a_{k_i} (x) \geq \psi(n)~(i=1, \ldots, r)\  \text{for i.m.}~n \in \mathbb{N} \Big\}.
\]
We provide a characterisation on the size of $F(r, \psi)$ in terms of both Lebesgue measure and Hausdorff dimension.

\begin{theorem}\label{leb}
Let $\psi$ be an arbitrary positive function defined on natural numbers.
Then
\begin{equation*}
  \mathcal{L}\big(F(r, \psi)\big)=\begin{dcases}
  		0,   & ~\text{if}~  \sum_{n=1}^{\infty} n^{r-1} \Big(\min_{m\geq n}\psi(m)\Big)^{-r} < \infty,\\
  	1,   & ~\text{if}~  \sum_{n=1}^{\infty} n^{r-1} \Big(\min_{m\geq n}\psi(m)\Big)^{-r} = \infty. \\
  \end{dcases}
\end{equation*}
\end{theorem}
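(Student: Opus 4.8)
The first step is to reduce to a non-decreasing $\psi$. Write $\Psi(n)=\min_{m\ge n}\psi(m)$, so that the series in the statement is exactly $\sum_n n^{r-1}\Psi(n)^{-r}$, and recall $F(r,\psi)=\limsup_n A_n(\psi)$ with $A_n(\psi)=\{x:L_n(x,\psi(n))\ge r\}$. Since $\Psi\le\psi$ pointwise we have $A_n(\psi)\subseteq A_n(\Psi)$, hence $F(r,\psi)\subseteq F(r,\Psi)$. If $\psi$ does not tend to infinity, then $\liminf_n\psi(n)=:L<\infty$, so $\Psi(n)\le L$ for all $n$, the series diverges, and by the Borel--Bernstein theorem almost every $x$ has at least $r$ partial quotients exceeding $\lfloor L\rfloor+1$, whose indices witness $x\in A_n(\psi)$ for the infinitely many $n$ with $\psi(n)\le\lfloor L\rfloor+1$; so $\mathcal{L}(F(r,\psi))=1$ and there is nothing more to prove. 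We may thus assume $\psi(n)\to\infty$; then the infimum defining $\Psi(n)$ is attained at some $m_n\ge n$, and the $r$ partial quotients witnessing $x\in A_n(\Psi)$ (all of index $\le n\le m_n$ and of size $\ge\Psi(n)=\psi(m_n)$) witness $x\in A_{m_n}(\psi)$, so $F(r,\Psi)\subseteq F(r,\psi)$. It is therefore enough to prove the theorem for a non-decreasing $\psi$ with $\psi(n)\to\infty$. Throughout I will use the quasi-independence estimates
\[
c^{p}\prod_{i=1}^{p}t_i^{-1}\ \le\ \mathcal{L}\Big(\bigcap_{i=1}^{p}\{x:a_{k_i}(x)\ge t_i\}\Big)\ \le\ C^{p}\prod_{i=1}^{p}t_i^{-1},
\]
valid for any $k_1<\cdots<k_p$ and any $t_1,\dots,t_p\ge1$, with $c,C$ absolute constants; these follow from the bounded distortion of the Gauss map.

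For the convergence half, a union bound over the $\binom nr$ position choices inside $A_n(\psi)$ overshoots by a factor of $n$, so I would reorganise dyadically. Put
\[
\widetilde A_j=\Big\{x:\exists\,k_1<\cdots<k_r\le 2^{j},\ a_{k_i}(x)\ge\psi(2^{j-1})\ \ (1\le i\le r)\Big\}.
\]
Monotonicity of $\psi$ gives $A_n(\psi)\subseteq\widetilde A_j$ whenever $2^{j-1}<n\le 2^j$, hence $F(r,\psi)\subseteq\limsup_j\widetilde A_j$. The estimate above yields $\mathcal{L}(\widetilde A_j)\le\binom{2^j}{r}\big(C/\psi(2^{j-1})\big)^r\ll_r 2^{jr}\psi(2^{j-1})^{-r}$, and an elementary comparison using that $\psi$ is non-decreasing shows $\sum_j 2^{jr}\psi(2^{j-1})^{-r}\asymp\sum_n n^{r-1}\psi(n)^{-r}<\infty$. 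The first Borel--Cantelli lemma then gives $\mathcal{L}(F(r,\psi))=0$.

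For the divergence half I would build a full-measure subset of $F(r,\psi)$ from events supported on pairwise disjoint blocks of partial quotients. Fix an absolute constant $C_0$, set $\ell_j=\min\{2^{j-1},\lceil\psi(2^j)/C_0\rceil\}$, and let
\[
B_j=\Big\{x:\#\big\{2^{j-1}<k\le 2^{j-1}+\ell_j:\ a_k(x)\ge\psi(2^j)\big\}\ge r\Big\}.
\]
The witnessing indices lie in $(2^{j-1},2^j]$ and carry partial quotients $\ge\psi(2^j)$, so $B_j\subseteq A_{2^j}(\psi)$ and $\limsup_j B_j\subseteq\limsup_n A_n(\psi)=F(r,\psi)$. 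A Bonferroni inclusion--exclusion on the $\binom{\ell_j}{r}$ events $\bigcap_{k\in S}\{a_k\ge\psi(2^j)\}$, with $S$ running over $r$-subsets of the $j$-th block — the choice of $\ell_j$ keeps $\ell_j/\psi(2^j)$ bounded, so the second-order terms are controlled by the first — together with the quasi-independence estimates, gives $\mathcal{L}(B_j)\gg_r\min\{1,2^{jr}\psi(2^j)^{-r}\}$, and summing (again via the dyadic comparison) shows $\sum_j\mathcal{L}(B_j)=\infty$. Because $B_i$ and $B_j$ for $i\ne j$ depend on disjoint blocks of partial quotients, bounded distortion gives $\mathcal{L}(B_i\cap B_j)\le K\,\mathcal{L}(B_i)\mathcal{L}(B_j)$ for an absolute constant $K$, so the Kochen--Stone quantitative Borel--Cantelli lemma yields $\mathcal{L}(\limsup_j B_j)\ge 1/(K+1)>0$. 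Finally, $\bigcup_{j\ge J}B_j$ depends only on the partial quotients of index $>2^{J-1}$, so $\limsup_j B_j$ lies in the tail $\sigma$-algebra $\bigcap_{N\ge1}T^{-N}\mathcal{B}$ of the Gauss map; since $T$ is exact with respect to the Gauss measure, this tail is trivial, which forces $\mathcal{L}(\limsup_j B_j)=1$, and hence $\mathcal{L}(F(r,\psi))=1$.

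The crux of the argument, and the one place where substantially more than a routine Borel--Cantelli is needed, is the divergence half: the events witnessing $F(r,\psi)$ must simultaneously (i) be supported on disjoint index-blocks, so that mixing of the Gauss map and the ensuing zero--one law apply, and (ii) have total measure comparable to $\sum_n n^{r-1}\psi(n)^{-r}$ rather than to $\sum_n n^{r}\psi(n)^{-r}$. This is exactly what forces the dyadic blocking and the truncation of the block length to $\asymp\min\{2^{j-1},\psi(2^j)\}$, which in turn is precisely what makes the Bonferroni lower bound for $\mathcal{L}(B_j)$ go through. A secondary technical point is the reduction to a monotone $\psi$, which needs $\min_{m\ge n}\psi(m)$ to be attained, and hence the separate (easy) treatment of the case $\psi(n)\not\to\infty$.
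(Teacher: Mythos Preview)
Your argument is correct. The reduction to non-decreasing $\psi$ and the convergence half are essentially the paper's proof (dyadic blocking followed by the first Borel--Cantelli lemma); your separate treatment of the case $\psi\not\to\infty$ in fact patches a small gap in the paper's Proposition~\ref{p:mon}, whose proof tacitly assumes that $\min_{m\ge n}\psi(m)$ is attained.

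The divergence half, however, is organised quite differently. The paper first replaces $\psi$ by a larger function $\phi\ge\psi$ with $n\phi(n)^{-1}\to 0$ (Lemma~\ref{NF}), so that the Bonferroni second-order term is $o(1)$ relative to the first on the full dyadic block; it then imposes a separation condition on the positions $k_1,\dots,k_r$ and invokes the exponential mixing of the Gauss map (Proposition~\ref{MP}) to obtain the sharp quasi-independence $\mu(F_m\cap F_n)\le(1+o(1))\mu(F_m)\mu(F_n)$, from which $\mu(\limsup F_m)\ge 1/(1+\epsilon)$ follows directly via Lemma~\ref{l:quasiind} for every $\epsilon>0$. Your route instead keeps $\psi$ fixed but truncates the block length to $\ell_j\asymp\min\{2^{j-1},\psi(2^j)\}$ in order to control the Bonferroni remainder, settles for the crude bound $\mathcal{L}(B_i\cap B_j)\le K\,\mathcal{L}(B_i)\mathcal{L}(B_j)$ coming from bounded distortion on disjoint index blocks, and then upgrades the resulting positive measure to full measure via the exactness (tail-triviality) of the Gauss map. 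Your approach is the more elementary of the two---it avoids both the auxiliary function $\phi$ and the separation-index bookkeeping---at the cost of taking the zero--one law as an external input; the paper's argument is self-contained in that respect but needs the finer mixing estimates of Proposition~\ref{Separated}.
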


We also determine the Hausdorff dimension of the set $F(r, \psi)$:
\begin{theorem}\label{Hausdorff}
	Let $\psi$ be an arbitrary positive function defined on natural numbers.
	Write
\begin{equation*}
		\log B: = \liminf_{n \to \infty} \frac{\log \Big(\min_{m\geq n}\psi(m)\Big) }{n}~ \text{and}
	~\log b :=  \liminf_{n \to \infty} \frac{\log \log \Big(\min_{m\geq n}\psi(m)\Big)}{n}.
\end{equation*}
	Then
	\begin{equation*}
		\dim_{\mathrm{H}} F(r, \psi)=\begin{cases}
			1,  &\text{ if } B=1,\\
			\inf \Big\{ s \geq 0: P\Big(T, -s \log |T'|-\big(s+(2s-1)(r-1)\big) \log B \Big) \leq 0 \Big\},   & \text{ if } 1 < B < \infty,  \\
				\dfrac{1}{1+b},   &\text{ if }  B= \infty.
		\end{cases}
	\end{equation*}
\end{theorem}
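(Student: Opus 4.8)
We organise the argument by the value of $B$, after a preliminary reduction. Since $\psi(n)\ge\widetilde\psi(n):=\min_{m\ge n}\psi(m)$ gives $F(r,\psi)\subseteq F(r,\widetilde\psi)$, while (excluding the trivial case $\liminf\psi<\infty$, which forces $B=1$ and full measure) the minimum defining $\widetilde\psi(n)$ is attained at some $m_n\ge n$ with $m_n\to\infty$, whence $F(r,\widetilde\psi)\subseteq F(r,\psi)$, one has $F(r,\psi)=F(r,\widetilde\psi)$; so we may assume $\psi$ is non-decreasing, and then $\log B=\liminf_n\frac{\log\psi(n)}{n}$, $\log b=\liminf_n\frac{\log\log\psi(n)}{n}$, and moreover $F(r,\psi)\subseteq F(1,\psi)\subseteq E_1(\psi)$ (using monotonicity of $\psi$ and that the witnessing index must tend to infinity). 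The two extreme cases are short: when $B=1$ the inclusion $F(r,\psi)\subseteq E_1(\psi)$ and Wang–Wu give $\dim_{\mathrm H}F(r,\psi)\le 1$, while the reverse bound follows by taking a very lacunary $n_j$ with $\log\psi(n_j)/n_j\to 0$ and the Cantor set $K_M$ of points whose partial quotients lie in $\{1,\dots,M\}$ off the blocks $\{n_j-r+1,\dots,n_j\}$ and in $[\psi(n_j),2\psi(n_j))$ on them — the forced blocks being so sparse and carrying such sub-exponential weight that the equilibrium state of $-s_M\log|T'|$ on the $M$-shift, spliced across the blocks, certifies $\dim_{\mathrm H}K_M\ge s_M-o(1)$ with $s_M=\dim_{\mathrm H}\{x:a_k(x)\le M\ \forall k\}\to1$. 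When $B=\infty$, again $F(r,\psi)\subseteq E_1(\psi)$ and Wang–Wu give $\dim_{\mathrm H}F(r,\psi)\le\frac1{1+b}$, and the matching lower bound comes from Wang–Wu's extremal construction for $E_1(\psi)$ with each single large partial quotient replaced by a block of $r$ consecutive ones in $[\psi(n_j),2\psi(n_j))$: since $\frac{\log\log\psi(n_j)^r}{n_j}=\frac{\log(r\log\psi(n_j))}{n_j}\to\log b$, the doubly-exponential rate governing $\frac1{1+b}$ is untouched by the constant $r$.

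For the upper bound when $1<B<\infty$, fix $s$ with $P(T,-s\log|T'|-(s+(2s-1)(r-1))\log B)<0$; necessarily $s>1/2$. Since $F(r,\psi)\subseteq\bigcup_{n\ge N}\bigcup_{1\le k_1<\cdots<k_r\le n}\{x:a_{k_i}(x)\ge\psi(n)\ (i=1,\dots,r)\}$ for every $N$, it suffices to cover each such set economically. Fixing $n$ and $k_1<\cdots<k_r\le n$, cover $\{x:a_{k_i}(x)\ge\psi(n)\}$ by the intervals $J$ obtained by prescribing $a_i$ for $i<k_r$, $i\notin\{k_1,\dots,k_{r-1}\}$, prescribing $a_{k_1},\dots,a_{k_{r-1}}\in[\psi(n),\infty)$, and letting $a_{k_r}$ range over $[\psi(n),\infty)$; each $J$ is an interval of length $\asymp(\psi(n)\,q_{k_r-1}^2)^{-1}$. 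Using $q_{k_r-1}\asymp\big(\prod_{j<r}a_{k_j}\big)\prod_\ell Q_\ell$, where the $Q_\ell$ are the denominators of the $r$ free blocks delimited by the forced positions, together with $\sum_{a\ge\psi(n)}a^{-2s}\asymp\psi(n)^{1-2s}$ and $\sum_{a_1,\dots,a_m}q_m^{-2s}\asymp e^{mP(T,-s\log|T'|)}$, one obtains $\sum_J|J|^s\lesssim n^{\,r-1}\,\psi(n)^{-(s+(2s-1)(r-1))}\,e^{nP(T,-s\log|T'|)}$; summing over the $\le n^r$ tuples and over $n\ge N$, and using $\psi(n)\ge B^{(1-\varepsilon)n}$ for large $n$, the tail is dominated by a convergent geometric series tending to $0$ as $N\to\infty$. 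Hence $\mathcal H^s(F(r,\psi))=0$ and $\dim_{\mathrm H}F(r,\psi)\le s$; letting $s$ decrease to the infimum finishes the upper bound. (The reason the exponent is $s+(2s-1)(r-1)$ and not $r(2s-1)$ is visible here: the last large quotient is absorbed into an interval of length $\asymp(\psi(n)q_{k_r-1}^2)^{-1}$, contributing $\psi(n)^{-s}$, whereas the other $r-1$ remain honest cylinders, contributing $\psi(n)^{1-2s}$ apiece.)

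For the lower bound when $1<B<\infty$, fix $s<\inf\{s:P(T,-s\log|T'|-(s+(2s-1)(r-1))\log B)\le0\}$, so the pressure is positive at $s$, and choose a lacunary $n_j\uparrow\infty$ with $\psi(n_j)\le B^{(1+\varepsilon)n_j}$. Following, and generalising to $r$ partial quotients, the construction of Tan, Tian and Wang for $r=2$, build a Cantor set $K\subseteq F(r,\psi)$ by prescribing, on each block $\{n_j-r+1,\dots,n_j\}$, that $r-1$ of the partial quotients lie in $[\psi(n_j),\infty)$ and one lies in $[\psi(n_j),\psi(n_j)^{1+\varepsilon})$, and, off the blocks, that $a_k$ lies in a finite alphabet that is ultimately exhausted to all of $\mathbb N$; equip $K$ with the measure obtained by combining, stretch by stretch, the equilibrium state of $-s\log|T'|$ on the free portions with the conformal weights $a^{-2s}$ on the forced coordinates. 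Using the quasi-multiplicativity of $q_n$ to decompose each relevant prefix as $r$ free blocks, $r-1$ honest large cylinders, and one "interval'' large quotient — exactly mirroring the covering above — one verifies by the mass distribution principle that $\mu(B(x,\delta))\lesssim\delta^{s'}$ at every scale, with $s'\to s$ as the alphabet exhausts $\mathbb N$; so $\dim_{\mathrm H}F(r,\psi)\ge\dim_{\mathrm H}K\ge s'$, and letting $s'\uparrow$ and then $s\uparrow$ the infimum yields equality.

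The main obstacle is precisely this last step. The auxiliary measure inevitably over-concentrates on the cylinders immediately following a forced block, since forcing $r$ partial quotients to be large shrinks cylinders by roughly $\psi(n_j)^{-2r}$ while no allocation of mass can compensate more than a factor $\psi(n_j)^{r(1-2s)}$; controlling this requires both the lacunarity of $\{n_j\}$ (so the free stretches dilute the concentration before the next block) and the careful book-keeping that treats one of the $r$ large quotients as an interval and the remaining $r-1$ as cylinders, so that the mass distribution estimate at the post-block scales closes with exponent $s+(2s-1)(r-1)$ rather than the weaker $r(2s-1)$. Making all constants uniform in $j$, and checking the estimate at the intermediate "squeeze'' scales between levels $n_j-r$ and $n_j$, is the delicate part of the proof.
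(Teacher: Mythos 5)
Your overall architecture matches the paper's: reduce to non-decreasing $\psi$ via $\widetilde\psi(n)=\min_{m\ge n}\psi(m)$, then split on $B$. Your upper bound for $1<B<\infty$ is essentially the paper's argument: rewrite $F(r,\psi)$ as a limsup over $n$ of sets where the \emph{last} large partial quotient sits at position $n$, absorb that quotient into a single interval of length $\asymp(\psi(n)q_{n-1}^2)^{-1}$ (contributing $\psi(n)^{-s}$), keep the other $r-1$ as cylinder conditions (each contributing $\sum_{a\ge\psi(n)}a^{-2s}\asymp\psi(n)^{1-2s}$ after factoring $a_{k_1}\cdots a_{k_{r-1}}$ out of $q_{n-1}$ via the quasi-multiplicativity of denominators), and kill the $n^r$ combinatorial factor with the strict negativity of the pressure. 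Your parenthetical explanation of why the exponent is $s+(2s-1)(r-1)$ rather than $r(2s-1)$ is exactly the mechanism the paper uses. Your treatment of $B=\infty$ (upper bound via $F(r,\psi)\subseteq E_1(\psi)$ and Wang--Wu, lower bound via blocks of $r$ consecutive huge quotients) is sound and, for the lower bound, arguably cleaner than the paper's inclusion of $\{x:a_n(x)\ge e^{d^n}\ \forall n\gg1\}$, which needs a small index shift to actually produce $r$ indices $k\le n$ with $a_k\ge\psi(n)$.

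The genuine divergence, and the genuine gap, is the lower bound for $1<B<\infty$. The paper does not build a Cantor set at all: it quotes the Hussain--Shulga theorem on the sets $G_r(\bm\alpha,\mathbb A)$ of points with $r$ \emph{consecutive} exponentially large partial quotients along an arbitrary infinite subsequence $\mathbb A$, specializes $A_i=B$, $c_i=B^{r-1}$ so that $G_r(B+\varepsilon,\mathbb A)\subseteq F(r,\psi)$ with $\mathbb A=\{n:\psi(n)\le(B+\varepsilon)^n\}$, and checks that $\min_{0\le i\le r-1}d_i=d_{r-1}=s(r,B+\varepsilon)$ because $s>1/2$ makes the potential decreasing in $i$. (Your $B=1$ case is likewise obtained in the paper by perturbing $\psi$ to $\delta^n\psi(n)$ and invoking this same case, rather than by a separate construction.) You instead propose to carry out the Cantor set and mass distribution argument from scratch, and your final paragraph candidly identifies --- but does not resolve --- the decisive difficulty: bounding $\mu(B(x,\delta))$ at the ``squeeze'' scales immediately after a forced block, where cylinders have shrunk by $\psi(n_j)^{-2r}$ but the measure can only be diluted by $\psi(n_j)^{r(1-2s)}$. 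That estimate, made uniform in $j$ and at all intermediate scales between levels $n_j-r$ and $n_j$, is precisely the content of the cited theorem and is where the exponent $s+(2s-1)(r-1)$ is actually earned on the lower-bound side; asserting ``one verifies by the mass distribution principle that $\mu(B(x,\delta))\lesssim\delta^{s'}$'' does not discharge it. So the proposal is a correct and well-motivated plan, with the same upper bound as the paper, but the lower bound for $1<B<\infty$ (and, as written, for $B=1$) remains a sketch: either carry out the multi-scale H\"older estimate in full, or do what the paper does and reduce to the known dimension formula for $G_r(\bm\alpha,\mathbb A)$.
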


\begin{remark}
(1) Compared to the results in \cite{Tan23}, our setting does not require the function $\psi$ to be monotone.
In fact, let $\widetilde{\psi}(n) = \min_{m \geq n} \psi(m)$.
Then $\widetilde{\psi}$ is a non-decreasing function, and by Proposition \ref{p:mon}, the set $F(r, \psi)$ remains unchanged if we replace $\psi$ with $\widetilde{\psi}$.

(2q) Regarding the measure-theoretic results, although Tan, Tian, and Wang \cite{Tan23} noted in Remark 1.5 that their results can be extended to the case when $r > 2$, they did not provide an explicit 0-1 measure criterion for this setting. Moreover, their formula (see Theorem 1.5) does not clearly indicate how the exponent of $n$ depends on $r$. In contrast, our approach not only establishes precise criteria for all $r \geq 2$ (see Theorem \ref{leb}), but also employs an independent method.
\end{remark}

The structure of the paper is as follows. Section 2 introduces the necessary definitions and some preliminary results.
The proofs of Theorems \ref{leb} and \ref{Hausdorff} are presented in Sections 3 and 4, respectively.

\section{\emph{Preliminaries}}

In this section, we recall some basic properties of continued fractions and establish some basic facts.

Throughout this paper, we list the following notation:
\begin{itemize}
	\item a sequence of integers will be denoted by letters in boldface: $\bm a, \bm b, \ldots$,
	\item for two variables  $\alpha$ and $\beta$, the notation $\alpha \lesssim \beta$ means that $\beta \leq c \alpha $ for some unspecified constant $c$, and the notation $\alpha \asymp \beta $  means that $\alpha \lesssim \beta$ and $\beta \lesssim \alpha$.
\end{itemize}

\subsection{Continued fraction}\

Let $T: [0,1) \to [0,1)$ be the Gauss map, defined as
\begin{equation*}
 T(x)=\begin{dcases}
 	0,   &  x=0,\\
 	\frac{1}{x}- \bigg\lfloor  \frac{1}{x}\bigg\rfloor,   & x \in (0,1).
 \end{dcases}
\end{equation*}
For $x \in (0,1)$, take $a_1(x) = \lfloor 1 / x \rfloor$ and $a_{n+1} (x) = a_1 (T^n (x))$ for $ n \geq1$, which are called the partial~quotients of $x$.
Then $x$ can be written as its continued fraction expansion (see \eqref{CE}).
It is known that the continued fraction dynamical system $([0,1), T,  \mu) $ is ergodic (see \cite{Iosifescu02}), where $\mu$ is the Gauss measure given by
\[
d \mu: = \frac{1} {(1+x) \log 2 } dx.
\]
It is clear that $\mu$ is equivalent to the Lebesgue measure $\mathcal{L}$.

A finite truncation of the expansion of $x$ gives the rational $p_n/q_n = [a_1, a_2, \ldots, a_n]$, which is called the $n$th convergent of $x$.
With the conventions
\[
p_{-1} =1, \quad q_{-1}=0, \quad p_0 =1 \quad \text{and} \quad q_0=1,
\]
the sequences $p_n=p_n(x)$, $q_n=q_n(x)$ can be generated by the following recursive relation \cite{Khintchine63}:
\begin{equation}\label{pn}
  p_{n+1} = a_{n+1}(x) p_n + p_{n-1},  \quad q_{n+1} = a_{n+1}(x) q_n+q_{n-1}, \quad n \geq 0.
\end{equation}
Clearly, $p_n= p_n(x)$ and $q_n=q_n(x)$ are determined by the partial quotients $a_1, \ldots, a_n$,
so we may write
\[
p_n =p_n(a_1, \ldots, a_n), \quad q_n=q_n(a_1, \ldots, a_n).
\]
When it is clear which partial quotients are involved, we denote them by $p_n, q_n$ for simplicity.

For any $n \geq 1$, let $\mathbf{a}=(a_1,a_2,\ldots, a_n) \in \mathbb{N}^{n}$ be a positive integer vector, and we call
\[
I_k (\mathbf{a}) :=\{ x \in [0,1]: a_1(x)=a_1, a_2(x) =a_2, \ldots, a_n(x)=a_n\}
\]
a cylinder of order $n$ of continued fractions, i.e., the set of all real numbers in $[0,1]$ whose continued fraction expansions begin with $(a_1,a_2,\ldots, a_n) $.

We will frequently use the following well-known properties of continued fraction expansions.
These properties are discussed in  \cite{Iosifescu02, Khintchine63, Wu2006}.
\begin{proposition}\label{cylinder}
For any integer vector $\bm a=(a_1,a_2,\ldots, a_n)\in\mathbb N^n$, let $p_n=p_n(\bm a), q_n=q_n(\bm a)$ be defined in \eqref{pn}.
\begin{itemize}
    \item[(1)]
    \begin{equation*}
      I_n (\bm a) =\begin{dcases}
      	\bigg[\frac{p_n}{q_n}, \frac{p_n+p_{n-1}}{q_n+q_{n-1}} \bigg), &  \text{if }n\text{ is even},  \\
      	\bigg( \frac{p_n+p_{n-1}}{q_n+q_{n-1} }, \frac{p_n}{q_n} \bigg], &  \text{if }n\text{ is odd}.
      \end{dcases}
    \end{equation*}
  The length of $I_n (\bm a)$ is bounded by the inequalities
    \[
    \frac{1}{2 q_n^2} \leq |I_n (\bm a)|= \frac{1}{q_n (q_n+q_{n-1})} \leq \frac{1}{q_n^2}.
    \]
    \item[(2)]
    For $\bm b=(b_1,\dots,b_k)\in\mathbb N^k$, we have
    \[
    q_n(\bm a) \cdot q_k(\bm b)\le q_{n+k}(\bm a,\bm b) \le 2 q_n(\bm a) \cdot q_k(\bm b).
    \]
    \item[(3)]
    \[
    \frac{a_k+1}{2} \leq \frac{q_n(a_1, \cdots, a_n)}{q_{n-1}(a_1, \cdots,a_{k-1}, a_{k+1}, \cdots, a_n)} < a_k+1.
    \]
 \end{itemize}
\end{proposition}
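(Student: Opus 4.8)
The plan is to derive all three items from the matrix/continuant encoding of the recurrence \eqref{pn}, since each assertion is fundamentally a statement about the denominators $q_n$ and their behaviour under concatenation and deletion of a symbol. The one tool I would set up first is the \emph{fundamental identity}: for $x=[a_1,a_2,\dots]\in(0,1)$ and every $n\ge1$,
\[
x=\frac{p_n+p_{n-1}\,T^n x}{q_n+q_{n-1}\,T^n x},\qquad T^n x\in[0,1),
\]
together with the determinant relation $p_{n-1}q_n-p_nq_{n-1}=(-1)^{n}$; both follow by a one-line induction from \eqref{pn}. It is also convenient to package \eqref{pn} as a product of transfer matrices $M(a)=\bigl(\begin{smallmatrix}a&1\\1&0\end{smallmatrix}\bigr)$, so that the convergent data of a string is the corresponding matrix product and concatenation of strings becomes matrix multiplication.

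For (1), I would observe that $x\in I_n(\bm a)$ exactly when the tail $T^nx$ ranges over $[0,1)$. The Möbius map $t\mapsto(p_n+p_{n-1}t)/(q_n+q_{n-1}t)$ has derivative $(p_{n-1}q_n-p_nq_{n-1})/(q_n+q_{n-1}t)^2=(-1)^n/(q_n+q_{n-1}t)^2\ne0$ of constant sign, hence is strictly monotone; therefore $I_n(\bm a)$ is the interval with endpoints $p_n/q_n$ (at $t=0$) and $(p_n+p_{n-1})/(q_n+q_{n-1})$ (as $t\to1$), the orientation being governed by the parity of $n$ through the sign of the determinant. The length is then
\[
|I_n(\bm a)|=\left|\frac{p_n}{q_n}-\frac{p_n+p_{n-1}}{q_n+q_{n-1}}\right|
=\frac{|p_nq_{n-1}-p_{n-1}q_n|}{q_n(q_n+q_{n-1})}=\frac{1}{q_n(q_n+q_{n-1})},
\]
and since partial quotients are $\ge1$ we have $q_{n-1}\le q_n$, whence $q_n^2\le q_n(q_n+q_{n-1})\le 2q_n^2$, which gives the displayed two-sided bound.

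For (2), I would use the continuant concatenation identity
\[
q_{n+k}(\bm a,\bm b)=q_n(\bm a)\,q_k(\bm b)+q_{n-1}(\bm a)\,q_{k-1}(b_2,\dots,b_k),
\]
which is immediate from multiplying the matrix products of $\bm a$ and $\bm b$ and reading off the denominator entry. The lower bound $q_n(\bm a)q_k(\bm b)\le q_{n+k}(\bm a,\bm b)$ is obtained by discarding the nonnegative second term; the upper bound follows from the monotonicities $q_{n-1}(\bm a)\le q_n(\bm a)$ and $q_{k-1}(b_2,\dots,b_k)\le q_k(\bm b)$, which bound the second term by $q_n(\bm a)q_k(\bm b)$ and hence give $q_{n+k}(\bm a,\bm b)\le 2q_n(\bm a)q_k(\bm b)$.

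Item (3) is the most delicate and is where I expect the real work. Splitting the matrix product at position $k$, write $L=M(a_1)\cdots M(a_{k-1})$ and $R=M(a_{k+1})\cdots M(a_n)$, let $\gamma=q_{k-1}(a_1,\dots,a_{k-1})\ge\delta=q_{k-2}(a_1,\dots,a_{k-2})$ be the denominators carried by $L$, and let $e\ge g$ be the numerator and denominator carried by $R$. Reading off the denominator entry of $L\,M(a_k)\,R$ and of $L\,R$ yields
\[
q_n(a_1,\dots,a_n)=a_k\,\gamma e+\delta e+\gamma g,\qquad
q_{n-1}(a_1,\dots,a_{k-1},a_{k+1},\dots,a_n)=\gamma e+\delta g.
\]
The upper bound reduces to $(a_k+1)q_{n-1}-q_n=a_k\delta g+(e-g)(\gamma-\delta)>0$, both summands being nonnegative and $a_k\delta g>0$. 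The lower bound is the genuinely two-sided, tight estimate (the constant $1/2$ is attained in the limit where both ratios $\gamma/\delta$ and $e/g$ are large), and a crude bound loses it; the trick is to spend one factor of $a_k\ge1$ on the leading term, obtaining
\[
2q_n-(a_k+1)q_{n-1}\ge(\gamma e-\delta g)+\bigl(e(2\delta-\gamma)+g(2\gamma-\delta)\bigr)=2\bigl(\delta(e-g)+\gamma g\bigr)\ge0,
\]
which is exactly $q_n/q_{n-1}\ge(a_k+1)/2$. The degenerate cases $k=1$ (where $\delta=0$) and $k=n$ are handled directly from \eqref{pn}.
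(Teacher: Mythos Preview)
The paper does not actually prove this proposition: it states the three items as ``well-known properties of continued fraction expansions'' and simply cites Iosifescu--Kraaikamp, Khintchine, and (for item~(3)) Wu. Your matrix/continuant derivation is exactly the standard route one finds in those references, so you are supplying the argument the paper elected to omit, and your reasoning for all three items is correct.

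One small remark on item~(3): the strict upper bound $<a_k+1$ as printed can degenerate to equality at the boundary positions $k=1$ or $k=n$; for instance with $n=2$, $k=1$, $a_2=1$ one has $q_2(a_1,1)/q_1(1)=(a_1+1)/1=a_1+1$. Your proof correctly identifies that strictness comes from the summand $a_k\delta g$, which vanishes when $\delta=q_{k-2}=0$, so your deferral of the ``degenerate cases $k=1$ and $k=n$'' to a direct check is honest --- just be aware that in those cases only $\le a_k+1$ is available in general. This is an imprecision in the stated proposition rather than a flaw in your argument, and it is harmless for the paper's applications, which use only the two-sided bound $q_n\asymp a_k\, q_{n-1}(\dots,\widehat{a_k},\dots)$.
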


The following result states that the Gauss map has an exponential mixing property with respect to the Gauss measure, which enables us to conclude a dynamical Borel-Cantelli lemma.
\begin{proposition}[\cite{Philipp67}]\label{MP}
There exist constants $C>0$ and $0< \gamma <1$ with the following property.
Fix $\mathbf{a}=(a_1,a_2,\ldots, a_k) \in \mathbb{N}^{k}$, and let $F \subset [0,1]$ be any measurable set.
Then for all $n \geq k+1$,
\[
\big|\mu(I_k (\mathbf{a}) \cap T^{-n} F) -\mu(I_k (\mathbf{a})) \mu(F) \big|  \leq C \mu(I_k (\mathbf{a})) \mu(F) \gamma^{\sqrt{n-k}}
\]
where $\mu$ is the Gauss measure.
\end{proposition}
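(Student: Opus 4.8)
The plan is to prove the estimate through the transfer (Perron--Frobenius) operator of the Gauss map and its spectral gap, while keeping careful track of the normalisation so that the \emph{product} $\mu(I_k)\mu(F)$ — not a single factor — appears on the right. Write $h(x)=\frac{1}{(1+x)\log 2}$ for the Gauss density, so that $d\mu=h\,dx$, and let $\mathcal P$ denote the Lebesgue transfer operator of $T$,
\[
(\mathcal P f)(x)=\sum_{a=1}^{\infty}\frac{1}{(a+x)^2}\,f\!\Big(\frac{1}{a+x}\Big),
\]
which satisfies $\mathcal P h=h$. The $\mu$-normalised operator $\widehat{\mathcal P}f:=h^{-1}\mathcal P(hf)$ then obeys $\widehat{\mathcal P}\mathbf 1=\mathbf 1$ and the duality
\[
\int_0^1 (u\circ T^n)\,v\,d\mu=\int_0^1 u\,(\widehat{\mathcal P}^{\,n}v)\,d\mu .
\]
Taking $u=\mathbf 1_F$ and $v=\mathbf 1_{I_k(\mathbf a)}$ reduces the left-hand side of the Proposition to $\int_0^1 \mathbf 1_F\,\widehat{\mathcal P}^{\,n}\mathbf 1_{I_k(\mathbf a)}\,d\mu$, and since $\int \widehat{\mathcal P}^{\,n}\mathbf 1_{I_k}\,d\mu=\mu(I_k)$, the whole task becomes the uniform pointwise estimate $\big\|\widehat{\mathcal P}^{\,n}\mathbf 1_{I_k}-\mu(I_k)\big\|_{\infty}\lesssim \mu(I_k)\,\rho^{\,n-k}$ for some $\rho<1$; integrating this against $\mathbf 1_F\,d\mu$ contributes exactly the factor $\mu(F)$ and yields $\big|\mu(I_k\cap T^{-n}F)-\mu(I_k)\mu(F)\big|\le C\mu(I_k)\mu(F)\rho^{\,n-k}$.

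The first step is to smooth out the indicator $\mathbf 1_{I_k(\mathbf a)}$, which is necessary because the spectral estimate is available only on a space of regular functions. Since $I_k(\mathbf a)$ is a cylinder of order $k$ and we assume $n\ge k+1$, I would apply the operator $k$ times first: using the inverse branch $\phi_{\mathbf a}(x)=\frac{p_k+p_{k-1}x}{q_k+q_{k-1}x}$ and $|p_kq_{k-1}-p_{k-1}q_k|=1$, a direct computation gives
\[
\mathcal P^{k}\mathbf 1_{I_k(\mathbf a)}(x)=|\phi_{\mathbf a}'(x)|=\frac{1}{(q_k+q_{k-1}x)^2},
\]
so that $g:=\widehat{\mathcal P}^{\,k}\mathbf 1_{I_k(\mathbf a)}=h^{-1}\,\big(|\phi_{\mathbf a}'|\cdot h\circ\phi_{\mathbf a}\big)$ is a smooth, strictly positive function on $[0,1]$. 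The bounded-distortion bounds of Proposition \ref{cylinder} show that $g$ is comparable to $|I_k(\mathbf a)|\asymp\mu(I_k)$ uniformly in $x$ and $\mathbf a$, that its $C^1$-seminorm is $\lesssim\mu(I_k)$, and that $\int g\,d\mu=\mu(I_k)$. Thus after the first $k$ iterates one is left with a regular density all of whose relevant norms scale precisely like the measure of the cylinder.

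The second step is the spectral gap. It remains to show $\big\|\widehat{\mathcal P}^{\,m}g-\big(\int g\,d\mu\big)\mathbf 1\big\|_\infty\lesssim \rho^m\,\|g\|_{C^1}$ with $m=n-k$. This is the content of the Gauss--Kuzmin--Wirsing theorem: $\widehat{\mathcal P}$ satisfies a Lasota--Yorke inequality $\|\widehat{\mathcal P}^{\,m}g\|_{C^1}\le \theta^m\|g\|_{C^1}+C\|g\|_{L^1(\mu)}$, hence is quasi-compact on $C^1([0,1])$, with $1$ a simple leading eigenvalue (eigenfunction $\mathbf 1$, by the ergodicity recalled in Section 2.1) and the rest of the spectrum inside a disc of radius $\rho<1$. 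Decomposing $g$ into its projection onto $\mathbf 1$, equal to $\mu(I_k)\mathbf 1$, plus a remainder in the complementary invariant subspace gives the claimed geometric decay. Combining the two steps yields $\big|\mu(I_k\cap T^{-n}F)-\mu(I_k)\mu(F)\big|\le C\mu(I_k)\mu(F)\rho^{\,n-k}$, and since $\rho^{\,m}\le C'\gamma^{\sqrt m}$ for any fixed $0<\gamma<1$ (with $C'$ absorbing small $m$), the stated stretched-exponential bound follows a fortiori.

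The main obstacle is obtaining the factor $\mu(F)$ — that is, the genuine $\psi$-mixing form with a product of measures, rather than the weaker decay-of-correlations bound $\lesssim\mu(I_k)\rho^{\,n-k}$ that a plain $L^1$ or $L^2$ argument would produce. This is exactly why the estimate must be carried out in the supremum norm and why the multiplicative (ratio) control $\widehat{\mathcal P}^{\,n}\mathbf 1_{I_k}=\mu(I_k)\big(1+O(\rho^{\,n-k})\big)$ is indispensable: once the deviation is dominated \emph{pointwise} by $\mu(I_k)\rho^{\,n-k}$, integrating the indicator $\mathbf 1_F$ against $d\mu$ contributes precisely $\mu(F)$. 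Establishing the Lasota--Yorke inequality for $\widehat{\mathcal P}$ on $C^1$ (or on functions of bounded variation), together with the uniform distortion estimates for the smoothed cylinder density, is therefore the technical heart of the argument. The stretched exponent $\sqrt{n-k}$ itself is harmless and would in fact arise directly if one instead followed Kuzmin's elementary iteration of the oscillation bounds for the densities $\widehat{\mathcal P}^{\,m}g$ using blocks of length $\asymp\sqrt m$.
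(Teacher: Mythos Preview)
The paper does not prove this proposition at all: it is stated with a citation to Philipp (1967) and used as a black box. So there is no ``paper's own proof'' to compare against beyond the reference itself.

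Your transfer-operator argument is sound and in fact yields the stronger exponential rate $\rho^{\,n-k}$ rather than the stretched-exponential $\gamma^{\sqrt{n-k}}$. The key steps --- smoothing $\mathbf 1_{I_k}$ by applying $\widehat{\mathcal P}^{\,k}$, using bounded distortion to control the $C^1$-norm of the resulting density by $\mu(I_k)$, invoking the spectral gap for $\widehat{\mathcal P}$ on $C^1$, and then integrating the pointwise bound against $\mathbf 1_F\,d\mu$ to extract the factor $\mu(F)$ --- are all correct and well-identified. Philipp's original 1967 argument is closer to the elementary Kuzmin-type iteration you allude to at the end (oscillation bounds propagated in blocks), which naturally produces the $\sqrt{n-k}$ exponent; the modern operator-theoretic route you take is cleaner and sharper, at the cost of importing the Lasota--Yorke/Ionescu-Tulcea--Marinescu machinery. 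Either way, for the purposes of this paper the result is simply quoted, so your proposal goes well beyond what the paper requires.
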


To quantify the measure of a limsup set, the following lemmas are widely applied, which will also be used to show Theorem 1.4.

\begin{lemma}[{\cite[Borel--Cantelli Lemma]{Chung52}}]
	Let $(X, \mathcal{B}, \nu)$ be a finite measure space. Let $\{E_n\}_{n=1}^{\infty }$ be a countable family of measurable sets in $X$. Then
	\begin{equation}
		\nu \Big(\limsup_{n \to \infty} E_n \Big)  =0, \quad \text{if } \sum_{n=1}^{\infty} \nu(E_n) < \infty.
	\end{equation}
\end{lemma}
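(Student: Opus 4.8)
The plan is to express the limsup set explicitly and then exploit monotonicity and countable subadditivity of $\nu$ together with the fact that the tail of a convergent series vanishes. First I would recall that, by the definition of the upper limit of a sequence of sets,
\[
\limsup_{n\to\infty} E_n = \bigcap_{N=1}^{\infty}\bigcup_{n=N}^{\infty} E_n .
\]
In particular, for every fixed $N\ge 1$ one has the inclusion $\limsup_{n\to\infty}E_n \subseteq \bigcup_{n=N}^{\infty}E_n$, and hence, by monotonicity of the measure, $\nu\big(\limsup_{n\to\infty}E_n\big)\le \nu\big(\bigcup_{n=N}^{\infty} E_n\big)$.

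Next I would apply countable subadditivity of $\nu$ to the union on the right-hand side, which gives $\nu\big(\bigcup_{n=N}^{\infty} E_n\big)\le \sum_{n=N}^{\infty}\nu(E_n)$. Combining the two estimates yields $\nu\big(\limsup_{n\to\infty}E_n\big)\le \sum_{n=N}^{\infty}\nu(E_n)$ for \emph{every} $N\ge 1$. Since the full series $\sum_{n=1}^{\infty}\nu(E_n)$ is assumed to converge, its tail $\sum_{n=N}^{\infty}\nu(E_n)$ tends to $0$ as $N\to\infty$. Letting $N\to\infty$ in the last inequality then forces $\nu\big(\limsup_{n\to\infty}E_n\big)=0$, which is exactly the asserted conclusion.

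There is essentially no serious obstacle here, so the only thing to be careful about is invoking the correct standard properties: the definition of $\limsup E_n$, monotonicity, and countable subadditivity of a measure, together with the elementary fact that the tail of a convergent nonnegative series vanishes. The hypothesis that $(X,\mathcal B,\nu)$ is a \emph{finite} measure space plays no essential role in this (convergence) direction and is needed only to guarantee that $\nu$ is a genuine measure to which the above properties apply; the argument would remain valid for any measure.
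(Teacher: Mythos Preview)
Your proof is correct and is the standard argument for the convergence half of the Borel--Cantelli lemma. The paper does not actually supply a proof of this statement; it is quoted as a known result with a reference, so there is nothing to compare against beyond noting that your argument is the classical one.
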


A result by Petrov \cite{Petrov02} provides a method for establishing lower bounds on the measure of certain $\limsup$ sets.
\begin{lemma}[{\cite[Theorem 2.1]{Petrov02}}]\label{l:quasiind}
	Let $(X, \mathcal{B}, \nu)$ be a probability space, and let $\{E_n\}_{n\ge 1}$ be a sequence of measurable subsets of $X$ such that
	\[\sum_{n=1}^{\infty}\mu(E_n)=\infty\]
	and
	\[\mu(E_i\cap E_j) \le c  \mu(E_i)\mu(E_j)\]
	for all $i, j>L$ with $i\ne j$, where $c\ge 1$ and $L$ are constants.
	Then,
	\[\mu\Big(\limsup_{n\to\infty}E_n\Big)\ge 1/c.\]
\end{lemma}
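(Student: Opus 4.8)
The plan is to use the classical second-moment (Cauchy--Schwarz) method, restricting attention to the tail indices $n > L$ where the quasi-independence hypothesis is available. Fix $M > L$ and for $N > M$ consider the counting function $S_N = \sum_{n=M}^{N} \mathbf{1}_{E_n}$, whose expectation with respect to $\mu$ is $\mathbb{E}[S_N] = \sum_{n=M}^{N} \mu(E_n)$ and whose second moment is $\mathbb{E}[S_N^2] = \sum_{M \le i,j \le N} \mu(E_i \cap E_j)$. Since $S_N$ is supported on the event $\bigcup_{n=M}^N E_n$, the Cauchy--Schwarz inequality gives $(\mathbb{E}[S_N])^2 = (\mathbb{E}[S_N \mathbf{1}_{\{S_N > 0\}}])^2 \le \mathbb{E}[S_N^2]\, \mu\big(\bigcup_{n=M}^N E_n\big)$, and hence
\[
\mu\Big(\bigcup_{n=M}^{N} E_n\Big) \ge \frac{\big(\sum_{n=M}^{N} \mu(E_n)\big)^2}{\sum_{M \le i,j \le N} \mu(E_i \cap E_j)}.
\]

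Next I would estimate the denominator by separating the diagonal from the off-diagonal terms. The diagonal contributes $\sum_{n=M}^{N} \mu(E_n)$, while the off-diagonal terms are controlled by the quasi-independence hypothesis: since all indices exceed $L$, we have $\sum_{i \ne j} \mu(E_i \cap E_j) \le c \sum_{i \ne j} \mu(E_i)\mu(E_j) \le c \big(\sum_{n=M}^{N} \mu(E_n)\big)^2$. Writing $T_N := \sum_{n=M}^{N} \mu(E_n)$, this yields
\[
\mu\Big(\bigcup_{n=M}^{N} E_n\Big) \ge \frac{T_N^2}{T_N + c\,T_N^2} = \frac{1}{c + T_N^{-1}}.
\]

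Because $\sum_{n=1}^{\infty} \mu(E_n) = \infty$, deleting the finitely many terms with index $\le M$ leaves a divergent series, so $T_N \to \infty$ as $N \to \infty$; letting $N \to \infty$ and using continuity of measure from below then gives $\mu\big(\bigcup_{n=M}^{\infty} E_n\big) \ge 1/c$. This bound holds for \emph{every} $M > L$. Finally, I would pass to the limsup via the identity $\limsup_{n\to\infty} E_n = \bigcap_{M} \bigcup_{n \ge M} E_n$: the sets $\bigcup_{n \ge M} E_n$ decrease in $M$, and since $\mu$ is a finite (probability) measure, continuity from above yields $\mu(\limsup_n E_n) = \lim_{M \to \infty} \mu\big(\bigcup_{n \ge M} E_n\big) \ge 1/c$, as required. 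The argument is largely routine; the only points requiring care are the Cauchy--Schwarz step, where one must index $S_N$ over the tail so that quasi-independence applies to every pair, and the observation that divergence of the series persists for every tail, which is exactly what lets the lower bound $1/c$ survive the intersection over $M$.
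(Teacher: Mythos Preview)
Your proof is correct and follows the standard Chung--Erd\H{o}s second-moment argument. The paper does not supply its own proof of this lemma but simply cites Petrov~\cite{Petrov02}, whose proof is essentially the same Cauchy--Schwarz computation you give.
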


In light of Lemma \ref{l:quasiind}, the desired statement follows by showing that the sets $\Lambda_n (r, \psi(n))$, which are defined by
\[
\Lambda_n \big(r, \psi(n)\big)=\big\{ x \in [0,1): L_n(x, \psi(n)) \geq r \big\},
\]
are quasi-independent and that the sum of their measures diverges. It turns out that it is difficult to obtain a satisfactory estimate for the quasi-independence of the sets $\Lambda_n (r, \psi(n))$.
To address this, and motivated by \cite{Dolgopyat22}, we introduce the following notion for later use.
Let $A_{n}^{k}= \{x \in [0,1): a_k(x) \geq \psi(n)\}$.
Then
\[
\Lambda_n\big(r, \psi(n)\big) = \bigcup_{1\leq k_1 <k_2 <\cdots < k_r \leq n} \bigcap_{j=1}^{r} A_{n}^{k_j}.
\]
For an arbitrary $1=k_0 \leq k_1 <k_2<\cdots< k_r \leq n$, we consider the separation indices
\[
S_n(k_1, \ldots, k_r)= \#  \big\{0\le j\le r-1: k_{j+1} -k_j-1 \geq (\log n)^2\big\}.
\]
By the exponential mixing property of the Gauss measure (see Proposition \ref{MP}), we have the following results:

\begin{proposition}\label{Separated}
	(1) If $S_n(k_1, \ldots, k_r)=r$, then
	\begin{equation}\label{ieq:lu}
	\mu(A_{n}^{1}) ^r (1- C \gamma^{ \log n})^{r-1} \leq \mu\bigg(\bigcap_{j=1}^{r} A_{n}^{k_j} \bigg)\leq \mu(A_{n}^{1}) ^r (1+ C \gamma^{\log n})^{r-1}.
\end{equation}
where $C$ is as in Theorem \ref{MP}.

	(2)  If $S_n(k_1, \ldots, k_r) < r$, then there is a constant $K_r$ such that
	\[
\mu \bigg(\bigcap_{j=1}^{r} A_{n}^{k_j}\bigg) \leq K_r \mu(A_{n}^{1}) ^r .
	\]

	(3) If $1 \leq k_1 <k_2 < \cdots < k_r <l_1 < \cdots < l_r $ such that $2 ^{i-1} < k_\alpha \leq 2^i$
	and $2^{j-1} <l_\beta \leq 2^j$ for $1 \leq \alpha, \beta \leq r$, and
	\[
	S_{2^i}(k_1, \ldots, k_r) = S_{2^j}(l_1, \ldots, l_r)=r, \quad  l_1 -k_r \geq i^2,
	\]
	then there is a constant $c_3 >0$ such that
	\[
	\mu \bigg( \bigg[\bigcap_{\alpha =1} ^{r} A_{2^i}^{k_\alpha} \bigg] \cap \bigg[\bigcap_{\beta=1}^{r} A_{2^j}^{l_\beta}\bigg] \bigg)
	\leq \mu(A_{2^i}^{1} )^r  \mu(A_{2^j}^{1} )^r (1+c_3 \gamma^i).
	\]
\end{proposition}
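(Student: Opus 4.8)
The plan is to derive all three parts from three ingredients: the exponential mixing bound of Proposition~\ref{MP}, the cylinder estimates of Proposition~\ref{cylinder}, and the $T$-invariance of the Gauss measure $\mu$; in particular $A_n^{k}=T^{-(k-1)}A_n^{1}$, so $\mu(A_n^{k})=\mu(A_n^{1})\asymp 1/\psi(n)$ for every $k$. I would prove part~(1) by induction on $r$, part~(2) by a direct summation over cylinders which in fact works for every choice of indices, and part~(3) by splicing part~(1) for each of the two dyadic blocks together with one application of the mixing bound across the gap $l_1-k_r\ge i^2$. Throughout, the logarithm is taken to base $2$, and the statements are read for $n$ (respectively $i$) large enough that $(\log n)^2\ge 1$; the finitely many smaller cases are vacuous.

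For part~(1), I would establish by induction on $r$ the more flexible claim that
\[
\mu(A_n^{1})^{r}(1-C\gamma^{\log n})^{r-1}\le \mu\Big(\bigcap_{j=1}^{r}A_n^{k_j}\Big)\le \mu(A_n^{1})^{r}(1+C\gamma^{\log n})^{r-1}
\]
holds as soon as $k_{t+1}-k_t-1\ge(\log n)^2$ for every $1\le t\le r-1$; part~(1) then follows because $S_n(k_1,\dots,k_r)=r$ forces precisely these gaps to be large. The case $r=1$ is just $\mu(A_n^{k_1})=\mu(A_n^{1})$. For the inductive step, use $a_{k_j}(x)=a_{k_j-k_2+1}(T^{k_2-1}x)$ for $j\ge 2$ to write $\bigcap_{j=1}^{r}A_n^{k_j}=A_n^{k_1}\cap T^{-(k_2-1)}H$ with $H:=\bigcap_{j=2}^{r}A_n^{k_j-k_2+1}$. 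Since $A_n^{k_1}$ is a disjoint union of order-$k_1$ cylinders $I_{k_1}(\mathbf a)$ (over $\mathbf a\in\mathbb N^{k_1}$ with $a_{k_1}\ge\psi(n)$) and the shift $k_2-1$ exceeds the order $k_1$ by $k_2-k_1-1\ge(\log n)^2$, so $\sqrt{(k_2-1)-k_1}\ge\log n$, Proposition~\ref{MP} applied to each $I_{k_1}(\mathbf a)$ and summed yields $\bigl|\mu(\bigcap_{j=1}^{r}A_n^{k_j})-\mu(A_n^{1})\mu(H)\bigr|\le C\gamma^{\log n}\,\mu(A_n^{1})\mu(H)$. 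As $H$ is an intersection of $r-1$ sets whose $r-2$ consecutive gaps $k_{t+2}-k_{t+1}-1$ are again $\ge(\log n)^2$, the induction hypothesis controls $\mu(H)$ between $\mu(A_n^{1})^{r-1}(1\pm C\gamma^{\log n})^{r-2}$, and multiplying gives the claim.

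For part~(2), I would discard the separation hypothesis and prove instead that for every $1\le k_1<\cdots<k_r$,
\[
\mu\Big(\bigcap_{j=1}^{r}A_n^{k_j}\Big)=\sum_{\mathbf b\in\mathbb N^{k_r}:\ b_{k_j}\ge\psi(n)\ \forall j}\ \mu\bigl(I_{k_r}(\mathbf b)\bigr)\le K_r\,\mu(A_n^{1})^{r}.
\]
Here $\mu(I_{k_r}(\mathbf b))\asymp q_{k_r}(\mathbf b)^{-2}$ by Proposition~\ref{cylinder}(1); deleting the $r$ partial quotients $b_{k_1},\dots,b_{k_r}$ one at a time and invoking Proposition~\ref{cylinder}(3) at each step gives $q_{k_r}(\mathbf b)\ge 2^{-r}\bigl(\prod_{j}(b_{k_j}+1)\bigr)q_{k_r-r}(\mathbf b')$, where $\mathbf b'$ is $\mathbf b$ with the coordinates at positions $k_1,\dots,k_r$ removed; hence $\mu(I_{k_r}(\mathbf b))\lesssim 4^{r}\bigl(\prod_j(b_{k_j}+1)^{-2}\bigr)\mu(I_{k_r-r}(\mathbf b'))$. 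The outer sum factors over the constrained coordinates and over $\mathbf b'$; using $\sum_{\mathbf b'\in\mathbb N^{k_r-r}}\mu(I_{k_r-r}(\mathbf b'))=1$ and $\sum_{b\ge\psi(n)}(b+1)^{-2}\asymp 1/\psi(n)\asymp\mu(A_n^{1})$ gives the bound with $K_r\asymp 4^{r}$, which in particular settles part~(2).

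For part~(3), set $E:=\bigcap_{\alpha=1}^{r}A_{2^i}^{k_\alpha}$, a disjoint union of order-$k_r$ cylinders, and write $G:=\bigcap_{\beta=1}^{r}A_{2^j}^{l_\beta}=T^{-(l_1-1)}\widetilde G$ with $\widetilde G:=\bigcap_{\beta=1}^{r}A_{2^j}^{l_\beta-l_1+1}$, so that $\mu(\widetilde G)=\mu(G)$. The shift $l_1-1$ exceeds the order $k_r$ by $l_1-k_r-1\ge i^2-1$, whence $\sqrt{(l_1-1)-k_r}\ge i-1$; applying Proposition~\ref{MP} to each cylinder of $E$ and summing gives $\mu(E\cap G)\le\mu(E)\mu(G)(1+C\gamma^{i-1})$. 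Because $S_{2^i}(k_1,\dots,k_r)=S_{2^j}(l_1,\dots,l_r)=r$, part~(1) bounds $\mu(E)\le\mu(A_{2^i}^{1})^{r}(1+C\gamma^{i})^{r-1}$ and $\mu(G)\le\mu(A_{2^j}^{1})^{r}(1+C\gamma^{j})^{r-1}$, and since $j\ge i$ (as $2^j\ge l_1>k_r>2^{i-1}$) the product of the three correction factors is at most $1+c_3\gamma^{i}$ for a suitable $c_3$. The one delicate point in the whole argument is the bookkeeping behind the mixing step in parts~(1) and~(3): one must peel the intersection using the shift $k_2-1$ (respectively $l_1-1$), not $k_1$ or $k_1+1$, so that the error in Proposition~\ref{MP} decays like $\gamma^{\log n}$ (respectively $\gamma^{i}$) rather than a non-decaying $\gamma^{O(1)}$, and the separation hypotheses are exactly what provide the required gap between shift and cylinder order. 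The rest is routine.
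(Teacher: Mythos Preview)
Your proposal is correct and follows essentially the same approach as the paper: induction via the mixing bound for part~(1), a direct cylinder summation using Proposition~\ref{cylinder} for part~(2), and one mixing application across the gap $l_1-k_r$ combined with part~(1) on each block for part~(3). The only cosmetic differences are that in part~(1) you peel off the leftmost set $A_n^{k_1}$ while the paper peels off the rightmost one, and in part~(2) you carry out the general~$r$ case explicitly via Proposition~\ref{cylinder}(3) whereas the paper writes out only $r=2$ using Proposition~\ref{cylinder}(2) and declares the general case similar.
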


\begin{proof}
	By Proposition \ref{MP}, for any $k\ge 1$ and any measurable set $E \subset [0,1]$, we have
	\begin{equation}\label{ieq:cylinder}
		\mu(I_k (\mathbf{a}) \cap T^{-n} E)  \leq  \mu(I_k (\mathbf{a})) \mu(E)  (1+C\gamma^{\sqrt{n-k}})
	\end{equation}
	and
	\begin{equation}
		\mu(I_k (\mathbf{a}) \cap T^{-n}E)  \geq  \mu(I_k (\mathbf{a})) \mu(E)  (1-C\gamma^{\sqrt{n-k}}),
	\end{equation}
	for all $\mathbf{a}=(a_1,a_2,\ldots, a_k) \in \mathbb{N}^{k}$ and $n \geq k+1$.

	(1) We show the inequality \eqref{ieq:lu} by induction on $r$.
	The inequality \eqref{ieq:lu} is obviously true for $r=1$,
	since $A_{n}^{k}= T^{-k+1} A_{n}^{1}$ and $\mu$ is invariant.

	Assume \eqref{ieq:lu} holds for $r=\ell$.
	Notice that for $r=\ell+1$,
	\begin{align*}
		\bigcap_{j=1}^{\ell+1} A_{n}^{k_j} &= \bigcap_{j=1}^{\ell} A_{n}^{k_j} \cap A_{n}^{k_{\ell+1}}\\
		& =\bigcup_{\substack{a_{k_i} \geq \psi(n)\\ 1\le i\le \ell}} I_{k_\ell}(a_1, \ldots, a_{k_\ell}) \cap T^{-k_{\ell+1}+1} A_{n}^{1}.
	\end{align*}
	Since the union is disjoint, by the countable additivity of the measure, one has
	\begin{align*}
		\mu \bigg(\bigcap_{j=1}^{\ell+1} A_{n}^{k_j} \bigg)
		&=\sum_{\substack{a_{k_i} \geq \psi(n)\\ 1\le i\le \ell}} \mu \bigg( I_{k_{\ell}}(a_1,  \ldots, a_{k_{\ell}}) \cap  T^{-(k_{\ell+1}+1)}A_{n}^{1} \bigg)\\
		&\leq \sum_{\substack{a_{k_i} \geq \psi(n)\\ 1\le i\le \ell} } \mu (I_{k_{\ell}}(a_1, \ldots, a_{k_{\ell}})) \mu (A_{n}^{1} ) (1+C \gamma^{\log n})\\
		&= \mu\bigg(\bigcap_{j=1}^{\ell} A_{n}^{k_j}\bigg) \mu ( A_{n}^{1}) (1+C\gamma^{\log n})\\
		&\leq  \mu(A_{n}^{1}) ^{\ell} (1+ C \gamma^{ \log n})^{\ell-1} \mu (A_{n}^{1}) (1+C\gamma^{\log n})\\
		&= \mu(A_{n}^{1}) ^{\ell+1} (1+ C\gamma^{ \log n})^{\ell},
	\end{align*}
	where the second inequality follows from \eqref{ieq:cylinder}, and the fourth inequality is obtained by the induction hypothesis.

	The opposite inequality follows from similar calculations, which completes the proof of item (1).

	(2)
	We present a detailed proof for the case $r=2$ only, since the general case  $r \geq 3$ follows a similar approach.

	By the invariance of $\mu$, it is obvious to see that
	\begin{align*}
		\mu\big( A_{n}^{k_1} \cap A_{n}^{k_2}\big) &=\mu\Big(T^{-k_1+1} \big(A_{n}^{1} \cap T^{-(k_2-k_1)} A_{n}^{1} \big)\Big)\\
		&=\mu\big(A_{n}^{1} \cap T^{-(k_2-k_1)}A_{n}^{1}\big) \\
		&=\mu\big(A_{n}^{1} \cap A_{n}^{k_2-k_1+1}\big).
	\end{align*}
	The intersection in the last equation can be written as a disjoint union of cylinder sets:
	\begin{align*}
		A_{n}^{1} \cap A_{n}^{k_2-k_1+1} &= \bigcup_{a \geq \psi(n)} \bigcup_{\bm u \in \mathbb{N}^{k_2-k_1-1}} \bigcup_{b \geq \psi(n)} I_{k_2-k_1+1}(a, \bm u,  b).
	\end{align*}
	Since $\mu$ is strongly equivalent to the Lebesgue measure with a density function $h(x)=\frac{1}{\log2} \cdot\frac{1}{1+x}$,
	\begin{align*}
		\mu( A_{n}^{k_1} \cap A_{n}^{k_2} )
		&=\sum_{a \geq \psi(n)} \sum_{\bm u \in \mathbb{N}^{k_2-k_1-1}} \sum_{b \geq \psi(n)}\mu( I_{k_2-k_1+1}(a,\bm u, b))  \\
		&\asymp \sum_{a \geq \psi(n)} \sum_{\bm u \in \mathbb{N}^{k_2-k_1-1}} \sum_{b \geq \psi(n)} \mathcal{L}( I_{k_2-k_1+1}(a,\bm u ,b)).
	\end{align*}
	By Proposition \ref{cylinder} (1), each $\mathcal{L}( I_{k_2-k_1+1}(a,\bm u, b)) $ in the summation can be estimated as
	\begin{align*}
		\mathcal{L}( I_{k_2-k_1+1}(a,\bm u ,b))
		&\asymp \frac{1}{q_{k_2-k_1+1}^{2}(a, \bm u, b)}\\
		&\asymp \frac{1}{q_1^{2}(a)q_{k_2-k_1-1}^{2}(\bm u) q_1^2(b)} \\
		&\asymp \mathcal{L}(I_1(a)) \mathcal{L}(I_{k_2-k_1-1}(\bm u)) \mathcal{L}( I_1(b)) \\
		& \asymp \mu(I_1(a)) \mu(I_{k_2-k_1-1}(\bm u)) \mu( I_1(b)).
	\end{align*}
	Therefore, we have
	\begin{align*}
		\mu( A_{n}^{k_1} \cap A_{n}^{k_2})
		&\asymp \sum_{a \geq \psi(n)} \mu(I_1(a)) \sum_{\bm u \in \mathbb{N}^{k_2-k_1-1}} \mu(I_{k_2-k_1-1}(\bm u)) \sum_{b \geq \psi(n)} \mu( I_1(b))\\
		&= \mu\bigg(\bigcup_{a \geq \psi(n)} I_1(a)\bigg)   \mu\bigg(\bigcup_{\bm u \in \mathbb{N}^{k_2-k_1-1} } I_{k_2-k_1-1}(\bm u )\bigg) \mu\bigg(\bigcup_{b \geq \psi(n)} I_1(b)\bigg) \\
		&=  \mu(A_n^1)    \mu(A_n^1),
	\end{align*}
which implies there is a constant $K_2$ such that
\[
\mu( A_{n}^{k_1} \cap A_{n}^{k_2}) \leq K_2 \mu(A_n^1)^2.
\]

	(3) In order to estimate the measure of $\big[\bigcap_{\alpha=1}^{r} A_{2^i}^{k_\alpha} \big] \cap \big[ \bigcap_{\beta=1}^{r} A_{2^j}^{l_\beta}\big] $, we represent the former set as a cylinder set, that is,
	\[
	\bigg[\bigcap_{\alpha =1} ^{r} A_{2^i}^{k_\alpha} \bigg] \cap \bigg[\bigcap_{\beta=1}^{r} A_{2^j}^{l_\beta}\bigg]=\bigcup_{\substack{a_{k_i} \geq \psi(n)\\ 1\le i\le r} } I_{k_r}(a_1, \ldots, a_{k_r} )  \cap T^{-l_1}
	\bigg(\bigcap_{\beta=1}^{r} A_{2^j}^{l_\beta-l_1} \bigg)
	\]
	By \eqref{ieq:cylinder} and the condition $l_1-k_r\ge i^2$, the measure of each set on the right-hand side is majorized by
	\begin{align*}
		\leq \mu( I_{k_r}(a_1, \ldots, a_{k_r} )) \mu \bigg( \bigcap_{\beta=1}^{r} A_{2^j}^{l_\beta}\bigg) (1+C \gamma^i),
	\end{align*}
	where we have used $\mu \big( \big[\bigcap_{\beta=1}^{r} A_{2^j}^{l_\beta-l_1} \big] \big)=\mu \big( \big[\bigcap_{\beta=1}^{r} A_{2^j}^{l_\beta} \big] \big)$.
	Summing over $a_{k_1}, \ldots, a_{k_r}$ and noting that it is disjoint union,
	it yields
	\begin{align*}
		&\mu \bigg( \bigg[\bigcap_{\alpha =1} ^{r} A_{2^i}^{k_\alpha} \bigg] \cap \bigg[\bigcap_{\beta=1}^{r} A_{2^j}^{l_\beta}\bigg] \bigg)
		\leq \mu \bigg(\bigcap_{\alpha=1}^{r} A_{2^i}^{k_\alpha} \bigg) \mu\bigg(\bigcap_{\beta=1}^{r} A_{2^j}^{l_\beta}\bigg)  \big(1+C \gamma^i \big).
	\end{align*}
	Combining the estimates for the measures of the sets $\bigcap_{\alpha=1}^{r} A_{2^i}^{k_\alpha}$ and $\bigcap_{\beta=1}^{r} A_{2^j}^{l_\beta}$ presented in item (1),
	one has
	\begin{align*}
	\mu\bigg( \bigg[\bigcap_{\alpha =1} ^{r} A_{2^i}^{k_\alpha} \bigg] \cap \bigg[\bigcap_{\beta=1}^{r} A_{2^j}^{l_\beta}\bigg] \bigg)
	\leq  &\big(\mu(A_{2^i}^{1})\big)^r \big(\mu(A_{2^j}^{1})\big )^r  (1+ C\cdot \gamma^{i \log 2})^{r-1} \\
	 &\cdot (1+ C\cdot \gamma^{j \log 2})^{r-1} (1+C\gamma^i)\\
	 \lesssim & ~ \mu(A_{2^i}^{1} )^r  \mu(A_{2^j}^{1} )^r (1+c_3 \gamma^i).
	  \qedhere
	\end{align*}
\end{proof}

\subsection{Hausdorff dimension}\

For completeness we give a brief introduction to Hausdorff measure and dimension. For further details we refer to the beautiful text \cite{Falconer2013}.

Let $ (X,d) $ be a metric space and $\mathcal{D}(Z,r)$ denote the collection of countable open covers $\{U_{i}\}_{i=1}^{\infty}$ of $Z$ for which ${\rm diam}\, U_{i} < r$ for all $i$. Given a subset $ Z \subset X, t\geq 0$, define
\[
\mathcal{H}^t(Z, r)=\inf_{\mathcal{D}(Z,r)}\left\{\sum\limits_{U_{i} \in \mathcal{D}(Z,r)} ({\rm diam}\,U_{i})^t \right\}.
\]
It is easy to see that $\mathcal{H}^t(Z, r)$ increases as $r$ decreases. Therefore there exists the limit
\[
\mathcal{H}^t(Z)=\lim\limits_{r\rightarrow 0} \mathcal{H}^t(Z, r)
\]
which is called the \emph {t-dimensional Hausdorff measure} of $Z$.
The number
\[
\dim_{\mathrm{H}}Z=\inf\{t >0: \mathcal{H}^t(Z)=0\}=\sup\{t>0 : \mathcal{H}^t(Z)=\infty\}
\]
is called the \emph {Hausdorff dimension} of $Z$.

\subsection{Topological pressure}\

When dealing with problems related to dimension theory, concepts such as the pressure function from thermodynamics prove to be powerful tools. The notion of a general pressure function was introduced by Ruelle \cite{Ruelle1973} and Walters \cite{Walters1975, Walters1982}, describing  the exponential growth rate of ergodic sums.
Our focus lies in methods for computing Hausdorff dimensions via pressure functions.
%In the linear setting, the approach in [ ] can be employed to determine the Hausdorff dimension of self-similar sets.
%In the nonlinear case, the connection between Hausdorff dimension and pressure functions is established in [ ], which generalizes the classical result of Moran \cite{Moran1946}.
For more details and context on pressure functions in infinite conformal iterated function systems can be found in \cite{MU1996, MU1999, MU2003}.

Given any real-valued function $\varphi : [0,1) \to \mathbb{R}$, the pressure function is defined as
\begin{equation}\label{pressure}
	P(T, \varphi):= \lim_{n \to \infty} \frac{1}{n} \log \sum_{a_1,\ldots, a_n \in \mathbb{N}} \sup_{x \in [0,1)} e^{S_n \varphi ([a_1, \ldots, a_n+x])},
\end{equation}
where $S_n \varphi (x)$ denotes the $n$th ergodic sum $\varphi (x)+ \cdots+ \varphi (T^{n-1} x)$ at $x$.

For any $n \in \mathbb{N}$, the notation
\[
Var_n(\varphi): =\sup \{| \varphi(x)- \varphi(y) | : I_n(x) = I_n(y) \}
\]
is called the $n$th variation of $\varphi$, where $I_n(x)$ represents the $n$th order cylinder containing $x \in [0, 1)$.

The existence of the limit in equation (\ref{pressure}) is guaranteed by the following proposition given by Li et al \cite{Li2014}.

\begin{proposition}[\cite{Li2014}]\label{p:pressure}
	Let $\varphi : [0, 1) \to \mathbb{R}$ be a real function with $Var_1 (\varphi) < \infty$ and $Var_n(\varphi) \to 0$ as $n \to \infty$.
	Then the limit defining $P(T, \varphi) $ exists and the value of $P(T, \varphi)$ remains the same even without taking supremum over $x \in [0,1)$ in (\ref{pressure}).
\end{proposition}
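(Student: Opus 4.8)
The plan is to treat the two assertions of the proposition separately: existence of the limit will follow from submultiplicativity of the associated partition function together with Fekete's subadditive lemma, while the claim that the supremum over $x$ may be dropped will follow from a uniform bound on the oscillation of the ergodic sums $S_n\varphi$ across cylinders of order $n$.

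First I would rewrite the quantity under the limit. For $\mathbf a=(a_1,\ldots,a_n)\in\mathbb N^n$, as $x$ ranges over $[0,1)$ the point $[a_1,\ldots,a_n+x]$ sweeps out the interior of the cylinder $I_n(\mathbf a)$, so setting
\[
Z_n(\varphi):=\sum_{\mathbf a\in\mathbb N^n}\ \sup_{x\in[0,1)}e^{S_n\varphi([a_1,\ldots,a_n+x])}=\sum_{\mathbf a\in\mathbb N^n}\ \sup_{y\in I_n(\mathbf a)}e^{S_n\varphi(y)},
\]
the claim is that $\tfrac1n\log Z_n(\varphi)$ converges. Writing $\mathbf a\in\mathbb N^{m+n}$ as a concatenation $(\mathbf b,\mathbf c)$ with $\mathbf b\in\mathbb N^m$, $\mathbf c\in\mathbb N^n$, the cocycle identity $S_{m+n}\varphi(y)=S_m\varphi(y)+S_n\varphi(T^m y)$ together with $y\in I_m(\mathbf b)$ and $T^m y\in I_n(\mathbf c)$ gives
\[
\sup_{y\in I_{m+n}(\mathbf b,\mathbf c)}e^{S_{m+n}\varphi(y)}\ \le\ \Bigl(\sup_{z\in I_m(\mathbf b)}e^{S_m\varphi(z)}\Bigr)\Bigl(\sup_{w\in I_n(\mathbf c)}e^{S_n\varphi(w)}\Bigr);
\]
summing over all $\mathbf b$ and $\mathbf c$ yields the submultiplicative inequality $Z_{m+n}(\varphi)\le Z_m(\varphi)Z_n(\varphi)$. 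Each $Z_n(\varphi)$ is a sum of strictly positive terms, hence positive, and either it is finite for all $n$ — in which case $\log Z_n(\varphi)$ is a genuine subadditive sequence and Fekete's lemma gives $\tfrac1n\log Z_n(\varphi)\to\inf_{n\ge1}\tfrac1n\log Z_n(\varphi)=:P(T,\varphi)$ — or, by chaining the same inequality down to $Z_1(\varphi)$, it is identically $+\infty$ and the limit is trivially $+\infty$.

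For the second assertion I would bound the oscillation of $S_n\varphi$ on an order-$n$ cylinder. If $y,z\in I_n(\mathbf a)$ and $0\le k\le n-1$, then $T^k y$ and $T^k z$ lie in the common cylinder $I_{n-k}(a_{k+1},\ldots,a_n)$, so $|\varphi(T^k y)-\varphi(T^k z)|\le Var_{n-k}(\varphi)$; summing over $k$ gives
\[
\bigl|S_n\varphi(y)-S_n\varphi(z)\bigr|\ \le\ \sum_{k=0}^{n-1}Var_{n-k}(\varphi)=\sum_{j=1}^{n}Var_j(\varphi).
\]
Hence, if $\widehat Z_n(\varphi)$ denotes the analogous partition function defined with a fixed choice of point in each cylinder (say $x=0$) in place of the supremum, then $\widehat Z_n(\varphi)\le Z_n(\varphi)\le\exp\bigl(\sum_{j=1}^{n}Var_j(\varphi)\bigr)\widehat Z_n(\varphi)$. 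Since $Var_j(\varphi)\to0$, Cesàro averaging forces $\tfrac1n\sum_{j=1}^{n}Var_j(\varphi)\to0$, so $\tfrac1n\log\widehat Z_n(\varphi)$ has the same limit $P(T,\varphi)$, which is exactly the assertion.

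The whole argument is essentially bookkeeping, so I do not expect a genuine obstacle; the only step needing a little care is the parametrization one — checking that letting $x$ run over $[0,1)$ really recovers $\sup_{y\in I_n(\mathbf a)}e^{S_n\varphi(y)}$ up to a harmless factor, which holds because the missed endpoint changes $S_n\varphi$ by at most $Var_n(\varphi)\to0$ and hence does not affect exponential growth rates — together with dispatching the degenerate case in which every $Z_n(\varphi)$ is infinite, as noted above.
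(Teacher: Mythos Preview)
The paper does not give its own proof of this proposition: it is simply quoted from Li, Wang, Wu and Xu \cite{Li2014} (``The existence of the limit in equation (\ref{pressure}) is guaranteed by the following proposition given by Li et al \cite{Li2014}''), so there is nothing in the paper to compare your argument against. Your approach---submultiplicativity plus Fekete for the existence, and the oscillation bound $\sum_{j=1}^n Var_j(\varphi)$ together with Ces\`aro averaging for the removal of the supremum---is exactly the standard route and is essentially how the result is proved in the cited source.

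There is, however, one logical slip in your treatment of the degenerate case. You write that if $Z_1(\varphi)=+\infty$ then ``by chaining the same inequality down to $Z_1(\varphi)$'' every $Z_n(\varphi)$ is $+\infty$. But submultiplicativity $Z_{m+n}\le Z_mZ_n$ only yields $Z_n\le Z_1^n$, which is the wrong direction: it shows that $Z_1<\infty$ forces all $Z_n<\infty$, not that $Z_1=\infty$ forces all $Z_n=\infty$. To get the latter you need the reverse inequality, and you already have the ingredients: your oscillation estimate gives, for $y\in I_{m+n}(\mathbf b,\mathbf c)$ with $T^m y=w\in I_n(\mathbf c)$,
\[
S_{m+n}\varphi(y)=S_m\varphi(y)+S_n\varphi(w)\ge \sup_{I_m(\mathbf b)}S_m\varphi-\sum_{j=1}^m Var_j(\varphi)+S_n\varphi(w),
\]
and taking the supremum over $w$ and summing yields $Z_{m+n}\ge e^{-\sum_{j\le m}Var_j(\varphi)}Z_mZ_n$. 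With $m=1$ this gives $Z_n\ge e^{-Var_1(\varphi)}Z_1Z_{n-1}$, so $Z_1=\infty$ indeed propagates. Apart from this easily repaired step, your argument is complete.
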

Note that if $\varphi$ is continuous, then $\varphi$ satisfies the variation condition and the supremum in equation \eqref{pressure} can be removed.

For any $1 < B < \infty$ and $s \geq 0$, it is clear that the $n$th variation of the potential
\[
- s \log |T'|-\big(s+(2s-1)(r-1)\big) \log B
\]
goes to zero as $n\to\infty$. This, together with Proposition \ref{p:pressure}, allows us define the dimensional number
\[
s(r, B):=	\inf \Big\{ s \geq 0: P\Big(T, - s \log |T'|-\big(s+(2s-1)(r-1)\big) \log B\Big) \leq 0 \Big\}.
\]
With little effort, the pressure function in the definition of $s(r,B)$ can be written more explicitly as
\[P\Big(T, - s \log |T'|-\big(s+(2s-1)(r-1)\big) \log B\Big)=  \lim_{n \to \infty} \frac{1}{n} \log \sum_{\bm a\in \mathbb{N}^n}   \frac{1}{ q_n^{2s}(\bm a)B^{n(s+(2s-1)(r-1))}}.\]
% where $1 < B < \infty$, and $s \geq 0$.
%It is clear that $\varphi_1$ is continuous, and
%\begin{align*}
%	S_n \varphi_{s}(x) &=- s\log |(T^n)'(x)|- n (s+(2s-1)(r-1)) \log B\\
%	& \asymp  - s \log q_n^2-n (s+(2s-1)(r-1)) \log B.
%\end{align*}
%Therefore, the pressure function (\ref{pressure}) with potential $\psi$ becomes
%\begin{align*}
%	P(T, \varphi_s) & =  \lim_{n \to \infty} \frac{1}{n} \log \sum_{a_1,\ldots, a_n \in \mathbb{N}}  e^{S_n \varphi_s (x) } \\
%	& =  \lim_{n \to \infty} \frac{1}{n} \log \sum_{a_1,\ldots, a_n \in \mathbb{N}}   \frac{1}{ q_n^{2s}B^{n(s+(2s-1)(r-1))}}.
%\end{align*}
%
%
%Denote
%\[
%s(r, B)=	\inf \Big\{ s \geq 0: P\Big(T, - s \log |T'|-\big(s+(2s-1)(r-1)\big) \log B\Big) \leq 0 \Big\}.
%\]
Following a similar argument as in \cite{Wang2008} with some minor modifications, we can derive the following from the right-hand side of the above equation:
\begin{proposition}[\cite{Wang2008}] \label{p:s(B)}
Let $r\ge 1$ be an integer. It holds that
\begin{itemize}
		\item[(1)] As a function of $B \in (1, \infty)$, $s(r, B)$ is strictly decreasing and continuous;
		\item[(2)]
		\[
		\lim_{B \to 1} s(r, B) =1 \quad \text{and} \quad \lim_{B \to \infty} s(r, B)=1/2.
		\]
\end{itemize}

\end{proposition}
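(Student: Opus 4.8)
The plan is to reduce the whole statement to the behaviour of a single real function. Put $t(s):=s+(2s-1)(r-1)=(2r-1)s-(r-1)$, so that, by the explicit formula recorded just before the proposition,
\[
G_B(s):=P\Big(T,-s\log|T'|-t(s)\log B\Big)=p(s)-t(s)\log B\qquad(s>1/2),
\]
where $p(s):=P(T,-s\log|T'|)=\lim_{n\to\infty}\frac1n\log\sum_{\bm a\in\mathbb N^n}q_n^{-2s}(\bm a)$ is the classical continued-fraction pressure. From \cite{Wang2008} (see also \cite{MU1999}) I will use the standard facts that $p$ is finite, continuous and strictly decreasing on $(1/2,\infty)$, that $p(s)\to+\infty$ as $s\to1/2^+$ and $p(s)\to-\infty$ as $s\to\infty$, and that $p(1)=0$ (the last because $\sum_{\bm a\in\mathbb N^n}q_n^{-2}(\bm a)\asymp\sum_{\bm a}|I_n(\bm a)|=1$). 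The one structural difference from the case $r=1$ treated in \cite{Wang2008} is that the scalar $s$ multiplying $\log B$ is replaced by $t$; since $t$ is affine with slope $2r-1>0$ and $t(1/2)=\tfrac12$, we have $t(s)\ge\tfrac12>0$ for every $s\ge\tfrac12$, and this positivity is all the argument requires.

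First I would fix $B\in(1,\infty)$ and locate the root. Since $\log B>0$ and $t$ is increasing, $s\mapsto t(s)\log B$ is continuous and strictly increasing, so $G_B$ is continuous and strictly decreasing on $(1/2,\infty)$; moreover $G_B(s)\to+\infty$ as $s\to1/2^+$ and $G_B(1)=p(1)-t(1)\log B=-r\log B<0$. Hence $G_B$ has a unique zero $s(r,B)\in(1/2,1)$, with $G_B>0$ on $(1/2,s(r,B))$ and $G_B<0$ on $(s(r,B),\infty)$. Since in addition $P(T,-s\log|T'|-t(s)\log B)=+\infty$ for $0<s\le\tfrac12$, the set $\{s\ge0:P(T,-s\log|T'|-t(s)\log B)\le0\}$ equals $[s(r,B),\infty)$, so the infimum defining $s(r,B)$ is attained at this zero.

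For part (1), take $1<B_1<B_2$ and write $s_1:=s(r,B_1)$; then $G_{B_2}(s_1)=p(s_1)-t(s_1)\log B_2<p(s_1)-t(s_1)\log B_1=0$ because $t(s_1)>0$, and since $G_{B_2}$ is strictly decreasing with zero $s(r,B_2)$ this forces $s(r,B_2)<s_1$, proving strict monotonicity. Continuity follows from the joint continuity of $(s,B)\mapsto p(s)-t(s)\log B$: given $B$ and $\varepsilon>0$ with $[s(r,B)-\varepsilon,s(r,B)+\varepsilon]\subset(1/2,1)$, one has $G_B(s(r,B)-\varepsilon)>0>G_B(s(r,B)+\varepsilon)$, hence the same strict inequalities hold with $B$ replaced by any $B'$ close to $B$, whence $|s(r,B')-s(r,B)|<\varepsilon$. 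For part (2), monotonicity together with $s(r,B)\in(1/2,1)$ yields one-sided limits $L:=\lim_{B\to1^+}s(r,B)\le1$ and $\ell:=\lim_{B\to\infty}s(r,B)\ge\tfrac12$. If $L<1$, pick $s^\ast\in(L,1)$: as $B\to1^+$, $G_B(s^\ast)\to p(s^\ast)>p(1)=0$, so $G_B(s^\ast)>0$ and thus $s(r,B)>s^\ast>L$ for $B$ near $1$, a contradiction; letting $s^\ast\uparrow1$ gives $L=1$. If $\ell>\tfrac12$, pick $s^\ast\in(\tfrac12,\ell)$: since $p(s^\ast)$ is finite and $t(s^\ast)>0$, $G_B(s^\ast)=p(s^\ast)-t(s^\ast)\log B\to-\infty$ as $B\to\infty$, so $s(r,B)<s^\ast<\ell$ for $B$ large, again a contradiction; hence $\ell=\tfrac12$.

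The only non-routine ingredient is the package of properties of $p(s)=P(T,-s\log|T'|)$ used above — finiteness, strict monotonicity and continuity on $(1/2,\infty)$, together with $p(1/2^+)=+\infty$, $p(1)=0$ and $p(\infty)=-\infty$ — all of which are either proved in \cite{Wang2008} or follow from standard estimates (convexity of $s\mapsto\log\sum_{\bm a}q_n^{-2s}(\bm a)$ via H\"older gives the continuity, exponential growth of $\min_{\bm a\in\mathbb N^n}q_n(\bm a)$ gives the strict decrease, and the $n=1$ term $\sum_{a\ge1}a^{-2s}$ already forces the blow-up at $s=1/2$). Everything else is elementary bookkeeping around the affine function $t(s)=(2r-1)s-(r-1)$ and its positivity on $[1/2,\infty)$, which is precisely the ``minor modification'' of \cite{Wang2008} needed for general $r$.
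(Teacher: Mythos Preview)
Your proof is correct and is precisely the ``minor modification'' of \cite{Wang2008} that the paper alludes to without spelling out: the paper gives no self-contained argument for this proposition, so your write-up is in fact more detailed than what appears in the text. The only cosmetic point is that the phrase ``letting $s^\ast\uparrow1$ gives $L=1$'' is superfluous---the contradiction you derive for a single $s^\ast\in(L,1)$ already forces $L\ge1$ directly---but this does not affect the validity of the argument.
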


\section{ \emph{Proof of Theorem \ref{leb}}}
Recall that
\[
F(r, \psi): =\Big\{ x \in [0,1): L_n(x, \psi(n)) \geq r\  \text{for i.m.}~n \in \mathbb{N} \Big\}.
\]
We start with a proposition that allows us to replace $\psi$ by a non-decreasing function, and the resulting set remains unchanged.
\begin{proposition}\label{p:mon}
Let $\widetilde{\psi}(n) = \min_{m\geq n} \psi(m)$. We have $F(r, \psi)=F(r, \widetilde{\psi})$.
\end{proposition}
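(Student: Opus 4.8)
The plan is to prove the two inclusions $F(r,\psi)\subseteq F(r,\widetilde\psi)$ and $F(r,\widetilde\psi)\subseteq F(r,\psi)$ separately, exploiting the fact that $\widetilde\psi(n)=\min_{m\ge n}\psi(m)$ is non-decreasing and satisfies $\widetilde\psi(n)\le\psi(n)$ for every $n$. The second inclusion is immediate: since $\widetilde\psi\le\psi$ pointwise, the condition $a_{k_i}(x)\ge\psi(n)$ forces $a_{k_i}(x)\ge\widetilde\psi(n)$, so if $x$ has $r$ partial quotients among the first $n$ terms exceeding $\psi(n)$ for infinitely many $n$, the same $n$ witness the same property with $\widetilde\psi$ in place of $\psi$. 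Hence $F(r,\psi)\subseteq F(r,\widetilde\psi)$ trivially—wait, I should be careful about the direction; pointwise $\widetilde\psi\le\psi$ gives $F(r,\psi)\subseteq F(r,\widetilde\psi)$, so it is the \emph{reverse} inclusion $F(r,\widetilde\psi)\subseteq F(r,\psi)$ that requires an argument.

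For that reverse inclusion, fix $x\in F(r,\widetilde\psi)$. Then there are infinitely many $n$ for which one can find $1\le k_1<\cdots<k_r\le n$ with $a_{k_i}(x)\ge\widetilde\psi(n)$ for all $i$. The key observation is that $\widetilde\psi$ is non-decreasing, so for any such $n$ and any $N\ge n$ we have $\widetilde\psi(N)\ge\widetilde\psi(n)$ — that pushes the wrong way. Instead, the right move is to use the definition of $\widetilde\psi(n)$ as an infimum: $\widetilde\psi(n)=\min_{m\ge n}\psi(m)$, so there exists $m=m(n)\ge n$ with $\psi(m(n))=\widetilde\psi(n)$ (the min is attained, or at least approached; if $\psi$ is merely positive one takes $m(n)$ with $\psi(m(n))\le\widetilde\psi(n)+\varepsilon_n$, but since the range is discrete enough in the relevant applications one can usually attain it — more robustly, pick $m(n)\ge n$ with $\psi(m(n))\le\widetilde\psi(n)$, which always exists by definition of the infimum as $\le$ every $\psi(m)$, $m\ge n$... actually $\widetilde\psi(n)\le\psi(m)$ for all $m\ge n$, so attainment needs the min, not just inf). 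Assuming the minimum is attained, set $m(n)\ge n$ with $\psi(m(n))=\widetilde\psi(n)$. Then $k_1<\cdots<k_r\le n\le m(n)$, and $a_{k_i}(x)\ge\widetilde\psi(n)=\psi(m(n))$ for each $i$, so $m(n)$ witnesses the $F(r,\psi)$ condition. Finally one checks that as $n$ ranges over the infinite witnessing set, $m(n)\to\infty$ (since $m(n)\ge n$), so infinitely many distinct $m(n)$ arise, giving $x\in F(r,\psi)$.

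The one genuine subtlety — and the step I expect to need the most care — is the attainment of the minimum in $\widetilde\psi(n)=\min_{m\ge n}\psi(m)$. If $\psi$ is an arbitrary positive function, the infimum over $\{m\ge n\}$ need not be attained by any single $m$. The clean fix is to argue with a near-minimizer: for each $n$ in the witnessing set, choose $m(n)\ge n$ with $\psi(m(n))<\widetilde\psi(n)+1$ is not enough since we need $\psi(m(n))\le a_{k_i}(x)$; but we do know $a_{k_i}(x)\ge\widetilde\psi(n)$ and $a_{k_i}(x)$ is a positive \emph{integer}, so if $a_{k_i}(x)>\widetilde\psi(n)$ strictly for all $i$ (which we can often arrange by a limiting/subsequence argument, or by noting that $\widetilde\psi(n)$ can be replaced by $\widetilde\psi(n)+\delta$ for small $\delta$ without changing membership when the inequalities are strict), we may pick $m(n)\ge n$ with $\psi(m(n))\le a_{k_i}(x)$ for all $i$. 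Alternatively, and most cleanly, one observes that membership in $F(r,\cdot)$ only depends on the values of $\psi$ along the witnessing sequence, and since $\widetilde\psi(n)=\inf_{m\ge n}\psi(m)$, for each $n$ there is $m(n)\ge n$ with $\psi(m(n))$ as close to $\widetilde\psi(n)$ as desired; combined with the integrality of $a_{k_i}(x)$, this forces $a_{k_i}(x)\ge\psi(m(n))$ for a suitable choice of $m(n)$, completing the inclusion. I would write the argument using this integrality observation to avoid any appeal to attainment of the minimum.
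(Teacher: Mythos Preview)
Your core argument is correct and is exactly the paper's proof: the paper also picks, for each witnessing $n$, an integer $m_n\ge n$ with $\psi(m_n)=\widetilde\psi(n)$, notes that the same indices $k_1<\cdots<k_r\le n\le m_n$ then satisfy $a_{k_j}(x)\ge\psi(m_n)$, and concludes $x\in F(r,\psi)$ since $m_n\ge n\to\infty$.

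Your worry about attainment of the minimum is mathematically alert but unnecessary here: the statement is formulated with ``$\min$'' rather than ``$\inf$'', and in the paper's setting this is justified by the standing hypothesis $\psi(n)\to\infty$, which makes $\{m\ge n:\psi(m)\le\psi(n)\}$ finite and hence the minimum attained. More importantly, your proposed integrality workaround does \emph{not} close the gap in general. Take $\psi(m)=K+1/m$ for a fixed integer $K\ge 1$; then $\inf_{m\ge n}\psi(m)=K$ for every $n$, while $a_{k_i}(x)\ge\psi(n)$ forces $a_{k_i}(x)\ge K+1$. The quadratic irrational $x=[K,K,K,\ldots]$ lies in $F(r,\widetilde\psi)$ (with $\widetilde\psi\equiv K$) but not in $F(r,\psi)$, so the two sets genuinely differ and no argument---integrality-based or otherwise---can recover the equality when the minimum is replaced by a non-attained infimum. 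Drop that paragraph and simply invoke attainment.
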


\begin{proof}
By the definition of $\widetilde{\psi}$, we have $\widetilde{\psi}(n)  \leq \psi(n)$.
Hence, $F(r, \psi) \subset F(r, \widetilde{\psi})$.

On the other hand, for any $x \in F(r, \widetilde{\psi})$, there exist infinitely many $n$ such that
\begin{equation}\label{eq:akj>}
	a_{k_j}\ge \widetilde{\psi}(n) =\min_{m \geq n} \psi(m) \quad \text{for some $k_j\le n$ ($1\le j\le r$)}.
\end{equation}
For any such $n$, there exists an integer $m_n\ge n$ such that $\widetilde{\psi}(n) =\psi(m_n)$. In particular, it follows from \eqref{eq:akj>} that
\begin{equation}\label{eq:akj>2}
	a_{k_j}\ge \psi(m_n)=\widetilde{\psi}(n)\quad \text{for some $k_j\le n\le m_n$ ($1\le j\le r$)}.
\end{equation}
This means that there will also exist infinitely many $m_n$ such that \eqref{eq:akj>2} holds, and thus $ x \in F(r, \psi) $.
This yields $F(r, \widetilde{\psi})  \subset F(r, \psi)$, as desired.
\end{proof}

In view of Proposition \ref{p:mon}, from now on we will assume that $\psi$ is non-decreasing, i.e.
\begin{equation}\label{eq:psi}
	\min_{m\geq n}\psi(m)=\psi(n).
\end{equation}
Therefore,
\[\sum_{n=1}^{\infty} n^{r-1} \Big(\min_{m\geq n}\psi(m)\Big)^{-r}=\sum_{n=1}^{\infty} n^{r-1} \psi(n)^{-r}.\]

\subsection{Convergence part} \
In this subsection, assume that
\begin{equation}\label{eq:conver}
	\sum_{n=1}^{\infty} n^{r-1} \psi(n)^{-r} < \infty.
\end{equation}
Recall that
\[
\Lambda_n \big(r, \psi(n)\big)=\big\{ x \in [0,1): L_n(x, \psi(n)) \geq r \big\}.
\]
The limsup nature of the set $F(r, \psi) $ enables us to rewrite it as follows:
\begin{align}\label{cover}
	F(r, \psi)=&\bigcap_{N=1}^\infty\bigcup_{n=N}^\infty \Lambda_n\big(r, \psi(n)\big) \notag\\
	= &\bigcap_{N=1}^\infty\bigcup_{m=N}^\infty\bigcup_{n=2^m} ^{2^{m+1}}  \Lambda_n\big(r, \psi(n)\big)  \notag \\
	\subset & \bigcap_{N=1}^\infty\bigcup_{m=N}^\infty \Lambda_{2^{m+1}} \big(r, \psi(2^m)\big),
\end{align}
where the inclusion holds by the monotonicity of $\psi$.

We claim that the convergence assumption described in \eqref{eq:conver} is equivalent to
\begin{equation}\label{eq:convergence}
	\sum_{m=1}^\infty \mu \Big(\Lambda_{2^{m+1} } \big(r, \psi(2^m) \big) \Big) < \infty,
\end{equation}
which combined with Borel--Cantelli Lemma yields,
\[
\mu \bigg(\bigcap_{N=1}^\infty \bigcup_{m = N}^\infty  \Lambda_{2^{m+1}} \big(r, \psi(2^m)\big) \bigg) =0.
\]
By (\ref{cover}), it follows
\[
\mu(F(r, \psi))=0,
\]
which completes the proof of the convergence part of Theorem \ref{leb}.

Now, we prove the claim (see \eqref{eq:convergence}).
By Proposition \ref{Separated} (2), we obtain
\begin{align*}
	\mu\Big(\Lambda_{2^{m+1} } \big(r, \psi(2^m)\big)\Big)  &
	\leq \sum_{1 \leq k_1 < \cdots < k_r
	\leq 2^{m+1}} \mu\bigg(\bigcap_{j=1} ^r A_{2^m}^{k_j} \bigg)\\
	& \leq \sum_{1 \leq k_1 < \cdots < k_r
		\leq 2^{m+1}} K_r\mu (A_{2^m} ^1)^r\\
	&\lesssim  2^{rm} \mu (A_{2^m} ^1)^r,
\end{align*}
where the implied constant depends on $r$ only.
Since $\mu$ is strongly equivalent to the Lebesgue measure with a density function $h(x)=\dfrac{1}{\log2} \dfrac{1}{1+x}$, it follows that $\mu(A_{2^m} ^1) \asymp \mathcal{L} (A_{2^m} ^1)= \psi(2^m)^{-1} $.
Hence,
\begin{equation}\label{eq:estimate}
	\mu \big(\Lambda_{2^{m+1} } \big(r, \psi(2^m)\big) \big)  \lesssim (2^m  \psi(2^m)^{-1})^r.
\end{equation}
Note that
\begin{equation}\label{eq:bound}
\sum_{n=2^j}^{2^{j+1}-1} n^{r-1} \psi (n)^{-r}
\leq \big(2^{j+1} \psi(2^j)^{-1} \big)^{r}
\leq 2^{2r} \sum_{n=2^{j-1}}^{2^j-1} n^{r-1} \psi (n)^{-r},
\end{equation}
since $\psi$ is non-decreasing.
Hence, the convergence of $ \sum_{j=1}^{\infty} (2^j \psi(2^j)^{-1})^{r}$ is equivalent to that of $\sum_{n=1}^{\infty} n^{r-1} \psi(n)^{-r}$, and the proof of the claim is completed.

\subsection{Divergence part} \
In this subsection, assume that
\[
\sum_{n=1}^{\infty} n^{r-1} \psi(n)^{-r} = \infty.
\]

In order to estimate  the lower bound of $\mu(F(r, \psi))$, we construct a subset of  $F(r, \psi)$ by replacing $\psi$ with another function $\phi$, as follows:

\begin{lemma}\label{NF}
There exists a non-decreasing function $\phi \geq \psi$ such that $n \phi(n)^{-1} \to 0$ as $n \to \infty$ and
\[
\sum_{n=1}^{\infty} n^{r-1} \phi(n)^{-r} = \infty.
\]
\end{lemma}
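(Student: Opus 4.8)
The plan is to obtain $\phi$ by relaxing $\psi$ on dyadic scales, using a cap that decays slowly and is adapted to the way the divergent series accumulates its mass. By \eqref{eq:psi} we may take $\psi$ non-decreasing; set $\beta_j:=2^j/\psi(2^j)$ for $j\ge 1$. The dyadic comparison \eqref{eq:bound} shows that the hypothesis $\sum_n n^{r-1}\psi(n)^{-r}=\infty$ is equivalent to $\sum_{j\ge 1}\beta_j^r=\infty$. Since this series diverges, and hence so does every tail, I would partition $\{1,2,3,\ldots\}$ greedily into consecutive finite blocks $J_1,J_2,\ldots$, taking $J_k$ to be the shortest interval extending the previous one with $\sum_{j\in J_k}\beta_j^r\ge 1$; each $J_k$ is then finite and there are infinitely many of them. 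I would then put $\theta_j:=k^{1/r}$ for $j\in J_k$, and define $\phi(n):=\max\{\psi(n),\,n\,\theta_j\}$ for $n\in[2^j,2^{j+1})$.

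Next I would verify the three required properties. Monotonicity of $\phi$ holds because it is the pointwise maximum of the non-decreasing $\psi$ and of $n\mapsto n\,\theta_{j(n)}$, which is non-decreasing: it increases within each dyadic block, and at the junction into the next block one has $2^{j+1}\theta_{j+1}\ge 2^{j+1}\theta_j>(2^{j+1}-1)\theta_j$ since $\theta_{j+1}\ge\theta_j$. The inequality $\phi\ge\psi$ is immediate, and $n/\phi(n)\le 1/\theta_{j(n)}\to 0$ because $j\mapsto k(j)$ tends to infinity (each $J_k$ being finite), so that $\theta_j\to\infty$.

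The substantive point is the divergence $\sum_n n^{r-1}\phi(n)^{-r}=\infty$. For $n\in[2^j,2^{j+1})$ we have $\phi(n)\le\max\{\psi(2^{j+1}),\,2^{j+1}\theta_j\}$, and $\sum_{n=2^j}^{2^{j+1}-1}n^{r-1}\ge 2^{jr}$, so the contribution of the $j$-th dyadic block is at least $\min\{(\beta_{j+1}/2)^r,(2\theta_j)^{-r}\}\ge 2^{-r}\min\{\beta_{j+1}^r,\theta_j^{-r}\}$; using $\theta_j\le\theta_{j+1}$ and shifting the index, $\sum_n n^{r-1}\phi(n)^{-r}\ge 2^{-r}\sum_{j\ge 2}\min\{\beta_j^r,\theta_j^{-r}\}$. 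I would then estimate this last sum block by block over the $J_k$: inside $J_k$, either some $\beta_{j^*}^r>1\ge 1/k$, in which case that single term already contributes $1/k$; or every $\beta_j^r\le 1$ on $J_k$, in which case $\min\{\beta_j^r,1/k\}\ge\beta_j^r/k$ and the block contributes at least $\tfrac1k\sum_{j\in J_k}\beta_j^r\ge\tfrac1k$. In both cases $J_k$ contributes at least $1/k$, and $\sum_k 1/k=\infty$, which finishes the proof.

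I expect the obstacle to be conceptual rather than computational. The naive attempts — capping $\phi$ at $\max\{\psi(n),Cn\}$ for a fixed constant $C$, or capping the summand $n^{r-1}\phi(n)^{-r}$ at $1/(n\log n)$ — all fail when the divergent mass of $\sum_n n^{r-1}\psi(n)^{-r}$ is carried by a sparse set of exceptionally large summands (equivalently, by sparse $j$ with $\beta_j^r$ very large), since any fixed cap crushes such a spike to a summable, non-divergent amount. The remedy is precisely to let the cap relax slowly, equal to $1/k$ on the $k$-th unit-mass block, so that even an isolated spike still survives at level $1/k$; arranging the block decomposition so that this choice simultaneously keeps $\phi$ non-decreasing, forces $n/\phi(n)\to0$, and preserves the divergence is the heart of the argument.
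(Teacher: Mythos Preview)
Your proof is correct and follows essentially the same strategy as the paper's: pass to dyadic scales via $\beta_j=2^j/\psi(2^j)$, greedily cut $\{j\ge 1\}$ into consecutive blocks on which the accumulated mass $\sum\beta_j^r$ exceeds a threshold, and on the $k$-th block set $\phi(n)=\max\{\psi(n),\,n k^{1/r}\}$; then show each block contributes at least $1/k$ to the relevant series.

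The only cosmetic differences are: (i) the paper takes the block threshold to be $1/k$ rather than your $1$, which lets it split cases directly according to whether $\phi(2^j)$ equals $\psi(2^j)$ or $2^jk^{1/r}$, avoiding your auxiliary inequality $\min\{\beta_j^r,1/k\}\ge \beta_j^r/k$; and (ii) the paper estimates $\sum_j 2^{jr}\phi(2^j)^{-r}$ and then appeals once more to the dyadic equivalence, whereas you estimate $\sum_n n^{r-1}\phi(n)^{-r}$ directly and incur an index shift $j\mapsto j+1$. Neither difference is substantive. Your treatment of monotonicity at dyadic endpoints and of the domain of definition of $\phi$ is in fact slightly cleaner than the paper's (which, as written, leaves $\phi$ undefined on the gaps $(2^{m_{k+1}-1},2^{m_{k+1}})$).
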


\begin{proof}
Since $\psi$ is non-decreasing (see \eqref{eq:psi}), by the discussion presented in the last subsection (see \eqref{eq:bound}), one has
\begin{equation}
\sum_{n=1} ^{\infty} n^{r-1} \psi(n)^{-r} = \infty  \quad \Longleftrightarrow\quad \sum_{j=1}^{\infty} 2^{jr} \psi(2^j)^{-r} =\infty.
\end{equation}
Thus, we can choose a sequence of integers $(m_k)_{k\geq 1}$ with $m_1=1$ such that
\[
\sum_{j=m_k }^{m_{k+1}-1} 2^{jr} \psi(2^j)^{-r} \geq \frac{1}{k}.
\]
For any $ k \in \mathbb{N}$ and $n \in [2^{m_k}, 2^{m_{k+1}-1}] \cap \mathbb{N}$, define
\[
\phi(n) = \max \Big\{ \psi (n), nk^{1/r} \Big\}.
\]

It remains to verify $\phi$ satisfies the desired property.
Since both functions on the right-hand side are non-decreasing, their maximum $\phi$ is also non-decreasing.
Moreover, it is easy to see that $\phi(n) \geq \psi(n)$ for $n \in  \mathbb{N}$.
By the definition of $\phi$, it follows that
\[
n\phi(n)^{-1} = \min\Big\{ n\psi (n)^{-1},k^{-1/r}\Big\} \leq k^{-1/r}\to 0 \quad ( n \to \infty).
\]
Again, the monotonicity of $\phi$ implies that
\begin{equation}
\sum_{n=1}^{\infty} n^{r-1} \phi(n)^{-r} = \infty\quad\Longleftrightarrow\quad\sum_{j=1}^{\infty} 2^{jr} \phi(2^j)^{-r} =\infty.
\end{equation}
Thus, the conclusion of the lemma will follow if the series on the right-hand side diverges. Note that
\begin{align*}
\sum_{j=1}^{\infty} 2^{jr} \phi(2^j)^{-r}=\sum_{k=1}^{\infty} \sum_{j=m_k}^{m_{k+1}-1} 2^{jr} \phi(2^{j})^{-r}.
\end{align*}
For any $k\ge 1$, by the definition of $\phi$, there are two cases to discuss.

Case 1: There exists $j\in [m_k,m_{k+1}-1]$ such that $\phi(2^j)= 2^j k^{1/r}$. Then,
\[
 \sum_{j=m_k}^{m_{k+1}-1} 2^{jr} \phi(2^{j})^{-r} \geq\frac{1}{k}.
\]

Case 2: For any $j\in [m_k,m_{k+1}-1]$, we have $\phi(2^j)= \psi(2^j)$. By our choice of the sequence of $\{m_k \}_{k\ge 0}$,
\[
 \sum_{j=m_k}^{m_{k+1}-1} 2^{jr} \phi(2^{j})^{-r}= \sum_{j=m_k}^{m_{k+1}-1} 2^{jr} \psi(2^{j})^{-r}\ge\frac{1}{k}.
\]
Combining the estimates given in Cases 1--2, we deduce that
\[\sum_{k=1}^{\infty} \sum_{j=m_k}^{m_{k+1}-1} 2^{jr} \phi(2^{j})^{-r}\ge \sum_{k=1}^{\infty}\frac{1}{k}=\infty,\]
 which completes the proof.
\end{proof}

By Proposition \ref{p:mon} and Lemma \ref{NF}, it is safe to assume that
\begin{equation}
	\psi~\text{is non-decreasing}\quad\text{and}\quad n \psi(n) ^{-1} \to 0 \text{ as } n \to \infty.
\end{equation}
Moreover, we still has
\[\sum_{n=1}^{\infty} n^{r-1} \psi(n)^{-r} = \infty.\]

For $m \in \mathbb{N}$ with $m \geq 10$, let
\[
\mathcal{U}_m := \left\{ (k_1, \ldots, k_r) : 2^{m-1} +m^{2} < k_1 < \cdots < k_r \leq 2^{m} \text{ and}\  S_{2^m} (k_1, k_2, \ldots, k_r)=r\right\}.
\]
\begin{remark}
	To ensure that the sets under consideration are quasi-independent (see Lemma \ref{l: independent}), here we require $k_1>2^{m-1}+m^2$ instead of $k_1>2^{m-1}$.
\end{remark}
Recall that
$A_{n}^{k}= \{x \in [0,1): a_k(x) \geq \psi(n)\}$
and
\[
\Lambda_n \big(r, \psi(n)\big) = \bigcup_{1\leq k_1 <k_2 <\cdots < k_r \leq n} \bigcap_{j=1}^{r} A_{n}^{k_j}.
\]
Define
\[
F_m :=  \bigcup_{ (k_1, \ldots, k_r) \in \mathcal{U}_m } \bigcap_{j=1}^{r} A_{2^m}^{k_j}.
\]
Clearly, $F_m \subset \Lambda_{2^m} (r, \psi (2^m))$.
By the monotonicity of $\psi$,
\begin{equation}\label{eq:cup}
	\bigcap_{N=1}^\infty \bigcup_{m=N}^\infty F_m \subset \bigcap_{N=1}^\infty \bigcup_{m=N}^\infty \Lambda_{2^m} \big(r, \psi (2^m)\big) \subset F(r, \psi).
\end{equation}

We aim to show that $\bigcap_{N=1}^\infty \bigcup_{m=N}^\infty F_m$ has full measure
if the series $\sum_{n=1}^{\infty} n^{r-1} \psi(n)^{-r}$ diverges.

\begin{lemma}\label{ME}
	We have
\[
\mu(F_m) = \frac{2^{r(m-1)}} {r !} \big(1+ o(1)\big) \mu(A_{2^m}^1)^r,
\]
where the error term $o(1)$ tends to $0$ as $m\to\infty$.
\end{lemma}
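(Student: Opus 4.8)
The plan is to trap $\mu(F_m)$ between the first moment $\sum_{\mathbf k\in\mathcal U_m}\mu(E_{\mathbf k})$ (upper bound) and the Bonferroni lower bound ``first moment minus pair sum'', and to show both are $\frac{2^{r(m-1)}}{r!}(1+o(1))\mu(A_{2^m}^1)^r$, where $E_{\mathbf k}:=\bigcap_{j=1}^r A_{2^m}^{k_j}$ for $\mathbf k=(k_1,\dots,k_r)\in\mathcal U_m$. Every $\mathbf k\in\mathcal U_m$ satisfies $S_{2^m}(k_1,\dots,k_r)=r$, so Proposition \ref{Separated}(1) with $n=2^m$ gives $\mu(E_{\mathbf k})=(1+o(1))\mu(A_{2^m}^1)^r$ uniformly in $\mathbf k\in\mathcal U_m$, the error coming from $\gamma^{\log 2^m}\to0$. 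Hence $\sum_{\mathbf k\in\mathcal U_m}\mu(E_{\mathbf k})=(\#\mathcal U_m)(1+o(1))\mu(A_{2^m}^1)^r$, and since $F_m=\bigcup_{\mathbf k\in\mathcal U_m}E_{\mathbf k}$ we immediately get $\mu(F_m)\le(\#\mathcal U_m)(1+o(1))\mu(A_{2^m}^1)^r$.

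Next I would count $\mathcal U_m$. Because $k_1>2^{m-1}+m^2$ and $m\ge 10$, the first gap $k_1-k_0-1=k_1-2$ already exceeds $(\log 2^m)^2$, so the condition $S_{2^m}(k_1,\dots,k_r)=r$ is equivalent to $k_{j+1}-k_j-1\ge (\log 2^m)^2$ for $j=1,\dots,r-1$. Thus $\mathcal U_m$ is the set of strictly increasing $r$-tuples from the $N:=2^{m-1}-m^2$ integers of $(2^{m-1}+m^2,2^m]$ all of whose consecutive differences are $\ge (\log 2^m)^2+1$. The shift $k_j\mapsto k_j-(j-1)\lceil(\log 2^m)^2\rceil$ identifies these with the strictly increasing $r$-tuples from a block of $N-(r-1)(\log 2^m)^2+O(1)$ integers, so $\#\mathcal U_m=\binom{N-(r-1)(\log 2^m)^2+O(1)}{r}$. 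As $(\log 2^m)^2=(m\log 2)^2=o(2^{m-1})$, this equals $\binom{2^{m-1}(1+o(1))}{r}=\frac{2^{r(m-1)}}{r!}(1+o(1))$, which makes the upper bound explicit.

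For the lower bound I would invoke the Bonferroni inequality $\mu(F_m)\ge\sum_{\mathbf k\in\mathcal U_m}\mu(E_{\mathbf k})-\sum_{\{\mathbf k,\mathbf k'\}}\mu(E_{\mathbf k}\cap E_{\mathbf k'})$, the last sum over unordered pairs of distinct tuples in $\mathcal U_m$. For such a pair set $E:=\{k_1,\dots,k_r\}\cup\{k_1',\dots,k_r'\}$, so $r+1\le |E|\le 2r$ and $E_{\mathbf k}\cap E_{\mathbf k'}=\bigcap_{\ell\in E}A_{2^m}^\ell$; Proposition \ref{Separated}(2) (with $|E|$ in place of $r$) bounds its measure by $K_{|E|}\mu(A_{2^m}^1)^{|E|}$. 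Grouping pairs by $s=|E|$, the number with $|E|=s$ is $\lesssim N^s$ (choose $\mathbf k$ in $\lesssim N^r$ ways, then the $2r-s$ shared and $s-r$ new coordinates of $\mathbf k'$ in $\lesssim N^{s-r}$ ways), so the pair sum is $\lesssim\sum_{s=r+1}^{2r}\bigl(N\mu(A_{2^m}^1)\bigr)^s$. Writing $\delta_m:=2^{m-1}\mu(A_{2^m}^1)\asymp 2^{m-1}\psi(2^m)^{-1}$, the reduction already in force (after Lemma \ref{NF} we may assume $n\psi(n)^{-1}\to0$) gives $\delta_m\to0$, hence the pair sum is $\lesssim\delta_m^{r+1}$; since $\sum_{\mathbf k\in\mathcal U_m}\mu(E_{\mathbf k})\asymp\frac{2^{r(m-1)}}{r!}\mu(A_{2^m}^1)^r\asymp\delta_m^r$, the pair sum is $o(1)$ times the main term. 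Therefore $\mu(F_m)\ge\frac{2^{r(m-1)}}{r!}(1-o(1))\mu(A_{2^m}^1)^r$, and combining with the upper bound proves the lemma. (Here I use $\mu(A_{2^m}^1)\asymp\mathcal L(A_{2^m}^1)\asymp\psi(2^m)^{-1}$ as recorded earlier.)

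The one genuinely delicate step is the second-moment estimate: bounding $\mu(E_{\mathbf k}\cap E_{\mathbf k'})$ only through $|E|\ge r+1$ is by itself far too weak, and one really must organise pairs by the number of shared indices so that each class contributes a clean factor $\bigl(N\mu(A_{2^m}^1)\bigr)^{|E|}$ with $|E|\ge r+1$; it is exactly the hypothesis $2^m\psi(2^m)^{-1}\to0$ secured by Lemma \ref{NF} that forces $\delta_m\to0$ and thus renders the total pair contribution of strictly lower order than $\delta_m^r$. The rest --- the uniform two-sided bound of Proposition \ref{Separated}(1) and the elementary lattice-point count for $\#\mathcal U_m$ --- is routine bookkeeping.
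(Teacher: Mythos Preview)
Your proof is correct and takes essentially the same approach as the paper: the upper bound is the union bound combined with Proposition \ref{Separated}(1) and the asymptotic count $\#\mathcal U_m=\frac{2^{r(m-1)}}{r!}(1+o(1))$, and the lower bound is Bonferroni with the pair sum controlled via Proposition \ref{Separated}(2) and the standing reduction $n\psi(n)^{-1}\to 0$ from Lemma \ref{NF}. The only cosmetic difference is that you stratify the pair sum by $s=|E|$ over $r+1\le s\le 2r$, whereas the paper passes directly to a single sum over $(r+1)$-tuples; both routes yield $J_m=o\bigl(\frac{2^{r(m-1)}}{r!}\mu(A_{2^m}^1)^r\bigr)$.
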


\begin{proof}
We first prove that the set $\mathcal{U}_m$ contains approximately $\frac{2^{r(m-1)}}{r!}\big(1+o(1)\big)$ elements, where the term $o(1)$ tends to zero as $m \to \infty$.
Specifically, we will select $r$ positions from the interval $[2^{m-1} + m^2,\, 2^m]$ for arrangement.

For the first selection, the number of available choices is
\[
2^m - (2^{m-1} + m^2) = 2^{m-1} - m^2 = 2^{m-1}\big(1+o(1)\big),
\]
Similarly, for the second selection, taking into account the separation condition $k_{j+1} - k_j \geq (m \log 2)^2$, the number of available choices becomes
\[
2^{m-1} - m^2 - 1 - 2m^2 = 2^{m-1} - 3m^2 - 1 = 2^{m-1}\big(1+o(1)\big).
\]
Proceeding inductively, each subsequent choice also admits $2^{m-1}\big(1+o(1)\big)$ possibilities.
Thus, the total number of arrangements satisfies
\begin{equation}\label{eq:cardinum}
	\# \mathcal{U}_m = \frac{\big(2^{m-1}\big(1+o(1)\big)\big)^r}{r!}
	= \frac{2^{r(m-1)} \big(1+o(1)\big)}{r!}.
\end{equation}
Therefore, by the definition of $F_m$ and  Proposition \ref{Separated} (1),
\begin{equation}\label{eq:estmiateIm}
	\mu(F_m) \leq \sum_{(k_1, \ldots, k_r) \in \mathcal{U}_m} \mu\bigg(\bigcap_{j=1}^r A_{2^m}^{k_j} \bigg)=\frac{2^{r(m-1)}} {r !} \big(1+ o(1)\big) \mu(A_{2^m}^1)^r.
\end{equation}

The lower bound estimate for $\mu(F_m)$ is more delicate, as the sets involved in the last inequality are not necessarily disjoint. By an elementary property in measure theory,
\begin{align*}
  \mu(F_m) &  \geq \sum_{(k_1, \ldots, k_r) \in \mathcal{U}_m} \mu \bigg(\bigcap_{j=1}^r A_{2^m}^{k_j} \bigg) -
 \sum_{\substack{ (k_1, \ldots, k_r) \in \mathcal{U}_m\\
  (k_1 ', \ldots, k_r ') \in \mathcal{U}_m \\
  (k_1, \ldots, k_r) \neq (k_1 ', \ldots, k_r ')} }   \mu \bigg( \bigg[\bigcap_{j =1} ^{r} A_{2^m}^{k_j}\bigg] \cap \bigg[\bigcap_{j=1}^{r} A_{2^m}^{k_j '} \bigg] \bigg) \\
  &:= I_m -J_m .
\end{align*}
The estimate for $I_m$ is already provided in \eqref{eq:estmiateIm}, and we will now focus on estimating $J_m$. Observe that for any $(k_1, \ldots, k_r) \neq (k_1 ', \ldots, k_r ')$, the intersection
\[
\bigg[\bigcap_{j =1} ^{r} A_{2^m}^{k_j}\bigg] \cap \bigg[\bigcap_{j=1}^{r} A_{2^m}^{k_j '}\bigg] = \bigcap_{i \in \{k_1, \ldots, k_r\} \cup \{ k_1', \ldots, k_r'\}} A_{2^m}^{i}
\]
involves at least $r+1$ sets. It then follows that
\begin{align*}
	J_m & \leq \sum_{k_1 < \cdots < k_{r+1}} \mu \bigg(\bigcap_{j=1}^{r+1} A_{2^m}^{k_j}\bigg)  \\
	& \leq C_{2^m}^{r+1}\cdot K_{r+1} \mu( A_{2^m}^1)^{r+1}\\
	& \lesssim  \frac{2^{r(m-1)}}{r!} 2^{m}\cdot K_{r+1} \mu( A_{2^m}^1)^{r}\cdot  \psi(2^m)^{-1} \\
	&= o(1) \frac{2^{r(m-1)}}{r!} \mu( A_{2^m}^1)^r,
\end{align*}
where Proposition~\ref{Separated}~(2) is used in deriving the second inequality, and the fact that $n \psi^{-1}(n) \to 0$ as $n \to \infty$ is applied in the final equality.

Combining the estimate of $I_m$ and $J_m$, the conclusion follows.
\end{proof}

Now, we derive the following quasi-independent result.

\begin{lemma}\label{l: independent}
For any $m < n$ with $m \geq 10$, one has
\[
\mu(F_m \cap F_n ) \leq  \mu(F_m) \mu(F_n) \big(1 + o(1)\big),
\]
where the error term $o(1)$ tends to $0$ as $m\to\infty$.
\end{lemma}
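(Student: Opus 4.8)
The plan is to reduce the estimate to the ``well-separated'' bound of Proposition~\ref{Separated}(3) and then to trade the combinatorial count of $\mathcal{U}_m$ against $\mu(F_m)$ using \eqref{eq:cardinum} and Lemma~\ref{ME}.

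First I would record that, when $m<n$, the two blocks of indices occurring in $F_m$ and $F_n$ are widely separated. Indeed, if $(k_1,\ldots,k_r)\in\mathcal{U}_m$ and $(l_1,\ldots,l_r)\in\mathcal{U}_n$, then $k_r\le 2^m$ while $l_1>2^{n-1}+n^2\ge 2^m+(m+1)^2$, so that $k_1<\cdots<k_r<l_1<\cdots<l_r$ and $l_1-k_r>(m+1)^2>m^2$; moreover $2^{m-1}<k_\alpha\le 2^m$ and $2^{n-1}<l_\beta\le 2^n$ for every $\alpha,\beta$, and $S_{2^m}(k_1,\ldots,k_r)=S_{2^n}(l_1,\ldots,l_r)=r$ by the very definition of $\mathcal{U}_m$ and $\mathcal{U}_n$. (This is exactly the reason for inserting the offsets $m^2,n^2$ into these index sets, as noted in the Remark.) Thus all hypotheses of Proposition~\ref{Separated}(3) hold with $i=m$, $j=n$, and we get
\[
\mu\!\left(\Big[\bigcap_{\alpha=1}^{r}A_{2^m}^{k_\alpha}\Big]\cap\Big[\bigcap_{\beta=1}^{r}A_{2^n}^{l_\beta}\Big]\right)\le \mu(A_{2^m}^{1})^r\,\mu(A_{2^n}^{1})^r\,(1+c_3\gamma^m).
\]

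Second, since $F_m\cap F_n=\bigcup_{(k)\in\mathcal{U}_m}\bigcup_{(l)\in\mathcal{U}_n}\big([\bigcap_\alpha A_{2^m}^{k_\alpha}]\cap[\bigcap_\beta A_{2^n}^{l_\beta}]\big)$, subadditivity of $\mu$ gives
\[
\mu(F_m\cap F_n)\le (\#\mathcal{U}_m)(\#\mathcal{U}_n)\,\mu(A_{2^m}^{1})^r\,\mu(A_{2^n}^{1})^r\,(1+c_3\gamma^m).
\]
Now I would convert the counts into measures: by \eqref{eq:cardinum}, $\#\mathcal{U}_m=\frac{2^{r(m-1)}}{r!}(1+o(1))$, while Lemma~\ref{ME} gives $\mu(F_m)=\frac{2^{r(m-1)}}{r!}(1+o(1))\mu(A_{2^m}^{1})^r$, and dividing one asymptotic by the other yields $(\#\mathcal{U}_m)\mu(A_{2^m}^{1})^r=(1+o(1))\mu(F_m)$ as $m\to\infty$, and similarly $(\#\mathcal{U}_n)\mu(A_{2^n}^{1})^r=(1+o(1))\mu(F_n)$ as $n\to\infty$. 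Substituting these and using $\gamma^m\to 0$ produces $\mu(F_m\cap F_n)\le\mu(F_m)\mu(F_n)(1+o(1))$.

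The computations are routine; the one place that needs care is the last step, where the error term coming from the $F_n$-factor must be uniform in $n$, since in the final inequality $n$ ranges over all integers larger than $m$. This is harmless: the $o(1)$ in \eqref{eq:cardinum} and in Lemma~\ref{ME} is a fixed null sequence indexed by the level, so its supremum over levels exceeding $m$ still tends to $0$ as $m\to\infty$; hence the combined error is genuinely $o(1)$ as $m\to\infty$, uniformly over $n>m$, which is what the statement asserts.
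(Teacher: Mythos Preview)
Your proposal is correct and follows essentially the same route as the paper: verify that the $m^2$-offset in $\mathcal{U}_m$ forces $l_1-k_r>m^2$, apply Proposition~\ref{Separated}(3) termwise, sum via subadditivity, and then trade $\#\mathcal{U}_m\cdot\mu(A_{2^m}^1)^r$ for $(1+o(1))\mu(F_m)$ using \eqref{eq:cardinum} and Lemma~\ref{ME}. Your final paragraph on the uniformity of the $o(1)$ in $n>m$ is a welcome clarification that the paper leaves implicit.
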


\begin{proof}
	By the definition of $E_m$, one has
	\[
	F_m \cap F_n =\bigg[\bigcup_{ (k_1, \ldots, k_r) \in \mathcal{U}_m } \bigcap_{j=1}^{r} A_{2^m}^{k_j} \bigg] \cap \bigg[\bigcup_{(l_1, \ldots, l_r) \in \mathcal{U}_n } \bigcap_{j=1}^{r} A_{2^n}^{l_j}\bigg].
	\]
	Since $m<n$, by the definitions of $\mathcal{U}_m$ and $\mathcal{U}_n$, we have
	\[l_1>2^{n-1}+n^2>2^m+m^2>k_r+m^2.\]
	This means that Proposition \ref{Separated} (3) is applicable. By that conclusion,
\begin{align*}
  &\mu(F_m \cap F_n) \\
  \leq &  \sum_{(k_1, \ldots, k_r) \in \mathcal{U}_m} \sum_{ (l_1, \ldots, l_r) \in \mathcal{U}_n } \mu\bigg( \bigg[\bigcap_{j=1}^{r} A_{2^m}^{k_j}\bigg] \cap \bigg[\bigcap_{j=1}^{r} A_{2^n}^{l_j}\bigg]\bigg)\\
    \leq & \#\mathcal{U}_m\cdot \#\mathcal{U}_n\cdot \mu \big(A_{2^m}^1\big)^r \mu \big(A_{2^n}^1\big)^r(1+ c_3 \gamma^m)\\
   =& \big(1+o(1) \big) \mu(F_m) \mu(F_n),
\end{align*}
where we use $\# \mathcal{U}_m =\dfrac{2^{r(m-1)} \big(1 + o(1) \big)}{r!}$ (see \eqref{eq:cardinum}) and Lemma \ref{ME} in the last equality.
\end{proof}

\begin{proof}[Proof of Theorem \ref{leb}: divergence part]
For any $\epsilon>0$, by Lemma \ref{l: independent}, there exists $M=M(\epsilon)>0$ such that for any $n>m>M$,
\[\mu(F_m \cap F_n ) \leq  \mu(F_m) \mu(F_n)(1 + \epsilon).\]
Since the series $\sum_{n=1}^{\infty} n^{r-1} \psi(n)^{-r}$ diverges, by Lemma \ref{l:quasiind} and \eqref{eq:cup},
\[\mu\big(F(r, \psi)\big)\geq \mu \Big(\limsup_{m \to \infty} F_m\Big)\ge 1/(1 + \epsilon).\]
Letting $\epsilon\to 0$, we conclude
\[\mu\big(F(r, \psi)\big)\ge 1,\]
which completes the proof.
\end{proof}

\section{ \emph{Proof of Theorem \ref{Hausdorff}}}

The computation of the Hausdorff dimension of a set typically involves two steps: establishing upper and lower bounds separately.
The upper bound is usually obtained by constructing a suitable covering; in particular, coverings of limsup sets often arise naturally in this context.
To obtain the lower bound, one generally constructs an appropriate subset.
However, in our framework, we are fortunate to be able to apply known results due to Hussain and Shulga \cite{Hussain23},  {\L}uczak \cite{Luczak1997}, and Feng, Wu, Liang, and Tseng \cite{Feng1997}  (see Theorems \ref{t:dimension} and \ref{thm:dim}).

In this section, we may continue to that $\psi(n)$ is non-decreasing, as guaranteed by Proposition \ref{p:mon}.
Under this assumption, the quantities $\log B$ and $\log b$ can be rewritten as
\[
\log B: = \liminf_{n \to \infty} \frac{\log \psi(n)}{n} \quad \text{and} \quad \log b: = \liminf_{n \to \infty} \frac{\log \log \psi(n)}{n}.
\]

Recall that
\[
s(r, B)=\inf \Big\{ s \geq 0: P\Big(T, - s \log |T'|-\big(s+(2s-1)(r-1)\big) \log B\Big) \leq 0 \Big\}.
\]

In what follows, we compute the Hausdorff dimension of the set $F(r, \psi)$ by analyzing three cases according to the value of $B$.

\subsection{The Case $1< B < \infty$}\

We begin by estimating the upper bound of the Hausdorff dimension of the set $F(r, \psi)$.

For any $n\in\mathbb N$ and $a\ge 1$ , define
\begin{align*}
\Gamma_n(a) = \big\{ x \in [0,1): a_n(x) \geq  a \text{ and } &\exists~ 1 \leq k_1 < \cdots <k_{r-1} < n\\
&~\text{s.t.}~a_{k_j}(x) \geq a \text{ for }1 \leq j \leq r-1 \big\}.
\end{align*}

\begin{lemma}\label{l:cover}
We have
	\[
	F(r, \psi) =\bigcap_{N = 1}^{\infty} \bigcup_{n =N}^{\infty} \Gamma_n(\psi(n)).
	\]
\end{lemma}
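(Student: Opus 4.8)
The plan is to prove the two inclusions separately; the inclusion ``$\supseteq$'' is a direct unwinding of definitions, while ``$\subseteq$'' requires one small extra observation. For ``$\supseteq$'', suppose $x\in\bigcap_{N=1}^\infty\bigcup_{n=N}^\infty\Gamma_n(\psi(n))$, so that $x\in\Gamma_n(\psi(n))$ for infinitely many $n$. For each such $n$ the definition of $\Gamma_n$ gives $a_n(x)\ge\psi(n)$ together with indices $1\le k_1<\cdots<k_{r-1}<n$ satisfying $a_{k_j}(x)\ge\psi(n)$; putting $k_r:=n$ produces $1\le k_1<\cdots<k_r\le n$ with $a_{k_i}(x)\ge\psi(n)$ for all $i$, i.e. $L_n(x,\psi(n))\ge r$. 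Since this holds for infinitely many $n$, we get $x\in F(r,\psi)$.

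For ``$\subseteq$'', fix $x\in F(r,\psi)$. For each of the infinitely many $n$ with $L_n(x,\psi(n))\ge r$, choose $1\le k_1^{(n)}<\cdots<k_r^{(n)}\le n$ with $a_{k_i^{(n)}}(x)\ge\psi(n)$, and set $m_n:=k_r^{(n)}$. Since $m_n\le n$ and $\psi$ may be assumed non-decreasing (Proposition \ref{p:mon}), we have $\psi(m_n)\le\psi(n)$, hence $a_{m_n}(x)\ge\psi(n)\ge\psi(m_n)$, while the $r-1$ indices $k_1^{(n)}<\cdots<k_{r-1}^{(n)}<m_n$ satisfy $a_{k_j^{(n)}}(x)\ge\psi(n)\ge\psi(m_n)$. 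By the definition of $\Gamma_{m_n}$ this means $x\in\Gamma_{m_n}(\psi(m_n))$. Thus it only remains to verify that the set $\{m_n\}$ is unbounded: once this is known, $x$ lies in $\Gamma_m(\psi(m))$ for infinitely many $m$, hence in $\bigcap_{N=1}^\infty\bigcup_{n=N}^\infty\Gamma_n(\psi(n))$.

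The unboundedness of $\{m_n\}$ is the only step that is not purely formal, and it is where one must use that we are in the regime $1<B<\infty$, so that the liminf condition forces $\psi(n)\to\infty$. (Indeed the statement genuinely fails for bounded $\psi$: a point whose only partial quotients $\ge 2$ are $a_1,\dots,a_r$ lies in $F(r,2)$ but in $\Gamma_n(2)$ for $n=r$ only.) Concretely: fix $M\in\mathbb N$, put $c_M:=\max_{1\le k\le M}a_k(x)<\infty$, and pick $N_M$ with $\psi(n)>c_M$ for all $n\ge N_M$; then for $n\ge N_M$ any index $k$ with $a_k(x)\ge\psi(n)$ must satisfy $k>M$, so in particular $m_n=k_r^{(n)}>M$. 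Since $M$ is arbitrary and infinitely many $n$ witness $x\in F(r,\psi)$, the set $\{m_n\}$ is unbounded, which finishes the proof. The main obstacle, then, is not computational at all but lies in spotting that the $\psi\to\infty$ hypothesis is exactly what makes the ``largest witnessing index'' $m_n$ escape to infinity.
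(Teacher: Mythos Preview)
Your proof is correct and follows essentially the same approach as the paper: both directions are handled identically, with the key idea in ``$\subseteq$'' being to take the largest witnessing index $m_n=k_r^{(n)}$, use the monotonicity of $\psi$ to pass from $\psi(n)$ to $\psi(m_n)$, and then invoke $\psi(n)\to\infty$ to ensure these indices are unbounded. Your version is slightly more explicit about the unboundedness step (via $c_M=\max_{k\le M}a_k(x)$) and helpfully includes a counterexample showing the hypothesis $\psi\to\infty$ is genuinely needed, whereas the paper simply remarks that a fixed $k_r^{i_0}$ cannot recur for infinitely many $n_i$.
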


\begin{proof}
Clearly, the set on the right-hand side is contained in $F(r, \psi)$
 by the definition of $ F(r, \psi)$.

Conversely, let $x \in F(r, \psi)$ be arbitrary.
Then there exists an infinite sequence $\{n_i\}_{i \geq 1} \subset \mathbb{N}$
such that for each $i \geq 1$, there exist indices $ k_1^i, \ldots, k_r^i $
satisfying
\[
1 \leq  k_1^i < \cdots < k_r^i \leq n_i ,
\]
with
\[
a _{k_j^i}(x) \geq \psi(n_i) \quad \text{for~}  1 \leq  j \leq r.
\]
Since $\psi(n) \to \infty$ as $n \to \infty$,
a fixed index $k_r^{i_0}$ cannot correspond to infinitely many $n_i$.
Therefore, the sequence $\{k_r^i\}_{i \geq 1}$ must contains infinitely many distinct terms.
For each $k_j^i$, by the monotonicity of $\psi$ and the previous inequality, we have
	\[
	a _{k_j^i}(x) \geq \psi(n_i) \geq \psi(k_r^i)\quad \text{for}~1\leq j \leq r-1,
	\]
	and
	\[
	a _{k_r^i}(x) \geq \psi(k_r^i).
	\]
Hence, $x$ belongs to the set on the right-hand side.
This completes the proof.
\end{proof}

By the definition of $B$, for any $\varepsilon>0$, there exists $N_0 \in \mathbb{N}$ such that for all $ n \geq N_0$, we have $\psi(n) \geq (B-\varepsilon)^n$.
Therefore,
\begin{equation}\label{eq:contain}
F(r, \psi) \subset  \big\{ x \in [0, 1): L_n (x, (B-\varepsilon)^n) \geq r ~\text{for i.m.}~n \big\},
\end{equation}
where
\[
L_n (x, (B-\varepsilon)^n): = \# \{k \leq n : a_k(x) \geq (B-\varepsilon)^n\}.
\]
For convenience, define the set
\[
F(r, B): = \{ x \in [0, 1): L_n (x, B^n) \geq r ~\text{for i.m.}~n \}.
\]
Then by \eqref{eq:contain}, we have $ F(r, \psi) \subset F(r, B-\varepsilon) $, and thus
\begin{equation}\label{eq:ub}
\dim_{\mathrm{H}} F(r, \psi) \leq \dim_{\mathrm{H}} F(r, B-\varepsilon).
\end{equation}

In view of \eqref{eq:ub}, our next goal is to estimate the upper bound of the Hausdorff dimension of $F(r, B)$ for $1<B<\infty$. Applying Lemma \ref{l:cover} with  $\psi(n)=B^n$ yields
\begin{equation}\label{eq:limsup}
	F(r, B) =\bigcap_{N =1}^\infty \bigcup_{n=N}^\infty \Gamma_n(B^n).
\end{equation}
By the definition of $\Gamma_n(B^n)$,
\begin{align}
\Gamma_n(B^n)&=\bigcup_{1 \leq  k_1 < \cdots < k_{r-1}<n} \bigcup_{ \substack{ \bm a \in \mathbb{N}^{n-1}\\
		a_{k_j} \geq B^n, 1 \leq j \leq r-1}}  \bigcup_{a_n \geq B^n} I_{n}(\bm a, a_n),\notag\\
&:=\bigcup_{1 \leq  k_1 < \cdots < k_{r-1}<n} \bigcup_{ \substack{ \bm a \in \mathbb{N}^{n-1}\\
		a_{k_j} \geq B^n, 1 \leq j \leq r-1}} J_{n-1}(\bm a).\label{eq:gamman}
\end{align}

We are now ready to establish the upper bound for the Hausdorff dimension of the set $F(r,\psi)$.

\begin{proof}[Proof of upper bound]
	By \eqref{eq:ub}, it suffices to verify the upper bound for the Hausdorff dimension of $F(r, B)$.
	Let $t > s(r, B)$.
	By the definition of $s(r,B)$,
	\[
	P(T, -t \log |T'|-(t+(2t-1)(r-1) )\log B) =\lim_{n \to \infty} \frac{1}{n} \log \sum_{\bm a \in \mathbb{N}^n}   \frac{1}{B^{n(t+(2t-1)(r-1))}q_n^{2t}(\bm a)} < 0.
	\]
	This implies that there exists $\delta>0$ such that the sum
	\begin{equation}\label{eq:sum}
		\sum_{\bm a \in \mathbb{N}^n}   \frac{1}{B^{n(t+(2t-1)(r-1))}q_n^{2t}(\bm a)} < e^{-n\delta} \quad \text{whenever $n$ is large enough}.
	\end{equation}

	Note that for any $N\ge 1$, $\bigcup_{n=N}^\infty \Gamma_n(B^n)$ forms a cover of $F(r,B)$.
	To estimate upper bound of the Hausdorff dimension of $F(r, B)$, it is necessary to find an optimal cover for each $\Gamma_n(B^n)$. For this purpose, using the expression of $\Gamma_n(B^n)$ given in \eqref{eq:gamman}, we first calculate the length of $J_{n-1}(\bm a)$, namely
	\begin{align} \label{eq:length}
		|J_{n-1}(\bm a)| &\leq \sum_{a_n \geq B^n} \frac{1}{q_{n}^2 (\bm a, a_{n})}\leq \sum_{a_n \geq B^n} \frac{1}{q_{n-1}^2 (\bm a) a_n^2}  \notag \\
		& \asymp \frac{1}{q_{n-1}^2 (\bm a) B^n}.
	\end{align}
It then follows that the $t$-volume cover of $\Gamma_n(B^n)$ is bounded from above by
\begin{equation}\label{eq:tcoverofgamma}
	\sum_{1 \leq k_1 < \cdots < k_{r-1} < n} \sum_{ \substack{ \bm a \in \mathbb{N}^{n-1}\\
			a_{k_j} \geq B^n, 1 \leq j \leq r-1}}   \frac{1}{q_{n-1}^{2t} (\bm a) B^{nt}}.
\end{equation}
Fix integers $k_1, k_2, \ldots, k_{r-1}$ with $1< k_1<\cdots < k_{r-1}<n$, and consider the inner sum
\[
\sum_{\substack{ \bm a \in \mathbb{N}^{n-1}\\
		a_{k_j} \geq B^n, 1 \leq j \leq r-1}}   \frac{1}{q_{n-1}^{2t} (\bm a) B^{nt}}.
\]
By Proposition \ref{cylinder} (3), we have
\begin{align*}
	q_{n-1} (\bm a) \geq \frac{q_{n-r} (\widetilde{\bm a}) a_{k_1} \cdots a_{k_{r-1}}}{2^{r-1} },
\end{align*}
where $\widetilde{\bm a}$ denotes the sequence obtained from $\bm a$ by removing the words $a_{k_1}, \ldots, a_{k_{r-1}}$ from it.
Therefore,
\begin{align*}
	&	\sum_{\substack{ \bm a \in \mathbb{N}^{n-1}\\
			a_{k_j} \geq B^n, 1 \leq j \leq r-1}}   \frac{1}{q_{n-1}^{2t} (\bm a) B^{nt}} \\
	&\leq \sum_{ \widetilde{\bm a} \in \mathbb{N}^{n-r}} \sum_{a_{k_j} \geq B^n, 1 \leq j \leq r-1} \frac{2^{rt}}{q_{n-r}^{2t} (\widetilde{\bm a}) a_{k_1}^{2t} \cdots a_{k_{r-1}}^{2t} B^{nt}} \\
	&\leq \sum_{\widetilde{\bm a} \in \mathbb{N}^{n-r}} \frac{2^{rt}}{q_{n-r}^{2t} ( \widetilde{\bm a}) B^{n(2t-1)(r-1)} B^{nt}}\\
	&= \sum_{\widetilde{\bm a} \in \mathbb{N}^{n-r}} \frac{2^{rt}}{q_{n-r}^{2t} ( \widetilde{\bm a}) B^{n(t+(2t-1)(r-1))}}.
\end{align*}
Since the number of possible combinations $(k_1,  \ldots, k_{r-1})$ is at most $\tbinom{n}{r}  \leq n^r$ and note that the last summation is independent of our initial choice $k_1<\cdots<k_{r-1}$, it follows that \eqref{eq:tcoverofgamma} is bounded by
\[\begin{split}
	\le&\sum_{ \bm a \in \mathbb{N}^{n-r}} \frac{n^r 2^{rt}}{q_{n-r}^{2t} (\bm a ) B^{n(t+(2t-1)(r-1))} }\\
	\lesssim &n^r  \sum_{ \bm a \in \mathbb{N}^{n-r}} \frac{1}{q_{n-r}^{2t} (\bm a) B^{(n-r)(t+(2t-1)(r-1))} }.
\end{split}\]
Now, the $t$-dimensional Hausdorff measure of $F(r, B)$ can be estimated as follows:
	\begin{align*}
		\mathcal{H}^{t}( F(r,B)) & \leq \liminf_{N \to \infty} \sum_{n=N}^{\infty} n^r  \sum_{ \bm a \in \mathbb{N}^{n-r}} \frac{1}{q_{n-r}^{2t} (\bm a) B^{(n-r)(t+(2t-1)(r-1))} } \\
		&\leq    \liminf_{N \to \infty} \sum_{n=N}^{\infty} n^r e^{-(n-r)\delta}=0,
	\end{align*}
	where the second equality follows from \eqref{eq:sum}.	This implies that $\dim_{\mathrm{H}} F(r, B)\le t$, and hence $\dim_{\mathrm{H}} F(r, B) \leq s(r, B)$.
	Therefore, by \eqref{eq:ub}, we obtain $\dim_{\mathrm{H}} F(r, \psi) \leq s(r, B-\varepsilon)$.
	Letting $\varepsilon \to 0$ and applying the continuity of $s(r, B)$ from Proposition \ref{p:s(B)}, we conclude that
	\[
	\dim_{\mathrm{H}} F(r, \psi) \leq s(r, B). \qedhere
	\]
\end{proof}

To obtain the lower bound, we make use of a result from \cite{Hussain23}. We begin by introducing some notation.
For each $0 \leq i \leq r-1$,
let $\bm \alpha=(A_0, A_1, \ldots, A_{r-1})$, where $A_i >1$ is a real number.
Given an infinite set $\mathbb{A} \subset \mathbb{N}$, define the set
\[
G_r (\bm \alpha, \mathbb{A}):=\big \{x \in [0, 1): c_i A_i^n \leq a_{n+i}(x) < 2c_i A_i^n, 0 \leq i \leq r-1~\text{for i.m.}~n \in \mathbb{A} \big\}
\]
where $c_i \in \mathbb{R}^{+}$.
We denote $G_r(B, \mathbb{A}) :=G_r (\bm \alpha, \mathbb{A})$ when $\bm \alpha=(B, \ldots, B)$, and write $G_r(B)$ for $G_r(B, \mathbb{N})$.
For each $0 \leq i \leq r-1$, define
\[
\beta_{-1} =1, \quad \beta_i= A_0 \cdots A_i,
\]
and let
\[
d_i = \inf \big\{ s \geq 0: P(T, - s \log |T'|-s\log \beta_i +(1-s) \log \beta_{i-1}) \leq 0 \big\}.
\]

\begin{theorem}[{\cite[Theorem 1.1]{Hussain23}}] \label{t:dimension}
Let $\mathbb{A} \subset \mathbb{N}$ be an infinite set. Then we have
	\begin{equation}
	\dim_{\mathrm{H}} G_r(\bm \alpha, \mathbb{A}) =	\dim_{\mathrm{H}} G_r(\bm \alpha) = \min_{0 \leq i \leq r-1} d_i.
	\end{equation}
\end{theorem}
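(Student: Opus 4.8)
The plan is to prove the two inequalities $\dim_{\mathrm{H}} G_r(\bm \alpha,\mathbb A)\le \min_{0\le i\le r-1}d_i$ and $\dim_{\mathrm{H}} G_r(\bm \alpha,\mathbb A)\ge \min_{0\le i\le r-1}d_i$, and to extract the irrelevance of $\mathbb A$ along the way: the upper bound only gets easier as $\mathbb A$ grows, so it is enough to treat $\mathbb A=\mathbb N$, whereas the lower bound will be built along an extremely sparse subsequence of $\mathbb A$ and is therefore blind to the choice of the infinite set $\mathbb A$. Throughout I abbreviate $\mathcal P(s):=P(T,-s\log|T'|)=\lim_{m}\frac{1}{m}\log\sum_{\bm a\in\mathbb N^{m}}q_m^{-2s}(\bm a)$ and recall that $\mathcal P$ is strictly decreasing on $(1/2,\infty)$, tends to $+\infty$ as $s\downarrow 1/2$, and vanishes at $s=1$. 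Since $-s\log\beta_i+(1-s)\log\beta_{i-1}$ does not depend on $x$, the pressure appearing in the definition of $d_i$ equals $\mathcal P(s)-s\log\beta_i+(1-s)\log\beta_{i-1}$; as a function of $s$ this is strictly decreasing, blows up as $s\downarrow 1/2$, and equals $-\log\beta_i<0$ at $s=1$, so each $d_i$ is the unique $s\in(1/2,1)$ solving $\mathcal P(s)=s\log\beta_i-(1-s)\log\beta_{i-1}$.

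\emph{Upper bound.} Write $G_r(\bm \alpha,\mathbb A)=\bigcap_{N\ge 1}\bigcup_{n\ge N,\,n\in\mathbb A}E_n$ with $E_n=\{x\in[0,1):c_jA_j^{n}\le a_{n+j}(x)<2c_jA_j^{n},\ 0\le j\le r-1\}$, and fix a minimiser $i^{*}$ of $i\mapsto d_i$. Keeping only the window constraints at levels $0,\dots,i^{*}$, the set $E_n$ is contained in a disjoint union of intervals: having fixed the free digits $a_1,\dots,a_{n-1}$ and the digits $a_n,\dots,a_{n+i^{*}-1}$ inside their dyadic ranges, the remaining constraint $c_{i^{*}}A_{i^{*}}^{n}\le a_{n+i^{*}}<2c_{i^{*}}A_{i^{*}}^{n}$ cuts out of the cylinder $I_{n+i^{*}-1}(a_1,\dots,a_{n+i^{*}-1})$ a block of $\asymp A_{i^{*}}^{n}$ consecutive order-$(n+i^{*})$ cylinders, that is, an interval of length $\asymp q_{n+i^{*}-1}^{-2}(a_1,\dots,a_{n+i^{*}-1})\,A_{i^{*}}^{-n}$. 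Using items (2) and (3) of Proposition \ref{cylinder} to factor $q_{n+i^{*}-1}\asymp q_{n-1}\prod_{j=0}^{i^{*}-1}a_{n+j}$, together with $\sum_{c_jA_j^{n}\le a<2c_jA_j^{n}}a^{-2s}\lesssim A_j^{n(1-2s)}$ for $s>1/2$, the $s$-dimensional sum of this cover of $E_n$ is comparable to
\[
A_{i^{*}}^{-ns}\Big(\sum_{\bm a\in\mathbb N^{n-1}}q_{n-1}^{-2s}(\bm a)\Big)\prod_{j=0}^{i^{*}-1}\Big(\sum_{c_jA_j^{n}\le a<2c_jA_j^{n}}a^{-2s}\Big)=\exp\!\Big(n\big[\mathcal P(s)-s\log\beta_{i^{*}}+(1-s)\log\beta_{i^{*}-1}\big]+o(n)\Big),
\]
where I used $\sum_{j=0}^{i^{*}-1}\log A_j=\log\beta_{i^{*}-1}$ and $\log A_{i^{*}}=\log\beta_{i^{*}}-\log\beta_{i^{*}-1}$. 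For $s>d_{i^{*}}$ the bracketed exponent is negative, so summing over $n\ge N$ and letting $N\to\infty$ (the covering intervals having diameter $\le A_{i^{*}}^{-n}\to 0$) forces $\mathcal H^{s}(G_r(\bm \alpha,\mathbb A))=0$; hence $\dim_{\mathrm{H}} G_r(\bm \alpha,\mathbb A)\le d_{i^{*}}=\min_i d_i$.

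\emph{Lower bound.} Here I would fix a lacunary sequence $n_1<n_2<\cdots$ inside $\mathbb A$ with $n_{k+1}/n_k\to\infty$, a large parameter $M$, and build a Cantor subset $\mathcal K\subset G_r(\bm \alpha,\mathbb A)$: at the free positions $m\notin\bigcup_k\{n_k,\dots,n_k+r-1\}$ let $a_m$ range over $\{1,\dots,M\}$, while at the window positions $m=n_k+j$ let $a_m$ range over its full dyadic interval $[c_jA_j^{n_k},2c_jA_j^{n_k})$, retaining all $\asymp A_j^{n_k}$ choices. One then equips $\mathcal K$ with a mass distribution $\mu$ that on the free stretches follows the equilibrium state for the potential $-s_0\log|T'|$ on the $M$-restricted system, where $s_0:=\min_i d_i$, and that across each window spreads its mass among the prescribed large digits with a calibrated choice of weights so that the local exponent $\log\mu(B(x,\rho))/\log\rho$ at the intermediate scales $\rho$ lying inside or just past the window stays $\ge s_0$. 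The point is that a pure run of consecutive large digits has local dimension $1/2\le s_0$, so such a run, occurring only along $(n_k)$, cannot drive the lower density below $s_0$; and the role of the index $i$ is that when exactly the first $i+1$ prescribed digits $a_{n_k},\dots,a_{n_k+i}$ have been laid down the binding constraint is governed by $d_i$, so the worst-case requirement over $0\le i\le r-1$ is precisely $\min_i d_i$. Using items (2) and (3) of Proposition \ref{cylinder}, standard Gibbs estimates on the free stretches, and the lacunarity to make $n_k$ dominate all earlier scales, one concludes $\liminf_{\rho\to 0}\log\mu(B(x,\rho))/\log\rho\ge s_0$ for $\mu$-almost every $x$, and the mass distribution principle (see \cite{Falconer2013}) gives $\dim_{\mathrm{H}} G_r(\bm \alpha,\mathbb A)\ge s_0$. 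Alternatively one may derive the lower bound from a general criterion for the Hausdorff dimension of a limsup set built from cylinder families, in the spirit of \cite{Luczak1997, Feng1997}, applied to sub-events of $E_{n_k}$ tuned to the scale $n_k+i^{*}$.

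\emph{Main obstacle.} The real work is the lower bound, and within it the scale-uniform control of the local dimension of $\mu$ near the prescribed large digits — not merely at the window endpoints but at every intermediate scale down to which a cylinder has been split into the $\asymp A_j^{n_k}$ children carrying a prescribed digit. One must choose the mass distribution across a window delicately enough that this local exponent is pinned at $s_0$, and then see why minimising the resulting constraint over the window-truncation index $i$ (rather than always using all $r$ digits, which would suggest $d_{r-1}$) reproduces $\min_i d_i$; this is exactly the combinatorial–thermodynamic heart of the statement.
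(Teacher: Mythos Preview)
The paper does not prove this theorem from scratch: it cites \cite{Hussain23} for the equality $\dim_{\mathrm H}G_r(\bm\alpha)=\min_i d_i$ and only adds the remark that the extension to an arbitrary infinite index set $\mathbb A$ follows by restricting the Cantor-set construction in the lower bound to indices taken from $\mathbb A$, pointing to \cite[Corollary~3.3]{Wang2008} for how such a modification is carried out. Your proposal is therefore far more ambitious than what the paper actually does, and your overall architecture --- a natural $\limsup$ cover for the upper bound, and a Cantor subset equipped with a mass distribution along a sparse subsequence of $\mathbb A$ for the lower bound --- is precisely the route of \cite{Hussain23} itself.

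Your upper-bound computation is correct. The lower bound, however, contains a genuine gap, which you yourself flag as the ``main obstacle''. The sentence ``a pure run of consecutive large digits has local dimension $1/2\le s_0$, so such a run \dots\ cannot drive the lower density below $s_0$'' is argued in the wrong direction: a scale-window through which the local H\"older exponent is only $1/2$ \emph{would} drag the $\liminf$ below $s_0$ whenever $s_0>1/2$, and you yourself established in your opening paragraph that every $d_i$ lies in $(1/2,1)$. The remedy is not that the windows are sparse along $(n_k)$ --- each window spans a range of scales comparable to $\beta_{r-1}^{n_k}$, which dominates all preceding scales --- but rather that the mass across the window must be distributed so that at every intermediate truncation level $i$ the accumulated mass--length balance is governed by the inequality $\mathcal P(s_0)\ge s_0\log\beta_i-(1-s_0)\log\beta_{i-1}$; it is exactly the hypothesis $s_0=\min_{0\le i\le r-1} d_i$ that makes all $r$ of these inequalities hold simultaneously. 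Until that scale-by-scale verification is written out (or replaced by a direct appeal to the construction in \cite{Hussain23}, as the paper does), the lower bound remains a plan rather than a proof.
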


\begin{proof}
In fact, Hussain and Shulga \cite{Hussain23} established only the second equality, but the first can be obtained by slightly modifying their construction of the subset (see, for example, \cite[Corollary 3.3]{Wang2008} for how such a modification can be carried out).
\end{proof}

In order to align with our framework, we set $A_i =B$ and $c_i =B^{r-1}$ for $0 \leq i \leq r-1$.
Under this choice, the set $G_r(B, \mathbb{A})$ and $d_i$ can be rewritten as follows:
\begin{align*}
	G_r (B, \mathbb{A}): &= \left\{x \in [0, 1):  B^{n+r-1} \leq a_{n+i}(x) < 2 B^{n+r-1}, 0 \leq i \leq r~\text{for i.m.}~n \in \mathbb{A} \right\} \\
	&=\big\{x \in [0, 1):  B^{n} \leq a_{n-r+i}(x) < 2 B^{n}, 1 \leq i \leq r~\text{for i.m.}~n  \in \mathbb{A} \big\}
\end{align*}
and
\begin{align*}
d_i &= \inf \big\{ s \geq 0: P(T, -s\log |T'|-s(i+1) \log B+i(1-s)\log B) \leq 0\big\}\\
	&=\inf \big\{ s \geq 0: P(T, -s\log |T'|-(s+(2s-1)i ) \log B) \leq 0\big\}.
\end{align*}
Note that the potential function $-s\log |T'|-(s+(2s-1)i ) \log B$ is decreasing with respect to $i$ since $s >1/2$ (see Proposition \ref{p:s(B)}).
Hence, the minimum of $d_i$ is attained at $i=r-1$.
By Theorem \ref{t:dimension}, it follows that
\begin{align*}
	\dim_{\mathrm{H}} G_r (B, \mathbb{A})&= \min_{0 \leq i \leq r-1} d_i = d_{r-1}\\
	&= \inf \big\{ s \geq 0: P(T, -s \log |T'|-(s+(2s-1)(r-1))\log B) \leq 0 \big\},
\end{align*}
which shows that $\dim_{\mathrm{H}} G_r(B, \mathbb{A}) =s(r, B)$.

We now proceed to establish the lower bound for the Hausdorff dimension of the set $F(r, \psi)$.
Recall that
\[
\log B = \liminf_{n \to \infty} \frac{\log \psi(n)}{n}.
\]
Then, for any $\varepsilon >0$,
the inequality $\psi(n) \leq (B+\varepsilon)^n$ holds for infinitely many $n \in \mathbb{N}$.
Denote
\[
\mathbb{A}: =\{ n \in \mathbb{N}: \psi(n) \leq (B +\varepsilon)^n\},
\]
which is clearly an infinite set.

It is obvious that $G_r (B+\varepsilon, \mathbb{A})  \subset F(r, \psi)$, and then
\begin{align*}
\dim_{\mathrm{H}} F(r, \psi) 	\geq \dim_{\mathrm{H}} G_r (B+\varepsilon, \mathbb{A})
	= s(r, B+\varepsilon).
\end{align*}
Taking the limit as $\varepsilon \to 0$, we conclude that
\[
\dim_{\mathrm{H}} F(r, \psi) \geq  s(r, B).
\]

\subsection{The Case $B=1$}\

For any $\delta >1$, let $\psi_1(n) = \delta^n \cdot \psi(n)$.
Then we have
\[
\liminf_{n \to \infty} \frac{\log \psi_1(n)}{n} =\log \delta >0
\]
and
\[
F(r, \psi_1) \subset F(r, \psi).
\]
By the case $1 < B < \infty$, we know that
\[
\dim_{\mathrm{H}} F(r, \psi_1) = s(r, \delta)
\]
where $s(r, \delta)$ is defined in Section 2.3.
Let $\delta \to 1$ and by Proposition \ref{p:s(B)}, we obtain
\[
  \dim_{\mathrm{H}} F(r, \psi_1) = 1.
\]
Hence,
\[
\dim_{\mathrm{H}} F(r, \psi) \geq   \dim_{\mathrm{H}} F(r, \psi_1) = 1.
\]

\subsection{The Case $B=\infty$}\

In this case, we will make use of the following result, due to {\L}uczak \cite{Luczak1997} and Feng, Wu, Liang, and Tseng \cite{Feng1997}.

\begin{theorem}[\cite{Feng1997, Luczak1997}] \label{thm:dim}
	For any $b, c > 1$, both of the following two sets
	\[
	\big\{ x \in [0,1): a_n(x) \geq c^{b^n} \text{for~i.m.}~ n \big\}
	\]
	and
	\[
	\big\{ x \in [0,1): a_n(x) \geq c^{b^n} \text{for all}~  n  \gg 1 \big\}
	\]
	have the same Hausdorff dimension $\dfrac{1}{1+b}$.
\end{theorem}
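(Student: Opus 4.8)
The plan is to prove both equalities at once by sandwiching the common value $\frac{1}{1+b}$ between an upper bound for the larger (limsup) set and a lower bound for the smaller (eventual) set. Write $\mathcal E=\{x\in[0,1): a_n(x)\ge c^{b^n}~\text{for i.m.}~n\}$ and $\mathcal F=\{x\in[0,1): a_n(x)\ge c^{b^n}~\text{for all}~n\gg1\}$. Since $\mathcal F\subset\mathcal E$ we have $\dim_{\mathrm H}\mathcal F\le\dim_{\mathrm H}\mathcal E$, so it is enough to show $\dim_{\mathrm H}\mathcal E\le\frac{1}{1+b}$ and $\dim_{\mathrm H}\mathcal F\ge\frac{1}{1+b}$; together these force both dimensions to equal $\frac{1}{1+b}$.

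For the lower bound I would build an explicit Cantor subset of $\mathcal F$ by restricting $a_n\in[c^{b^n},2c^{b^n})$ at every level, which leaves $\asymp c^{b^n}$ admissible choices for the $n$th digit, and then place the natural measure $\mu$ assigning equal mass to the surviving cylinders of each order. The relevant scales are $\log q_n\asymp\frac{b^{n+1}}{b-1}\log c$, so an order-$n$ cylinder has length $\asymp q_n^{-2}$ and mass $\log\mu_n\asymp-\frac{b^{n+1}}{b-1}\log c$. The key geometric point is that the $\asymp c^{b^n}$ children of an order-$(n-1)$ cylinder do not spread over the whole parent but cluster inside a sub-interval of length $L_n\asymp q_{n-1}^{-2}c^{-b^n}$. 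Applying the mass distribution principle I would estimate $\mu(B(x,\rho))$ across all scales $\rho$; the binding scale turns out to be $\rho\asymp L_n$, where a ball captures one parent's entire progeny so that $\mu(B(x,L_n))\asymp\mu_{n-1}$ and $\frac{\log\mu_{n-1}}{\log L_n}\to\frac{1}{1+b}$, whereas the finer scale $\rho\asymp q_n^{-2}$ only produces the exponent $\frac12$. Since $\frac{1}{1+b}$ is the smallest exponent over all scales, this yields $\dim_{\mathrm H}\mathcal F\ge\frac{1}{1+b}$.

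For the upper bound I would cover $\mathcal E$ at this same cluster scale. For a fixed prefix $\bm a\in\mathbb N^{n-1}$ the tail set $\{x\in I_{n-1}(\bm a): a_n\ge c^{b^n}\}$ is an interval $J_{n-1}(\bm a)$ of length $\asymp q_{n-1}(\bm a)^{-2}c^{-b^n}$ (cf. \eqref{eq:length}). Covering each $\{a_n\ge c^{b^n}\}$ in the representation $\mathcal E=\bigcap_N\bigcup_{n\ge N}\{a_n\ge c^{b^n}\}$ by these clusters, the $s$-volume contributed at level $n$ is $\asymp c^{-sb^n}\sum_{\bm a}q_{n-1}(\bm a)^{-2s}$. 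Here lies the subtlety: $\sum_{\bm a\in\mathbb N^{n-1}}q_{n-1}^{-2s}$ diverges for $s\le\frac12$, so the naive cover only delivers $\dim\le\frac12$. The remedy is to organise the cover so that the partial quotients preceding the large one are bounded by $c^{b^k}$; using $q_{n-1}\ge\prod_{k<n}a_k$ together with $\sum_{a\le M}a^{-2s}\asymp M^{1-2s}$ for $s<\frac12$ gives $\sum q_{n-1}^{-2s}\lesssim\prod_{k<n}c^{(1-2s)b^k}=c^{(1-2s)\frac{b^n}{b-1}(1+o(1))}$. The level-$n$ $s$-volume is then $\asymp c^{\,b^n(1+o(1))\frac{1-s(1+b)}{b-1}}$, which is summable over $n$ exactly when $s>\frac{1}{1+b}$, giving $\dim_{\mathrm H}\mathcal E\le\frac{1}{1+b}$.

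The hard part will be making the digit restriction in the upper bound legitimate for the limsup set $\mathcal E$. One cannot simply assign each point to its first large partial quotient, since the corresponding single-scale set has positive Lebesgue measure and hence full dimension, which would destroy the estimate. Instead the cover must record all large partial quotients up to level $n$ simultaneously and sum over their admissible configurations — this is precisely the $\Gamma_n$-type covering already used in the case $1<B<\infty$ of Theorem \ref{Hausdorff}, now specialised to $\psi(n)=c^{b^n}$ — and the crux is to bound this configuration sum uniformly in $n$. By contrast, once the cluster scale $L_n$ has been identified, the lower bound is a routine, if delicate, mass-distribution computation.
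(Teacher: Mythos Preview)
The paper does not prove this theorem at all: it is quoted from \L uczak (1997) and Feng--Wu--Liang--Tseng (1997) and then used as a black box in the case $B=\infty$ of Theorem~\ref{Hausdorff}. So there is no ``paper's own proof'' to compare against; you are attempting strictly more than the paper does.

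Your lower-bound sketch for $\mathcal F$ is correct and is essentially the argument of Feng--Wu--Liang--Tseng: restrict $a_n\in[c^{b^n},2c^{b^n})$, put the uniform measure on the resulting Cantor set, and run the mass distribution principle at the cluster scale $L_n\asymp q_{n-1}^{-2}c^{-b^n}$. The identification of $L_n$ as the binding scale and of $\tfrac{1}{1+b}$ as the minimal local exponent is right.

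The upper bound for $\mathcal E$, however, has a real gap. Your headline calculation --- restrict $a_k<c^{b^k}$ for all $k<n$ and use $\sum_{a<M}a^{-2s}\asymp M^{1-2s}$ --- is the $S=\emptyset$ configuration only; you correctly note that one must sum over all configurations $S\subset\{1,\dots,n-1\}$ of earlier large digits. The problem is your proposed mechanism for doing so. You invoke the $\Gamma_n$-type covering from the paper's proof of the case $1<B<\infty$, but that argument works because there $s(r,B)>\tfrac12$, so $\sum_{a\ge M}a^{-2s}\asymp M^{1-2s}$ converges and the large digits can simply be summed out via Proposition~\ref{cylinder}(3). Here $s=\tfrac{1}{1+b}<\tfrac12$, so $\sum_{a\ge M}a^{-2s}=\infty$ and that device fails outright for every nonempty $S$. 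A different estimate is needed: for a fixed configuration $S$ one can use the crude bound
\[
\sum_{\bm a:\ \text{config }S}|J_{n-1}(\bm a)|^{s}\ \le\ \Big(\max_{\bm a}|J_{n-1}(\bm a)|\Big)^{s-1}\cdot\mathcal L\Big(\bigcup_{\bm a}J_{n-1}(\bm a)\Big)
\ \lesssim\ c^{(1-2s)\sum_{m\in S}b^{m}-sb^{n}},
\]
since $q_{n-1}\ge c^{\sum_{m\in S}b^{m}}$ forces $\max|J|\le c^{-2\sum_{m\in S}b^{m}-b^{n}}$ while the union has Lebesgue measure $\asymp c^{-\sum_{m\in S}b^{m}-b^{n}}$; summing over $S$ then reproduces your exponent $b^{n}\frac{1-s(1+b)}{b-1}$ and gives covers of $\bigcup_{n\ge N}E_n\supset\mathcal E$ with $s$-volume tending to $0$. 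This is \L uczak's idea, and it is not the $\Gamma_n$ bookkeeping you point to. Without it your upper-bound argument stalls at $\dim_{\mathrm H}\mathcal E\le\tfrac12$.
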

To apply this result to our setting, we recall that
\[
~\log b =  \liminf_{n \to \infty} \frac{\log \log  \psi(n) }{n}.
\]
Based on the value of $b$, we divide the discussion into three cases.

{\bf Case 1:} $b=1$.
For any $A$ with $1 <A< \infty$, we have $\psi(n) \geq \psi_A(n):=A^n$ since $B= \infty$.
Therefore, it is clear that $F(r, \psi) \subset F(r, A)$ and from the case $1<B< \infty$, we know that
\[
\dim_{\mathrm{H}} F(r, A)=s(r, A).
\]
This implies that
\[
\dim_{\mathrm{H}} F(r, \psi) \leq s(r,  A).
\]
Letting $A \to \infty$, and using Proposition \ref{p:s(B)} (2), we obtain
\[
\dim_{\mathrm{H}} F(r, \psi) \leq \frac{1}{2}.
\]

On the other hand, for any $d>1$,
by the definition of $b$,  one has
\[
\psi(n) \leq e^{d^n} ~\text{for~infinitely~many}~n.
\]
Therefore,
\[
\{x \in [0, 1): a_n(x) \geq e^{d
	^{n}}~\text{for~all}~n \gg 1\} \subset F(r, \psi),
\]
and by Theorem \ref{thm:dim},
\[
\dim_{\mathrm{H}} F(r, \psi) \geq \dfrac{1}{1+d},
\]
Letting $d \to 1$, we obtain $\dim_{\mathrm{H}} F(r, \psi) \geq \dfrac{1}{2}$.

{\bf Case 2:} $1 < b < \infty$.
For any $1<c < b$, by the definition of $b$, we have
\[
\psi(n) \geq  e^{c^n}~\text{ for all } n \gg 1.
\]
Therefore, by \eqref{eq:limsup}, we have
\[
F(r, \psi ) \subset \{x \in [0, 1): a_n(x) \geq e^{c^n}~\text{for i.m.}~ n \in \mathbb{N}\}
\]
which, together with Theorem \ref{thm:dim}, implies
\[
\dim_{\mathrm{H}} F(r, \psi) \leq \dfrac{1}{1+c}.
\]
Letting $c \to b$, we obtain
\[
\dim_{\mathrm{H}} F(r, \psi) \leq \dfrac{1}{1+b}.
\]

On the other hand, for any $d > b$, by the definition of $b$, one has
\[
\psi(n) \leq e^{d^n}~\text{for~infinitely~many}~n.
\]
Hence,
\[
\{x \in [0, 1): a_n(x) \geq e^{d^{n}}~\text{for~all}~n \gg 1\} \subset F(r, \psi),
\]
 and by Theorem \ref{thm:dim},
\[
\dim_{\mathrm{H}} F(r, \psi) \geq \dfrac{1}{1+d}.
\]
Letting $d \to b$ yields
\[
\dim_{\mathrm{H}} F(r, \psi) \geq \dfrac{1}{1+b}.
\]

{\bf Case 3:} $b =\infty$.
For any $1<c < b$, by the definition of $b$, we have
\[
\psi(n) \geq  e^{c^n}~\text{ for all } n \gg 1.
\]
Therefore,
\[
F(r, \psi ) \subset \{x \in [0, 1): a_n(x) \geq e^{c^n}~\text{for i.m.}~ n \in \mathbb{N}\}.
\]
Together with Theorem \ref{thm:dim}, this implies
\[
\dim_{\mathrm{H}} F(r, \psi) \leq \dfrac{1}{1+c}.
\]
Letting $c \to \infty$, we obtain
\[
\dim_{\mathrm{H}} F(r, \psi) =0.
\]

{\bf Acknowledgement}  The work was supported by the Fundamental Research Funds for the Central Universities (No. SWU-KQ24025).

\bibliographystyle{amsplain}

\end{document}